\theoremstyle{plain}
\newtheorem{theorem}{Theorem}[section]
\newtheorem{thm}[theorem]{Theorem} 
\crefname{thm}{Thm.}{Thms.} \Crefname{thm}{Theorem}{Theorems}
\newtheorem{corollary}[theorem]{Corollary}
\Crefname{corollary}{Cor.}{Cors.}
\newtheorem{prop}[theorem]{Proposition}
\newtheorem{lem}[theorem]{Lemma}
\newtheorem{conj}[theorem]{Conjecture}
\newtheorem{ques}[theorem]{Question}
\theoremstyle{remark}
\newtheorem{rem}[theorem]{Remark}
\theoremstyle{definition}
\newtheorem{defn}[theorem]{Definition}
\newtheorem{exmp}[theorem]{Example}
\crefname{prop}{Prop.}{Props.} \Crefname{prop}{Proposition}{Propositions}
\crefname{cor}{Cor.}{Cors.} \Crefname{cor}{Corollary}{Corollaries}
\crefname{rem}{Rem.}{Rems.} \Crefname{rem}{Remark}{Remarks}
\crefname{defn}{Def.}{Defs.} \Crefname{defn}{Definition}{Definitions}
\crefname{exmp}{Exm.}{Exms.} \Crefname{exmp}{Example}{Examples}
\crefname{ques}{Ques.}{Ques.} \Crefname{ques}{Question}{Questions}
\newcommand{\N}{\mathbb{N}}
\DeclareMathOperator{\adj}{adj}
\newcommand{\ds}[1]{\displaystyle{#1}}
\newcommand{\abs}[1]{\left|{#1}\right|}
\renewcommand{\scr}[1]{\mathscr{#1}}
\newcommand{\vs}[1]{\left[#1\right]}
\newcommand{\esym}[2]{\left\langle#1;#2\right\rangle}
\newcommand{\set}[1]{\left\{#1\right\}}
\newcommand{\floor}[1]{\left\lfloor#1\right\rfloor}
\newcommand{\nth}{n^\text{th}}
\newcommand{\lolam}{\underline{\lambda}}
\newcommand{\hilam}{\overline{\lambda}}
\newcommand{\induced}[1]{\left\langle #1\right\rangle}
\newcommand{\gam}{\gamma}		
\newcommand{\sig}{\sigma}		
\newcommand{\bigmonouc}{\scr{G}_{4,4}\cup\scr{G}_{3+,5}\cup\scr{G}_{3,6}}
\newcommand{\ft}{\mathbf{f}}
\newcommand{\wft}[1]{\mathbf{w}_{#1}}
\newcommand{\bft}[1]{\mathbf{b}_{#1}}
\newcommand{\nft}[1]{\mathbf{n}_{#1}}
\title{Growth of Face-Homogeneous Tessellations}
\author{Stephen J.~Graves}
\address{Department of Mathematics\\
University of Texas at Tyler\\
Tyler, TX 75799}
\thanks{Much of the material in this work comes from the doctoral dissertation \cite{Graves09} of the first author written under the supervision of the second author.}
\email{sgraves@uttyler.edu}
\author{Mark E.~Watkins}
\address{Department of Mathematics\\
Syracuse University\\
Syracuse, NY 13244-1150}
\thanks{The second author was partially supported by a grant from the Simons Foundation (\#209803 to Mark E. Watkins).}
\email{mewatkin@syr.edu}
\date{29 June 2017}
\subjclass[2010]{Primary 05B45, 05C63; Secondary 05C10, 05C12}
\keywords{face-homogeneous, tessellation, growth rate, valence sequence, exponential growth, transition matrix, Bilinski diagram, hyperbolic plane}
\begin{document}
\maketitle

\begin{abstract}\sloppypar
A tessellation of the plane is \emph{face-homogeneous} if for some integer $k\geq3$ there exists a cyclic sequence $\sigma=[p_0,p_1,\ldots,p_{k-1}]$ of integers $\geq3$ such that, for every face $f$ of the tessellation, the valences of the vertices incident with $f$ are given by the terms of $\sigma$ in either clockwise or counter-clockwise order.  When a given cyclic sequence $\sigma$ is realizable in this way, it may determine a unique tessellation (up to isomorphism), in which case $\sigma$ is called \emph{monomorphic}, or it may be the valence sequence of two or more non-isomorphic tessellations (\emph{polymorphic}).    A tessellation whose faces are uniformly bounded in the hyperbolic plane but not uniformly bounded in the Euclidean plane is called a {\em hyperbolic tessellation}.   Such tessellations are well-known to have exponential growth.  We seek the face-homogeneous hyperbolic tessellation(s) of slowest growth and show that the least growth rate of such monomorphic tessellations is the ``golden mean,'' $\gamma=(1+\sqrt{5})/2$, attained by the sequences $[4,6,14]$ and $[3,4,7,4]$.  A polymorphic sequence may yield non-isomorphic tessellations with different growth rates.  However, all such tessellations found thus far grow at rates greater than $\gamma$.
\end{abstract}
%\end{frontmatter}
%%%%%

\setcounter{section}{-1}
\section{Introduction\label{chap:Intro}}
\nocite{Graves09}
It has long been known that there are finitely many homogeneous tessellations of the Euclidean plane; they all have quadratic growth.  However, in the hyperbolic plane, for various definitions of ``homogeneity,'' infinitely many homogeneous tessellations are realizable, and their growth, if not quadratic, is always exponential.  Presently we will give a rigorous definition of growth rate, but for now one should think of this parameter intuitively as the asymptotic rate at which additional tiles (or faces) accrue with respect to some chosen center of a tessellation.  In this schema, all  Euclidean tessellations have growth rate equal to 1, and hyperbolic tessellations have growth rate strictly greater than 1.  The first author has shown by construction in \cite{Graves11} that, given any $\epsilon>0$, there exists a tessellation of the hyperbolic plane with growth rate exactly $1+\epsilon$.  In general, these latter tessellations have few if any combinatorial or geometric symmetries.  The question then becomes one of determining the growth rates of hyperbolic tessellations when some sort of homogeneity is imposed.  In particular, subject to a homogeneity constraint, how small can the gap be between quadratic and exponential growth?

\sloppypar{In a seminal work \cite{GrunShep87}, Gr\"{u}nbaum and Shephard defined a graph to be \emph{edge-homogeneous with edge-symbol $\langle p,q;k,\ell\rangle$} if every edge is incident with vertices of valence $p$ and $q$ and faces of covalence $k$ and $\ell$.  They proved that the parameters of an edge-symbol uniquely  determine an edge-homogeneous tessellation up to isomorphism.  }

The notion of homogeneity was extended by Moran \cite{Moran97}.  She defined a tessellation to be \emph{face-homogeneous with valence sequence $[p_0,\ldots,p_{k-1}]$} if every face is a $k$-gon incident with vertices of valences $p_0,\ldots,p_{k-1}$ in either clockwise or counter-clockwise consecutive order. Unfortunately, no uniqueness property analogous to the Gr\"{u}nbaum-Shephard result holds in general for face-homogeneous tessellations. 

Moran's work on growth rates of face-homogeneous tessellations led the authors (together with T.\,Pisanski) to return to edge-homogeneous tessellations and conclusively determine their growth rates.  In \cite{GPW} they determined the growth rate of any edge-homogeneous tessellation as a function of its edge-symbol and proved that the least growth rate for an exponentially-growing, edge-homogeneous tessellation is $\frac12(3+\sqrt{5})\approx2.618$.

\sloppypar{The goal of this article is to obtain an analogous result for face-homogeneous tessellations.  Our main result is that if a face-homogeneous tessellation with exponential growth is determined up to isomorphism by its valence sequence, then its growth rate is at least $\frac12(1+\sqrt5)$, namely the ``golden mean.'' Moreover,  we determine exactly the valence sequences for which this golden mean is realized.  A significant by-product of our investigation is an abundance of machinery for computing the growth rates of many classes of face-homogeneous planar tessellations.}

 \Cref{chap:prelims} consists of six sections.  Following some general definitions concerning infinite graphs in the plane, we present  (\Cref{sec:Bilinski}) a system for labeling sets of vertices and sets of faces of a tessellation; such a labeling is called a ``Bilinski diagram.''  \Cref{sec:FaceHom} presents the notion of face-homogeneity and associated notation.  Polynomial and exponential growth, defined on the one hand with respect to the standard graph-theoretic metric, and on the other hand with respect to the notion of angle excess, appear in \Cref{sec:poly/exp}.   \Cref{sec:growth-form} presents a rigorous theoretical treatment of growth rate with respect to regional distance in a Bilinski diagram.  \Cref{sec:edge-hom} concludes the Preliminaries  with a review of the completely resolved case of edge-homogeneous tessellations, summarizing results from \cite{GrunShep87} and \cite{GPW}. 
 
In \Cref{sec:Accretion} we lay out our method for filling in the formulas obtained in \Cref{sec:growth-form} while introducing the notion of a transition matrix.  Analogous to a Markov process, this matrix encodes for given $n\geq 1$ how many faces of each possible configuration are ``begotten'' at regional distance $n+1$ from the root of a Bilinski diagram by a face at regional distance $n$ from the root.  The maximum modulus of the eigenvalues of the transition matrix are key to the growth rate of $T$.

\Cref{sec:MonoUC} applies the machinery of \Cref{sec:Accretion} to the significant class of valence sequences that are \emph{monomorphic}, i.e., that are uniquely realizable as a face-homogeneous tessellation and whose Bilinski diagrams are in a certain sense well-behaved, called \emph{uniformly concentric}.  It  is shown in \Cref{thm:GrowthOrder} that for such valence sequences, the partial order defined in \Cref{sec:FaceHom} is preserved by their growth rates. The six classes of monomorphic sequences of lengths 3, 4, and 5 whose Bilinski diagrams are not uniformly concentric are identified in \Cref{sec:MonoNCVS}, where it is proved that they are indeed monomorphic.  The exhaustive proof that this list is complete is contained in the Appendix \cite{appx}. Finally, we present in \Cref{sec:MainResult} the main result of the paper, that the least growth rate of a face-homogeneous tessellation with monomorphic valence sequence is the golden mean $\frac12(1+\sqrt{5})$.

Those valence sequences (described as  \emph{polymorphic}) which admit multiple non-isomorphic tessellations are alive and well in \Cref{sec:PolymorphicVS}.  A general sufficient condition for polymorphism is given.  The difficulties posed by polymorphism are illustrated by an example; the polymorphic sequence $[4,4,6,8]$ is considered in some depth in \Cref{appx_a}.  In particular, we see by this example that two different tessellations having the same (polymorphic) valence sequence may well have different growth rates. We conclude the chapter with some conjectures in \Cref{sec:Conjectures}.

The appendix \cite{appx} alluded to above is to be found with this article on the arXiv. All references therein are to results in the present paper.
\medskip

%%%% PART 1: PRELIMINARIES

\section{Preliminaries\label{chap:prelims}}

%%%%%%%%%%%%%%%%%%%%%%
\subsection{Tessellations}
For a graph $\Gamma$, the symbol $V(\Gamma)$ denotes the vertex set of $\Gamma$. If $M$ is a planar embedding of $\Gamma$, we call $M$ a \emph{plane map} and denote by $F(M)$ the set of faces of $M$. 

A graph $\Gamma$ is \emph{infinite} if its vertex set $V(\Gamma)$ is infinite; $\Gamma$ is \emph{locally finite} if every vertex has finite valence.  A graph is \emph{$3$-connected} if there is no set of fewer than three vertices whose removal disconnects the graph.  It is well known that if the underlying graph $\Gamma$ of a plane map $M$ is 3-connected (as is generally the case in this work), then every automorphism of $\Gamma$ induces a permutation of $F(M)$ that preserves face-vertex incidence and can be extended to a homeomorphism of the plane.  Thus we  tend to abuse language and speak of ``the faces of $\Gamma$.''  When a plane map is 3-connected, every edge is incident with exactly two distinct faces.  In this case, the number of edges (and hence of vertices) incident with a given face is its \emph{covalence}.  A map is \emph{locally cofinite} if the covalence of every face is finite.

An \emph{accumulation point} of an infinite plane map $M$ is a point $x$ in the plane such that every open disk of positive radius (in either the Euclidean or hyperbolic metric) containing $x$ intersects infinitely many map objects, be they faces, edges, or vertices.  A map is 1-\emph{ended} when the deletion of any finite submap leaves exactly one infinite component. 

\begin{defn} A \emph{tessellation}\index{tessellation} is an infinite plane map 
that is $3$-connected, locally finite, locally cofinite, 1-ended, and also admits no accumulation point.
\end{defn}

In the terminology of Gr\"unbaum and Shepherd's exhaustive work \cite{TilesPats} on tilings of the plane, a tessellation $T$ is \emph{normal}\index{normal} if there is an embedding of $T$ in the plane and radii $0<r<R$ under a specific metric such that the boundary of each face lies within some annulus with inner radius $r$ and outer radius $R$.  A \emph{Euclidean tessellation} is tessellation that is normal with respect to the Euclidean metric, and a \emph{hyperbolic tessellation} is one that is normal with respect to the hyperbolic metric but not with respect the Euclidean metric.
\medskip

%%%%%%%%%%%%%%%%%%%%%%
\subsection{Bilinski diagrams}\label{sec:Bilinski}

A very useful tool for computing ``growth rate" is what we have
called a  \emph{Bilinski diagram}, because these diagrams were first used by Stanko Bilinski in his dissertation \cite{Bil48, Bil49}.

\begin{defn}\label{Bilinski}
Let $M$ be a map that is rooted at some vertex 
$x$.   Define a sequence of sets $\{U_n :  n \ge 0\}$  of vertices and a sequence of
sets  $\{F_n: n \ge 0\}$ of faces of $M$ inductively as follows.\\
\noindent$\bullet$   Let $U_0=\{x\}$ and let $F_0=\emptyset$.\\  
\noindent$\bullet$   For $n\geq1$, let $F_n$ denote the set of faces of $M$ not in $F_{n-1}$ that are incident with some vertex in  $U_{n-1}$.\\
\noindent$\bullet$  For $n\geq1$, let $U_n$ denote the set of vertices of $M$ not in $U_{n-1}$ that are incident with some face in  $F_n$.\\
The stratification of $M$ determined by the set-sequences $\{U_n\}$ and $\{F_n\}$
is called \emph{the  Bilinski diagram of $M$ rooted at $x$.} 
In a similar way one can define a Bilinski diagram of 
$M$ rooted at a face $f$. In this case $U_0 = \emptyset$ and $F_0 = \{f\}$.  Given a Bilinski diagram of $T$, the induced submap $\langle F_n\rangle$ of $T$ is its $n^\text{th}$ \emph{corona}.

A Bilinski diagram is \emph{concentric} if each subgraph $\langle U_n\rangle$ induced by $U_n$ ($n\geq1$) is a 
cycle; otherwise the Bilinski diagram is {\em non-concentric}.  If a plane map yields a concentric Bilinski diagram regardless of which vertex or face is designated as its root, then the map is \emph{uniformly concentric}; analogously a map which for every designated root yields a non-concentric Bilinski diagram is \emph{uniformly non-concentric}. 
\end{defn}

To answer the question as to which tessellations are uniformly concentric we state a sufficient condition and a necessary condition.  Let $\mathscr{G}_{a,b}$ denote the class of tessellations all of whose vertices have valence at least $a$ and all of whose faces have covalence at least $b$. Let $\mathscr{G}_{a+,b}$ be the subclass of $\mathscr{G}_{a,b}$ of tessellations with no adjacent $a$-valent vertices.

\begin{prop}[\cite{BruceWat04} Corollary 4.2; \cite{NieWat97} Theorem 3.2]\label{prop:UnifConc} Every tessellation $T\in\mathscr{G}_{3,6}\cup\mathscr{G}_{3+,5}\cup\mathscr{G}_{4,4}$ is uniformly concentric, and in every Bilinski diagram of $T$, for all $n\geq1$, every face in $F_n$ is incident with at most two edges in $\langle U_{n-1}\rangle$.  
\end{prop}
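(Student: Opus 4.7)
The plan is to prove both conclusions simultaneously by induction on $n$, with the strengthened inductive statement at level $n$: $\langle U_n \rangle$ is a cycle $C_n$ bounding a closed disk $D_n$ that contains the first $n$ coronas, and every face of $F_n$ meets $\langle U_{n-1}\rangle$ (interpreted as the root vertex $x$, or the boundary cycle of the root face, when $n=1$) in a path of at most two edges. The base case is immediate from $3$-connectedness and local cofiniteness of $T$: the link of the root is a cycle, and each face of $F_1$ meets the root in at most one vertex or a single edge.

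For the inductive step, assume $\langle U_{n-1}\rangle = C_{n-1}$ bounds a disk $D_{n-1}$ with the previous coronas inside. Every face $f\in F_n$ lies in the closure of the complement of $D_{n-1}$; since $C_{n-1}$ separates the plane and $\partial f$ is a simple cycle, the intersection $\partial f\cap C_{n-1}$ forms a single arc of consecutive edges along $\partial f$. I first establish the edge-sharing bound. Suppose toward a contradiction that some $f\in F_n$ shares $t\geq 3$ consecutive edges with $C_{n-1}$, producing $t-1\geq 2$ ``internal'' vertices of $C_{n-1}$ on $\partial f$. At each such internal vertex $v$, the faces on the inner side of $C_{n-1}$, whose incidences at $v$ are controlled by the previous-level edge bound, together with $f$ account for all but a small number of the face-incidences at $v$. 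Invoking the class-specific constraints, one forces either a forbidden adjacency of $3$-valent vertices (for $\mathscr{G}_{3+,5}$) or the creation of a face of covalence strictly below the class threshold (for $\mathscr{G}_{3,6}$ or $\mathscr{G}_{4,4}$), a contradiction.

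With the edge bound in hand, the claim that $\langle U_n\rangle$ is a cycle follows by tracking the outer boundary of $\bigcup F_n$. Each face of $F_n$ contributes at least $b-2\geq 2$ outer edges, where $b\in\{4,5,6\}$ is the relevant covalence lower bound, and consecutive faces of $F_n$ in the cyclic order inherited from $C_{n-1}$ share exactly the vertex at which their $C_{n-1}$-arcs abut. This stitches the outer edges into a closed walk; a repeated vertex in the walk would manufacture a face violating the newly proved edge bound or else violate the local valence/covalence constraints of the class. Since the argument is independent of which vertex or face is designated as the root, $T$ is uniformly concentric.

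The main obstacle is the local combinatorial analysis inside the inductive step, especially for $\mathscr{G}_{3+,5}$, where the prohibition on adjacent $3$-valent vertices is used in an essential way: without it, two adjacent $3$-valent vertices of $C_{n-1}$ could each have a single outer face, and if those outer faces coincided one would obtain precisely a face incident with three edges of $C_{n-1}$. The class $\mathscr{G}_{4,4}$ requires a dual style of argument since faces may be as small as quadrilaterals, while $\mathscr{G}_{3,6}$ is the most forgiving because the large covalence bound supplies ample outer edges. In each class the contradiction amounts to a local ``non-positive curvature'' inequality that fails when a face overreaches onto $C_{n-1}$.
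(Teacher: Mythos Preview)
The paper does not prove this proposition; it is quoted from \cite{BruceWat04} and \cite{NieWat97} without argument. So there is no in-paper proof to compare against, and your task was really to reconstruct the cited results.

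Your inductive scheme---strengthening the hypothesis to ``$\langle U_{n-1}\rangle$ is a cycle \emph{and} every face of $F_n$ meets it in at most two edges''---is the right architecture, and it is essentially how the cited papers proceed. But two steps in your sketch are not justified. First, the assertion that $\partial f\cap C_{n-1}$ is a single arc does \emph{not} follow merely from ``$C_{n-1}$ separates the plane and $\partial f$ is a simple cycle.'' Topologically, a face $f$ in the exterior of $D_{n-1}$ can meet $C_{n-1}$ in two disjoint subarcs, trapping a ``pocket'' of other $F_n$-faces between one arc of $\partial f$ and an arc of $C_{n-1}$; ruling this out already requires the class hypotheses and is part of the induction, not a preliminary to it. Second, your stitching argument for $\langle U_n\rangle$ being a \emph{simple} cycle asserts that ``a repeated vertex \dots\ would manufacture a face violating the newly proved edge bound or else violate the local valence/covalence constraints,'' but you have not shown this. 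The delicate point is that two non-consecutive faces of $F_n$ (in the cyclic order along $C_{n-1}$) might share a vertex of $U_n$, and excluding this is exactly where the careful case analysis in \cite{BruceWat04} and \cite{NieWat97} lives. Your paragraph about $\mathscr{G}_{3+,5}$ correctly identifies the mechanism (adjacent $3$-valent vertices on $C_{n-1}$ are the danger), but ``one forces either a forbidden adjacency \dots\ or the creation of a face of covalence strictly below the class threshold'' is an outline of what must be checked, not a proof.
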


\begin{prop}[\cite{BruceWat04} Theorem 5.1]\label{prop:UnifConcForbidden} If an infinite planar map admits any of the following configurations, then the map is not uniformly concentric:
\begin{enumerate}
\item a $3$-valent vertex incident with a $3$-covalent face;
\item a $4$-valent vertex incident with two nonadjacent $3$-covalent faces;
\item a $4$-covalent face incident with two nonadjacent $3$-valent vertices;
\item an edge incident with two $3$-valent vertices and two $4$-covalent faces;
\item an edge incident with two $4$-valent vertices and two $3$-covalent faces.
\end{enumerate}
\end{prop}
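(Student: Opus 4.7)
The plan is to prove each of the five implications separately by producing, for every listed configuration, a choice of root that yields a non-concentric Bilinski diagram. The general tool is the local structure theorem implicit in \cite{BruceWat04}: in a concentric Bilinski diagram, each vertex $v$ on the cycle $\langle U_n\rangle$ has its cyclically ordered incident edges split by its two ``cycle'' edges into a \emph{back wedge} whose edges enter $U_{n-1}$ and whose faces lie in $F_n$, and a \emph{forward wedge} whose edges enter $U_{n+1}$ and whose faces lie in $F_{n+1}$. Small valences and small covalences leave too little room for both wedges to exist, which is the source of every contradiction below.

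For configuration (1), let the 3-valent vertex $v$ be incident with the 3-covalent face $f=vab$, and let $c$ be the third neighbor of $v$. Rooting at $v$ gives $U_1=\{a,b,c\}$ and puts the edge $ab$ into $\langle U_1\rangle$ by virtue of $f$. For $\langle U_1\rangle$ to be a 3-cycle one would need both of the other faces at $v$ to be triangles as well; if not, $\langle U_1\rangle$ is already not a cycle. In the degenerate subcase where all three faces at $v$ are triangles (so $\{v,a,b,c\}$ induces $K_4$), I would push one level further and show that the local $K_4$ forces at least one vertex of $U_2$ to have fewer than two ``cycle'' edges, breaking the 2-regularity of $\langle U_2\rangle$.

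Configurations (2) and (3) are handled by the same ``root at the offending object'' strategy: root at the 4-valent vertex in (2), and at the 4-covalent face in (3). In each case the two nonadjacent short faces (respectively, short vertices) on the boundary contribute two disjoint chords to $\langle U_1\rangle$ that cannot be joined into a single cycle without supplying additional triangles or 3-valent vertices, and any such extra incidences can be shown to reproduce one of the other four configurations, allowing an inductive or direct argument. For configurations (4) and (5) the forbidden subgraph sits on a single edge $e=uw$ flanked by two short faces $f,f'$; I would root at $f$ (respectively, at one endpoint of $e$). Then the low-valence endpoints of $e$ and the low covalence of $f'$ conspire to force vertices on opposite boundary walks of $f$ and $f'$ to coincide, producing a repeated vertex in $\langle U_n\rangle$ for some small $n$, which prevents $\langle U_n\rangle$ from being a simple cycle.

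The principal obstacle is the careful bookkeeping of the degenerate subcases in which accidental extra incidences among low-valence vertices or short faces make the first corona boundary happen to be a cycle; these must be dealt with by iterating the local obstruction to deeper coronas. The key lemma needed to close the iteration is that any such ``hidden'' cycle at $\langle U_n\rangle$ is sustained only if the same forbidden configuration reappears at a neighboring vertex of $U_n$, whence local finiteness and one-endedness guarantee that the cycle condition eventually breaks. The resulting finite case analysis is the content of \cite{BruceWat04}*{Theorem 5.1}, whose outline I would follow in detail.
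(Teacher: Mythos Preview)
The paper does not prove this proposition at all; it is quoted verbatim from \cite{BruceWat04} as background material, so there is no in-paper argument to compare against.

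As for your sketch itself: the overall strategy of rooting the Bilinski diagram at (or adjacent to) the offending configuration and showing that some $\langle U_n\rangle$ fails to be a cycle is the natural one and is presumably what Bruce and Watkins do. But at least one concrete step is wrong. In case~(1) you assert that rooting at the $3$-valent vertex $v$ gives $U_1=\{a,b,c\}$. That is false in general: by the definition in \Cref{Bilinski}, $U_1$ consists of \emph{every} vertex (other than $v$) incident with \emph{any} face in $F_1$, and $F_1$ comprises all three faces at $v$, not just the triangle. If the other two faces have covalence larger than $3$, then $U_1$ properly contains $\{a,b,c\}$, and the edge $ab$ is simply one edge of a longer boundary walk; there is no immediate obstruction to $\langle U_1\rangle$ being a cycle. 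Your treatment of cases~(2)--(5) is too schematic to evaluate (``two disjoint chords that cannot be joined into a single cycle'', ``conspire to force vertices \ldots\ to coincide''), and the final paragraph essentially concedes that the substance is a case analysis you have not carried out. What you have is an outline pointing back to \cite{BruceWat04}, not an independent proof.
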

\medskip

%%%%%%%%%%%%%%%%%%%%%%
\subsection{Face-homogeneity and realizability}\label{sec:FaceHom}

Let $k\geq3$ be an integer and let an equivalence relation be defined on the set of ordered $k$-tuples $(p_0,p_1,\ldots,p_{k-1})$ of positive integers whereby 
\begin{itemize}
\item $(p_0,p_1,\ldots,p_{k-1})\equiv (p_1,p_2,\ldots,p_{k-1},p_0)$, and
\item $(p_0,p_1,\ldots,p_{k-1})\equiv (p_{k-1},p_{k-2},\ldots,p_0)$.
\end{itemize}
The equivalence class of which $(p_0,\ldots,p_{k-1})$ is a member is the \emph{cyclic sequence} $\vs{p_0,\ldots,p_{k-1}}$, and $k$ is its \emph{length}.
There is a natural partial order $\leq$ on the set of cyclic sequences:
\[ \vs{p_0,\ldots,p_{k-1}} \leq \vs{q_0,\ldots,q_{\ell-1}} \]
if and only if $k\leq \ell$ and there exists a cyclic subsequence $q_{i_0},q_{i_1},\ldots,q_{i_{k-1}}$ occurring in either order in $[q_0,q_1,\ldots,q_{\ell-1}]$ such that $p_j\leq q_{i_j}$ for each $j\in\{0,\ldots,k-1\}$.  We write $\sigma_1<\sigma_2$ if $\sigma_1\leq\sigma_2$ but $\sigma_1\neq \sigma_2$, where $\sigma_1$ and $\sigma_2$ are cyclic sequences.

\begin{exmp} Let $\sigma_1=[4,6,8,10]$, $\sigma_2=[6,8,12,4]$, and $\sigma_3=[10,8,12,6,4]$. Then $\sigma_1<\sigma_2$ and $\sigma_1<\sigma_3$, but $\sigma_2$ and $\sigma_3$ are not comparable.
\end{exmp}

\begin{defn} Let $\sigma=\vs{p_0,p_1,\ldots,p_{k-1}}$ be a cyclic sequence of integers $\geq 3$. Then $\sigma$ is the \emph{valence sequence} of a $k$-covalent face $f$ of a tessellation $T$ if the valences of vertices incident with $f$ in clockwise or counter-clockwise order are $p_0,p_1,\ldots,p_{k-1}$. If every face of $T$ has the same valence sequence $\sigma$, then $T$ is \emph{face-homogeneous} and $\sig$ is the \emph{valence sequence of $T$}.  Thus, to say briefly that a tessellation $T$ has valence sequence $\sigma$ implies that $T$ is face-homogeneous.
\end{defn}

\begin{defn}  
Let the cyclic sequnce $\sigma$ be realizable as the valence sequence of a tessellation.  If every tessellation having valence sequence $\sigma$ is uniformly concentric, then we say that $\sigma$ is \emph{uniformly concentric}.  Otherwise $\sigma$ is \emph{non-concentric}.  If every tessellation having valence sequence $\sigma$ is non-concentric, then $\sigma$ is {\em uniformly non-concentric}.
\end{defn}

\textbf{Notation.} By convention, when distinct letters are used to represent terms in a cyclic sequence (e.g. $[p,p,q,r,q]$), the values corresponding to distinct letters are all presumed to be distinct; that is, $p\neq q\neq r\neq p$. Moreover, if some term in the cyclic sequence is given as an integer (usually $3$ or $4$), then the terms given by letters are presumed to be greater than that integer. For example, if $\sig=\vs{4,p,q}$, then we understand that $p,q>4$ and $p\neq q$.  When using subscripts in the general form $[p_0,\ldots,p_{k-1}]$, we do not make this assumption.

\begin{rem}  Not all cyclic sequences are realizable as vertex sequences of face-homogeneous tessellations of the plane.  For instance, the map with valence sequence $[3,3,3]$ (the tetrahedron) is a tessellation of the sphere but not of the plane. More importantly, there are many cyclic sequences for which no face-homogeneous map exists at all.  For instance, the valence sequence $[4,5,6,p]$ for any $p\geq3$ is not realizable, because in any such map the valences of the neighbors of a 5-valent vertex in cyclic order would have to alternate between 4 and 6.
However, this does not generalize to all cyclic sequences containing a subsequence $\vs{p,q,r}$ where $q$ is odd and $p\neq r$; for instance, $\vs{5,4,5,6,5,8}$ is realizable. 
\end{rem}
 
\begin{conj}
Suppose $\sigma$ is the valence sequence of a face-homogeneous tessellation and that $\sig$ contains $[p,q,r]$ as a subsequence, with $q$ odd and $p\neq r$. Then $\sigma$ must contain at least three terms equal to $q$.
\end{conj}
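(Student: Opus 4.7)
The plan is a local analysis at a $q$-valent vertex of the tessellation, converting the adjacency constraints into a closed-walk problem in a small multigraph and exploiting the parity of $q$. Let $v$ be a $q$-valent vertex incident to a face $f$ realizing the subsequence $[p,q,r]$ of $\sigma$, so that two of $v$'s neighbors in $f$ have valences $p$ and $r$. Let $u_1,\dots,u_q$ be the neighbors of $v$ in cyclic order, with valences $p_1,\dots,p_q$. For each $i$, the face between $u_i$ and $u_{i+1}$ places $v$ at some $q$-position of $\sigma$, and the flanking terms of that position form the unordered pair $\{p_i,p_{i+1}\}$. Let $\mathcal{A}$ denote the collection of all such unordered pairs arising at occurrences of $q$ in $\sigma$; then $|\mathcal{A}|$ is bounded above by the multiplicity of $q$ in $\sigma$.

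Form the multigraph $H$ whose edges are the pairs of $\mathcal{A}$, where $\{x,x\}$ is interpreted as a loop at $x$. The cyclic sequence $(p_1,\dots,p_q)$ is then a closed walk of length $q$ in $H$, and since $q$ is odd, $H$ must contain an odd cycle. Suppose for contradiction that $q$ occurs at most twice in $\sigma$. If $|\mathcal{A}|=1$, then $\mathcal{A}=\{\{p,r\}\}$ and $H$ is a single non-loop edge, hence bipartite, giving a contradiction. If $|\mathcal{A}|=2$ and neither edge is a loop, then $H$ has at most two edges and no loop, so it is a path, matching, or pair of parallel edges, all bipartite: contradiction again. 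Hence $H$ must contain a loop, and $\mathcal{A}=\{\{p,r\},\{x,x\}\}$ for some value $x$. If $x\notin\{p,r\}$, the loop at $x$ and the edge $\{p,r\}$ lie in different components of $H$, so any odd closed walk is confined to the loop component, forcing $p_i=x$ for every $i$; this contradicts the presence of neighbors of valences $p$ and $r$ at $v$ in $f$.

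The remaining case, $\mathcal{A}=\{\{p,r\},\{p,p\}\}$ (or symmetrically $\{\{p,r\},\{r,r\}\}$), is the main obstacle. Here $H$ is non-bipartite (an edge with a loop at one endpoint), and odd closed walks abound: any cyclic sequence over $\{p,r\}$ in which every occurrence of $r$ is isolated between two $p$'s is admissible, and such sequences exist for every odd $q$. To eliminate this case, I would transfer the analysis to an adjacent $p$-valent vertex $u$: from the positions of $p$ in $\sigma$, read off the set $\mathcal{A}_u$ of allowed neighbor-pairs at a $p$-vertex and build the corresponding multigraph $H_u$. For instance, in $\sigma=[p,q,r,p,q,p]$ the graph $H_u$ turns out to be a path $p-q-r$, which is bipartite and forces $p$ to be even; a parallel analysis at an $r$-valent vertex would force $r$ even. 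The aim is to show that in every admissible placement of the two $q$-terms in $\sigma$, the parity obstructions imposed by $H_u$ and the analogous $H_w$ at an $r$-vertex cannot be jointly satisfied with $q$ odd and $p\neq r$.

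The principal difficulty is that $\mathcal{A}_u$ and $\mathcal{A}_w$ depend on the global shape of $\sigma$ rather than only on the two marked subsequences, so the final case seems to require a combinatorial classification of all admissible $\sigma$ containing exactly two $q$-terms together with $[p,q,r]$. Completing this enumeration, and verifying that each arrangement is ruled out by parity or some further local obstruction, is the step I expect to demand the most work and to be the genuine obstacle to a complete proof.
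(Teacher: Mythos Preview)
The statement you are attempting to prove is presented in the paper as an open \emph{conjecture}; the authors give no proof, so there is nothing in the paper to compare your argument against.

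Your multigraph reformulation is sound and is essentially the right local picture: encoding the neighbor-pairs at $q$-positions of $\sigma$ as edges of $H$, observing that the cyclic valence pattern around a $q$-valent vertex is a closed walk of odd length $q$ in $H$, and hence that $H$ must be non-bipartite. Your disposal of the cases $|\mathcal{A}|=1$ and $|\mathcal{A}|=2$ with no loop (and of the disconnected-loop subcase) is correct.

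The gap you identify is genuine and is the entire difficulty. When $\mathcal{A}=\{\{p,r\},\{p,p\}\}$, the graph $H$ is non-bipartite, so the parity obstruction at the $q$-vertex vanishes, and your proposed shift to $p$- and $r$-valent vertices yields only that $p$ and $r$ must be even---which is a constraint, not a contradiction. Finishing the argument would require, as you say, an enumeration of all cyclic sequences with exactly two $q$-terms realizing this $\mathcal{A}$, together with a realizability obstruction for each; and since such sequences can have arbitrary length and arbitrary other terms, this is not a finite check. The paper's authors evidently did not see how to close this gap either, which is why the statement remains a conjecture.
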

\medskip

%%%%%%%%%%%%%%%%%%%%%% Subsection 1.4
\subsection{Polynomial versus exponential growth}\label{sec:poly/exp}

Let $x$ be a vertex of a connected graph $\Gamma$.  For each nonnegative integer $n$, the \emph{ball of radius $n$ about $x$} is the set of vertices of $\Gamma$ at distance $\leq n$ from $x$, written
\begin{equation}\label{eq:n-Ball}
B_n(x) = \{v\in V(\Gamma):d(x,v)\leq n\},
\end{equation}
where $d(-,-)$ is the {\em standard graph-theoretic metric}, that is, $d(u,v)$ is the length of a shortest path with terminal vertices $u$ and $v$.

\begin{defn}\label{defn:poly_expgrowth} 
An infinite, locally finite, connected graph $\Gamma$ has \emph{exponential growth} if for some vertex $x\in V(\Gamma)$ there exist real numbers $\alpha>1$ and $C>0$ such that, for all $n>0$, one has $|B_n(x)|>C\alpha^n$; otherwise $\Gamma$ has \emph{subexponential growth}.  We say that  $\Gamma$ has \emph{polynomial growth} of degree $d\in\N$ if there exist  positive constants $C_1$ and $C_2$  such that $C_1n^d\leq|B_n(x)| \leq C_2n^d$ for all but finitely many $n$.
\end{defn}

For example, the graph underlying the square lattice in the plane has quadratic growth ($d=2$).  If $x$ is any vertex, then  $|B_n(x)|=2n^2+2n+1$ for all $n\geq1$, and one can set $C_1=2$ and $C_2=3$.

Continuing the notation of \Cref{eq:n-Ball} and \Cref{defn:poly_expgrowth}, we consider the generating function
\begin{equation}\label{eq:BallGenFn}
\beta_x(z)=\sum_{n=0}^\infty\abs{B_n(x)}z^n
\end{equation}
We denote the radius of convergence of $\beta_x(z)$ by $R_B$ and define the {\em ball-growth rate of $\Gamma$ about $x$} to be the reciprocal of $R_B$.
 
If $\Gamma$ has exponential growth, then we have
\begin{equation}\label{eq:ExpoGrowth}
\beta_x(z)\geq\sum_{n=0}^\infty C\alpha^nz^n=\frac{C}{1-\alpha z},
\end{equation}
where $\alpha>1$ is the supremum of values for which the series of \Cref{eq:BallGenFn} converges. The convergence is absolute if and only if $|z|<1/\alpha<1$. If $\Gamma$ has polynomial growth of degree $d$, then 
$$C_1\sum_{n=0}^\infty n^d z^n\leq\sum_{n=0}^\infty\abs{B_n(x)}z^n\leq C_2\sum_{n=0}^\infty n^d z^n.$$
By the ``ratio test,'' the first and third series converge if and only if $|z|<1$.  These computations yield the following.

\begin{prop}\label{prop:ExpoPoly}  Let $R_B$ denote the radius of convergence of the generating function of \Cref{eq:BallGenFn}.  Then $R_B<1$ if and only if $\Gamma$ has exponential growth, and $R_B=1$ if and only if $\Gamma$ has polynomial growth.  Moreover, $R_B$ is independent of the vertex $x$ about which $|B_n(x)|$ is determined.
\end{prop}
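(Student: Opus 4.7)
The plan is to invoke the Cauchy--Hadamard formula $1/R_B = \limsup_{n\to\infty} |B_n(x)|^{1/n}$ and combine it with the comparisons of $\beta_x(z)$ to the geometric series in \Cref{eq:ExpoGrowth} and to the polynomial series $\sum n^d z^n$ set up immediately before the proposition statement.

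For the equivalence between exponential growth and $R_B < 1$, the forward direction is essentially read off \Cref{eq:ExpoGrowth}: termwise domination by $C/(1-\alpha z)$ forces $R_B \leq 1/\alpha < 1$. Conversely, if $R_B < 1$ then $\limsup |B_n(x)|^{1/n} > 1$, so for some $\alpha > 1$ there are infinitely many $n$ with $|B_n(x)| > \alpha^n$, and the monotonicity of $n \mapsto |B_n(x)|$ propagates this to a uniform bound $|B_n(x)| > C\alpha'^n$ for all $n$ and some $\alpha' > 1$. For the equivalence between polynomial growth and $R_B = 1$, the sandwich $C_1 n^d \leq |B_n(x)| \leq C_2 n^d$ combined with the ratio test already displayed in the text forces $\beta_x(z)$ to converge precisely for $|z| < 1$, giving $R_B = 1$; the reverse direction then follows by contrapositive from the forward statements.

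For independence of the basepoint, given any other vertex $y$ with $D = d(x,y)$, the triangle inequality gives $B_{n-D}(x) \subseteq B_n(y) \subseteq B_{n+D}(x)$ for $n \geq D$. Taking $n$-th roots and noting that $(n \pm D)/n \to 1$, the sequences $|B_n(x)|^{1/n}$ and $|B_n(y)|^{1/n}$ share a common $\limsup$, so $\beta_x$ and $\beta_y$ have the same radius of convergence by Cauchy--Hadamard.

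The most delicate step is the converse of the exponential equivalence, because $R_B < 1$ yields only a $\limsup$ lower bound whereas the definition demands a uniform one. The Lipschitz-type constraint $|B_{n+1}(x)| \leq d \cdot |B_n(x)|$ enforced by local finiteness of $\Gamma$ is what promotes the subsequential estimate to the required uniform exponential lower bound, and implicitly rules out intermediate-growth behavior in the tessellation setting the paper is concerned with.
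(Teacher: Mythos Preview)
The paper itself offers no separate proof of this proposition; it merely says ``These computations yield the following,'' referring to the series comparisons in \Cref{eq:ExpoGrowth} and the sandwich $C_1\sum n^d z^n \leq \beta_x(z)\leq C_2\sum n^d z^n$. Those computations establish only the \emph{forward} implications (exponential $\Rightarrow R_B<1$, polynomial $\Rightarrow R_B=1$), and the independence claim is explicitly deferred to \Cref{thm:InvariantGrowth}. Your forward directions and your basepoint-independence argument via $B_{n-D}(x)\subseteq B_n(y)\subseteq B_{n+D}(x)$ are correct and in fact supply more detail than the paper does.

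The genuine gap is in your converse of the exponential equivalence. You assert that monotonicity of $n\mapsto|B_n(x)|$ together with the Lipschitz bound $|B_{n+1}(x)|\leq d\,|B_n(x)|$ promotes the subsequential estimate $|B_{n_k}(x)|>\alpha^{n_k}$ to a uniform bound $|B_n(x)|>C\alpha'^n$. This is false for sequences: one can build a nondecreasing sequence $(a_n)$ with $a_{n+1}\leq 2a_n$, $\limsup a_n^{1/n}=2$, and $\liminf a_n^{1/n}=1$ by alternating long blocks where the ratio $a_{n+1}/a_n$ equals $2$ with much longer blocks where it equals $1$ (take block endpoints like $N_k=2^{2^k}$). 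Neither monotonicity nor the Lipschitz bound rules this out. A second slip: local finiteness alone does not give $|B_{n+1}(x)|\leq d\,|B_n(x)|$; you need \emph{bounded} valence, which holds for face-homogeneous tessellations but not for the general locally finite $\Gamma$ of \Cref{defn:poly_expgrowth}.

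Your contrapositive for the polynomial converse inherits the same problem: it requires that ``not polynomial'' forces $R_B<1$, which is exactly what fails for graphs of intermediate growth (where $R_B=1$ but growth is superpolynomial). You acknowledge this in your last paragraph, but the appeal to ``the tessellation setting'' is not an argument---nothing in the text up to this point excludes intermediate growth. In short, your proof of the forward directions and of basepoint independence is sound and matches or exceeds the paper's; the converses, as written, do not go through, and the paper does not prove them either.
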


It will be seen in the next subsection (see \Cref{thm:InvariantGrowth}) that the value of $R_B$ in independent of the choice of the root vertex $x$.
\medskip

It is well known (for example, see \cite{TilesPats}) that there exist exactly eleven face-homogeneous Euclidean tessellations, namely the Laves nets.  Their valence sequences $[p_0,\ldots,p_{k-1}]$ correspond to integer solutions of the equation
\[ \sum_{i=0}^{k-1}\frac1{p_i} = \frac{k-2}2. \]

A necessary condition for the existence of a face-homogeneous hyperbolic tessellation with valence sequence $[p_0,\ldots,p_{k-1}]$ is that the inequality
\begin{equation}\label{eq:HyperbolicCondition}
    \sum_{i=0}^{k-1}\frac1{p_i} < \frac{k-2}2
\end{equation}
hold.  This condition is not sufficient, because as we have seen, not every such integer solution is realizable as a valence sequence. 

\begin{defn}\label{def:AngleExcess} The \emph{angle excess} of a cyclic sequence $\sigma=[p_0,\ldots,p_{k-1}]$ is given by
\[ \eta(\sigma) = \left(\sum_{i=0}^{k-1} \frac{p_i-2}{p_i}\right)-2. \]
\end{defn} 

Motivation for this definition comes from Descartes' notion of angular defect in the Euclidean plane. 
When $\eta(\sigma)>0$, there are too many faces incident at a vertex for the faces to be regular $k$-gons in the Euclidean plane. 

\begin{prop}\label{thm:HypCond} For a cyclic sequence $\sigma=[p_0,\ldots,p_{k-1}]$, inequality (\ref{eq:HyperbolicCondition}) is equivalent to 
\begin{equation}
    \eta(\sigma)>0
\end{equation} and is a necessary condition for $\sigma$ to be a valence sequence of a face-homogeneous hyperbolic tessellation.
\end{prop}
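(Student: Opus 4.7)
The plan is to split the proof into two parts. First, for the algebraic equivalence, direct expansion yields
\[
\eta(\sigma) \;=\; \sum_{i=0}^{k-1}\left(1 - \frac{2}{p_i}\right) - 2 \;=\; (k-2) - 2\sum_{i=0}^{k-1}\frac{1}{p_i},
\]
so that $\eta(\sigma)>0$ is immediately equivalent to inequality \eqref{eq:HyperbolicCondition}.

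For the second claim -- the necessity of \eqref{eq:HyperbolicCondition} for a face-homogeneous hyperbolic tessellation -- I would argue via a Gauss--Bonnet angle count. In a hyperbolic realisation of $T$, each face $f$ may be taken as a geodesic $k$-gon in $\mathbb{H}^2$ with interior angles $\alpha_{f,0},\ldots,\alpha_{f,k-1}$. The hyperbolic Gauss--Bonnet theorem gives
\[
\sum_{j=0}^{k-1}\alpha_{f,j} \;=\; (k-2)\pi - \mathrm{Area}(f),
\]
and the normality hypothesis supplies a uniform lower bound $\mathrm{Area}(f) \geq A_0 > 0$. Meanwhile, at every vertex $v$ of $T$, the interior angles of the $\deg(v)$ incident faces sum to exactly $2\pi$.

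Next, in a Bilinski diagram of $T$, let $\widetilde{F}_n = F_1\cup\cdots\cup F_n$ denote the faces in the first $n$ coronae and let $V_n^{\mathrm{int}}$ denote those vertices whose entire face-stars lie in $\widetilde{F}_n$. Summing the Gauss--Bonnet identity over $\widetilde{F}_n$ and rearranging the double angle sum by vertex (each interior vertex contributing exactly $2\pi$) yields
\[
2\pi\,|V_n^{\mathrm{int}}| \;\leq\; \sum_{f\in\widetilde{F}_n}\sum_{j=0}^{k-1}\alpha_{f,j} \;\leq\; \bigl((k-2)\pi - A_0\bigr)|\widetilde{F}_n|.
\]
A weighted double-count of face--vertex incidences, which by face-homogeneity collapses to the identity $|\widetilde{F}_n|\sum_i 1/p_i = \sum_{f\in\widetilde{F}_n}\sum_{v\sim f}1/\deg v$, allows me to compare $|V_n^{\mathrm{int}}|$ with $|\widetilde{F}_n|\sum_i 1/p_i$ up to an error bounded by the boundary of $\widetilde{F}_n$. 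Passing to the limit and absorbing the boundary correction into the uniform gap $A_0$ produces $2\pi\sum_{i=0}^{k-1}1/p_i < (k-2)\pi$, which is \eqref{eq:HyperbolicCondition}.

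The main obstacle I anticipate is controlling this boundary error in the limit. Because hyperbolic tessellations have exponential growth, the boundary of a Bilinski corona is comparable in size to its area, so the naive Euclidean-style estimate $|\partial\widetilde{F}_n|/|\widetilde{F}_n|\to 0$ fails. The saving observation is that one does not need the boundary-to-area ratio to vanish, only to be strictly smaller than $A_0/(2\pi)$ for \emph{some} sufficiently large patch; the uniform positivity of $A_0$ together with the normality hypothesis (uniform upper and lower bounds on face diameters) should furnish such a patch, but it is precisely this quantitative isoperimetric input -- rather than any purely combinatorial bookkeeping -- that carries the hyperbolic content of the proposition.
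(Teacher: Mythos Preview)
Your algebraic equivalence is correct, and in fact this computation is all the paper implicitly offers: the proposition is stated without proof, the necessity of \eqref{eq:HyperbolicCondition} having already been asserted as a known fact in the sentence preceding \Cref{def:AngleExcess}. So for the first half you are aligned with the paper; for the second half you are attempting to prove something the paper simply cites.

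Your Gauss--Bonnet route for necessity, however, has a genuine gap---one you yourself flag but do not close. After the double-count you need $|\partial V_n|/|\widetilde{F}_n| < A_0/(2\pi)$ for some patch, and you hope normality ``should furnish'' this. It does not, at least not by the argument sketched. In a hyperbolic tessellation the isoperimetric ratio of \emph{any} finite region is bounded below by a positive Cheeger-type constant, and there is no mechanism in your outline relating that constant to $A_0/(2\pi)$. The difficulty is not a bookkeeping oversight; the averaging-over-balls strategy is intrinsically Euclidean and does not survive exponential growth without additional input that you have not supplied.

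The standard way around this is to avoid limits altogether: realize the tessellation in $\mathbb{H}^2$ so that the interior angle at each $p_i$-valent vertex is exactly $2\pi/p_i$ (such a realization exists for any face-homogeneous $\sigma$ by the classical theory the paper is invoking). Then Gauss--Bonnet applied to a \emph{single} face gives
\[
\mathrm{Area}(f) \;=\; (k-2)\pi - \sum_{i=0}^{k-1}\frac{2\pi}{p_i} \;=\; \pi\,\eta(\sigma),
\]
and positivity of the hyperbolic area yields $\eta(\sigma)>0$ immediately, with no boundary terms to control.
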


Angle excess provides a quick gauge of the growth behavior of a tessellation with valence sequence $\sigma$.  If  $\eta(\sigma)<0$, the tessellation is finite. If $\eta(\sigma)=0$, the tessellation is one of the Laves nets and has polynomial growth of degree 2. If $\eta(\sigma)>0$, the tessellation has exponential growth. Additionally, we have the following comparison result.

\begin{prop}\label{sigma&eta} Let $\sigma_1$ and $\sigma_2$ be cyclic sequences that are comparable in the partial order.  Then $\sigma_1<\sigma_2$ if and only if $\eta(\sigma_1)<\eta(\sigma_2)$.
\end{prop}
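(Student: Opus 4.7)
The plan is to unpack the definition of angle excess into the convenient form $\eta(\sigma) = (k-2) - 2\sum_{i=0}^{k-1} 1/p_i$, and then to compute $\eta(\sigma_2) - \eta(\sigma_1)$ directly from the subsequence structure guaranteed by the partial order. Specifically, if $\sigma_1 = [p_0,\ldots,p_{k-1}] \leq \sigma_2 = [q_0,\ldots,q_{\ell-1}]$, then there is an injection $j \mapsto i_j$ from $\{0,\ldots,k-1\}$ into $\{0,\ldots,\ell-1\}$ with $p_j \leq q_{i_j}$; whether the matching subsequence runs clockwise or counter-clockwise is immaterial because $\eta$ depends only on the unordered multiset of terms.

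The key computation is to split the index set $\{0,\ldots,\ell-1\}$ of $\sigma_2$ into the image $I = \{i_0,\ldots,i_{k-1}\}$ of the injection and its complement $I^c$ of size $\ell - k$. Then
\[
\eta(\sigma_2) - \eta(\sigma_1) \;=\; (\ell - k) \;+\; 2\sum_{j=0}^{k-1}\!\left(\tfrac{1}{p_j} - \tfrac{1}{q_{i_j}}\right) \;-\; 2\sum_{m \in I^c} \tfrac{1}{q_m}.
\]
The middle sum is nonnegative since $p_j \leq q_{i_j}$ forces $1/p_j \geq 1/q_{i_j}$. For the last sum, the hypothesis $q_m \geq 3$ yields $\sum_{m \in I^c} 1/q_m \leq (\ell - k)/3$, so the first and third terms together contribute at least $(\ell - k) - 2(\ell - k)/3 = (\ell - k)/3 \geq 0$.

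To conclude strict inequality when $\sigma_1 < \sigma_2$, I would split into cases. If $k < \ell$, then $(\ell - k)/3 > 0$ already gives $\eta(\sigma_2) > \eta(\sigma_1)$. If $k = \ell$, the injection is a bijection and $\sigma_1 \neq \sigma_2$ forces at least one strict inequality $p_j < q_{i_j}$, making the middle sum strictly positive. Either way, $\eta(\sigma_1) < \eta(\sigma_2)$. The converse is then automatic: since $\sigma_1$ and $\sigma_2$ are comparable, exactly one of $\sigma_1 < \sigma_2$, $\sigma_1 = \sigma_2$, or $\sigma_2 < \sigma_1$ holds; the latter two are ruled out by $\eta(\sigma_1) < \eta(\sigma_2)$ together with the forward implication just proved.

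There is no real obstacle here; the argument is a direct computation. The only mild subtlety is using the standing hypothesis $p_i, q_j \geq 3$ to control the contribution of the ``unmatched'' terms of $\sigma_2$, which is precisely what prevents a long sequence of large-valence terms from having smaller angle excess than a short sequence of small-valence terms.
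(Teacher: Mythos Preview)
Your argument is correct and is essentially the same as the paper's: both prove the forward direction by matching terms along the injection $j\mapsto i_j$ and observing that the unmatched terms of $\sigma_2$ contribute positively, then obtain the converse from comparability. The only cosmetic difference is that the paper works directly with the monotone function $p\mapsto (p-2)/p$, so the two inequalities $\sum_j\frac{p_j-2}{p_j}\le\sum_j\frac{q_{i_j}-2}{q_{i_j}}\le\sum_i\frac{q_i-2}{q_i}$ fall out immediately (each unmatched term $(q_m-2)/q_m\ge 1/3>0$), whereas you passed to reciprocals and had to recombine the $(\ell-k)$ term with $-2\sum_{I^c}1/q_m$ to see positivity.
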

\begin{proof}
Suppose that $\sigma_1<\sigma_2$, where $\sigma_1=[p_0,\ldots,p_{k-1}]$ and $\sigma_2=[q_0,\ldots,q_{\ell-1}]$. By definition there exist $q_{i_0},\ldots,q_{i_{k-1}}$ with $p_j\leq q_{i_j}$ for all $j=0,\ldots k-1$.
So
\begin{equation}\label{eq:OrderedEta}
\eta(\sigma_1)=\sum_{j=0}^{k-1}\frac{p_j -2}{p_j} \leq
        \sum_{j=0}^{k-1}\frac{q_{i_j} -2}{q_{i_j}} 
    \leq \sum_{i=0}^{\ell-1} \frac{q_i -2}{q_i} = \eta(\sigma_2).
\end{equation}
If $k=\ell$, then  $p_j< q_{i_j}$ for some $j$ and the first inequality in (\ref{eq:OrderedEta}) is strict. If $k<\ell$, the second inequality in (\ref{eq:OrderedEta}) is strict. Since $\sigma_1\neq\sigma_2$, at least one such strict inequality must hold.
\qquad\end{proof}
\medskip

%%%%%%%%%%%%%%%%%%%%%%  Subsection 1.5
\subsection{Growth formulas}\label{sec:growth-form} 

In \Cref{defn:poly_expgrowth}, the standard graph-theoretical metric was used to define polynomial and exponential growth of a connected graph.  However, to measure growth rates of tessellations, it is more convenient to use the notion of ``regional distance;"  we will count the number of graph objects in the $\nth$ corona of a Bilinski diagram centered at a given vertex $x$, and our working definition of ``growth rate\rq\rq{} will be the following.

\begin{defn}\label{defn:GrowthRate} Let $T$ be a tessellation labeled as a Bilinski diagram rooted at a vertex $x$.  Let $R$ be the radius of convergence of the power series
\begin{equation}
\varphi_x(z) = \sum_{i=1}^\infty|F_i|z^i.
\end{equation}
When $0<R<\infty$, we define the \emph{growth rate} of $T$ (with respect to $x$) to be $\gamma(T) = 1/R$.
\end{defn} 

Although it was shown in \cite{GPW} (see pages 3--4) that, for any connected planar map with bounded covalences, the above definition of growth rate is equivalent to the growth rate with respect to the standard graph-theoretic metric, we need to show that said growth rate is independent of the root of the Bilinski diagram in question.

\begin{thm}\label{thm:InvariantGrowth} The growth rate $\gamma(T)$ of a face-homogeneous tessellation $T$ computed by means of a Bilinski diagram is invariant under the choice of the root of the diagram.
\end{thm}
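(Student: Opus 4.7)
My plan is to reduce the claim to the root-invariance of ball growth in the standard graph-theoretic metric, which follows at once from the triangle inequality. A face-homogeneous tessellation $T$ with valence sequence $[p_0,\ldots,p_{k-1}]$ has bounded covalence (every face is a $k$-gon), so the equivalence established in \cite{GPW} and recalled just before the theorem applies: the radius $R$ of convergence of the Bilinski generating function $\varphi_x(z)=\sum_{i\ge 1}|F_i|z^i$ coincides with the radius $R_B$ of convergence of the ball generating function $\beta_x(z)=\sum_{n\ge 0}|B_n(x)|z^n$.

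For $R_B$, root-invariance is essentially immediate. If $x,y\in V(T)$ with $d(x,y)=d$, the triangle inequality gives $B_n(x)\subseteq B_{n+d}(y)$, hence $|B_n(x)|\le|B_{n+d}(y)|$ for all $n\ge 0$. Termwise comparison of power series with nonnegative coefficients then yields
\[
\beta_x(z)\;\le\;z^{-d}\bigl(\beta_y(z)-Q(z)\bigr),
\]
where $Q(z)$ is the polynomial formed by the first $d$ terms of $\beta_y$. Therefore $\beta_x$ converges at every real $z\in(0,R_B(y))$, giving $R_B(x)\ge R_B(y)$; the reverse inequality follows by interchanging $x$ and $y$. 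Combined with the first paragraph, this settles the case of vertex roots. For a Bilinski diagram rooted at a face $f$, I would compare it with the vertex-rooted diagram at any vertex $v$ incident with $f$: inspecting \Cref{Bilinski}, the regional distances of any face from $f$ and from $v$ differ by at most a bounded constant, so $\varphi_f$ and $\varphi_v$ are related by multiplication by a fixed Laurent polynomial in $z$ plus a finite correction, which does not affect the radius of convergence.

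The main obstacle in this approach is that the reduction rests on the equivalence between Bilinski (regional) growth and ball (graph-theoretic) growth supplied by \cite{GPW}; once that equivalence is in hand, everything else is the triangle inequality and routine comparison of nonnegative power series. A fully self-contained alternative would bypass the ball generating function altogether and instead establish directly that for any two roots $x,y$ with $d(x,y)=d$ the regional distances satisfy $|d_x(f)-d_y(f)|\le c$ for a constant $c$ depending only on $d$ and on $\max_i p_i$. This bound would yield $|F_n^{(x)}|\le\sum_{m=n-c}^{n+c}|F_m^{(y)}|$, and the same termwise comparison argument would then prove directly that $\varphi_x$ and $\varphi_y$ share a radius of convergence, yielding the theorem without passage through $\beta_x$.
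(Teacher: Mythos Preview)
Your proposal is correct and follows essentially the same route as the paper: reduce the invariance of $R_F$ to the invariance of $R_B$, then dispatch the latter by the triangle inequality. The only difference is cosmetic: where you invoke \cite{GPW} for the equivalence $R_F=R_B$, the paper reproves it inline by sandwiching $\bigcup_{i\le n}U_i(x)$ between $B_n(x)$ and $B_{n\lfloor k/2\rfloor}(x)$ and then using $|U_n|\le k|F_{n+1}|\le pk|U_{n+1}|$ to pass from $R_U$ to $R_F$.
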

\begin{proof}  Choose an arbitrary vertex $x$ of $T$ and consider a Bilinski diagram rooted at $x$.  Recall that the sequences $\set{U_n(x):0\leq n\in\mathbb{Z}}$ and $\set{F_n(x):1\leq n\in\mathbb{Z}}$ constitute the conventional labeling of $T$ as a Bilinski diagram with root vertex $x$,  As $T$ is face-homogeneous, all faces are $k$-covalent for some $k\geq3$.  Hence for any $n\geq1$ and any vertex $v\in U_{n+1}(x)$ there exists a vertex $u\in U_n$ such that $d(u,v)\leq \floor{\frac{k}{2}}$.  By induction on $n$, we obtain $d(x,v)\leq (n+1)\floor{\frac{k}{2}}$, yielding
\begin{equation}\label{eqn:U-bddabove}
\bigcup_{i=0}^n U_i(x) \subseteq B_{n\floor{k/2}}(x)
\end{equation}
and similarly,
\begin{equation}\label{eqn:U-bddbelow}
B_n(x)\subseteq \bigcup_{i=0}^n U_i(x).
\end{equation}
In addition to the power series $\varphi_x(z)$ of \Cref{defn:GrowthRate} with radius of convergence $R_F$, we require the power series $\ds{ \upsilon_x(z) = \sum \abs{U_n(x)}z^n}$  with radius of convergence $R_U$.
Writing 
\[
\Upsilon_x(z) 
    = \frac{\upsilon_x(z)}{1-z} 
    = \sum_{n=0}^\infty \left(\sum_{i=0}^n \abs{U_i(x)}\right) z^n 
    = \sum_{n=0}^\infty \abs{\bigcup_{i=0}^n U_i(x)}z^n,
\]
we have that the radius of convergence of $\Upsilon_x(z)$ equals $\min\set{R_U,1} \leq R_B$ by \Cref{eqn:U-bddabove} (where $R_B$ is as in \Cref{prop:ExpoPoly}). But similarly by \Cref{eqn:U-bddbelow} we have that $R_B \leq \min\set{R_U,1}$. Hence the radii of convergence of $\Upsilon_x(z)$ and $\beta_x(z)$ are equal, for any choice of root vertex $x$.

If $p$ is the maximum valence of the vertices in $T$, each vertex is also incident with at most $p$ faces, while each face is incident with $k$ vertices, giving
\[
\abs{U_n(x)} \leq k\abs{F_{n+1}(x)} \leq pk \abs{U_{n+1}(x)}
\]
for each $n\geq 0$, or equivalently,
\[
\frac1k \abs{U_n(x)} \leq \abs{F_{n+1}(x)} \leq \frac{p}{k} \abs{U_{n+1}(x)}.
\]
Hence the radii of convergence of $\upsilon_x(z)$ and $\varphi_x(z)$ are equal, and more importantly, $R_F = R_B$; that is, the rate of ball-growth equals the rate of growth when the Bilinski diagram is labeled from a vertex $x$. 

Finally, it follows from \Cref{prop:ExpoPoly} that ball-growth rates computed about distinct vertices are asymptotically equal in locally finite, connected, infinite graphs. Hence the radii of convergence of $\varphi_x(z)$, $\beta_x(z)$, $\beta_y(z)$, and $\varphi_y(z)$ are equal for all $x,y\in V$. That is to say, the growth rate of the graph is independent of the choice of root vertex.
\end{proof}

{\bf Notation.}  The subscript on the symbol $\varphi$ of \Cref{defn:GrowthRate} has now been shown to be superfluous and will henceforth be suppressed.
\medskip

Consider the function $\tau:\N_0\to\N_0$, (where $\N_0=\{0,1,2,\ldots\}$) given by
$$\tau(n) = \sum_{i=1}^n |F_i|.$$
The quantity 
\begin{equation}\label{eq:LimitGrowth}
\lim_{n\to\infty} \frac{\tau(n+1)}{\tau(n)}.
\end{equation}
was the definition of the growth rate of a face-homogeneous tessellation used by Moran \cite{Moran97} provided that this limit exists, in which case she called the tessellation \emph{balanced}.   Moran\rq{}s limit fails to converge only when there exist subsequences of the sequence $\left\{\frac{\tau(n+1)}{\tau(n)}\right\}_{n=1}^\infty$ with distinct limits. 

The following proposition shows that the parameters of a face-homogeneous tessellation determine an upper bound for the limit in \Cref{eq:LimitGrowth}.

\sloppypar{\begin{thm}\label{thm:BoundedRatio} Let $T$ be a face-homogeneous tessellation with valence sequence $[p_0,\ldots,p_{k-1}]$, labeled as a Bilinski diagram. Then 
\[ \limsup_{n\to\infty}\frac{\tau(n+1)}{\tau(n)} \leq 1 + \sum_{i=0}^{k-1}p_i -2k<\infty. \]
\begin{proof}
By hypothesis, each face of the tessellation shares an incident vertex with exactly 
\[ \sum_{i=0}^{k-1}(p_i-2)=\sum_{i=0}^{k-1}p_i-2k\]
 other faces. So for $n>0$,
\[ |F_{n+1}| \leq |F_n|\left(\sum_{i=0}^{k-1}p_i-2k \right), \] 
which in turn gives that for all $n>0$, 
\begin{align*}
\frac{\tau(n+1)}{\tau(n)} &\leq 1+\frac{|F_n|}{\sum_{i=0}^n|F_i|}\left(\sum_{i=0}^{k-1}p_i -2k\right) \\
    &\leq 1+\sum_{i=0}^{k-1}p_i-2k<\infty,
\end{align*}
 since $T$ is locally finite.
\qquad\end{proof}
\end{thm}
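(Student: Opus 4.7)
The plan is to bound $|F_{n+1}|$ in terms of $|F_n|$ by exploiting the fact that every face in $F_{n+1}$ must share a vertex with some face in $F_n$. Once that local-to-global bound is in hand, the ratio $\tau(n+1)/\tau(n) = 1 + |F_{n+1}|/\tau(n)$ can be controlled using the trivial inequality $|F_n|/\tau(n) \leq 1$ that comes from the definition of $\tau$.

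The first step is a local face-count. Fix a face $f$ with valence sequence $[p_0,\ldots,p_{k-1}]$, and count the faces distinct from $f$ that share at least one vertex with $f$. At a vertex $v_i$ of valence $p_i$, there are $p_i - 1$ faces other than $f$ incident with $v_i$. Two of these lie across the two edges of $f$ meeting at $v_i$ and are therefore edge-adjacent to $f$; each such edge-adjacent face is incident with two consecutive vertices of $f$ and so gets counted twice if one simply sums $\sum(p_i - 1)$. Correcting for this double count (equivalently, tallying $p_i - 3$ strictly corner-adjacent faces at each vertex and then adding back the $k$ edge-adjacent faces once) gives $\sum_{i=0}^{k-1}(p_i-2) = \sum_i p_i - 2k$ as the exact number of other faces sharing a vertex with $f$.

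The second step is to feed this into the Bilinski stratification. By definition, any face $g \in F_{n+1}$ is incident with some vertex $v \in U_n$, and $v$ is in turn incident with some face of $F_n$; hence $g$ shares a vertex with a face of $F_n$. Summing the local bound over $F_n$ therefore gives $|F_{n+1}| \leq |F_n|\bigl(\sum_i p_i - 2k\bigr)$. Writing $\tau(n+1)/\tau(n) = 1 + |F_{n+1}|/\tau(n)$ and using $|F_n| \leq \tau(n)$, the claimed $\limsup$ bound follows directly. Finiteness of the right-hand side is automatic from local finiteness, which forces each $p_i$ to be finite.

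The main obstacle, to my mind, is the bookkeeping in the local count: one must be careful to subtract off the edge-adjacent double count, since a naive application of $\sum(p_i - 1)$ would overcount by exactly $k$. This is also the only place where the face-homogeneity hypothesis enters nontrivially; everything else is direct manipulation of the definitions of $\tau$, $F_n$, and $U_n$.
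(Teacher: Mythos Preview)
Your proof is correct and follows essentially the same approach as the paper's: count the faces sharing a vertex with a fixed face to get $\sum_i p_i - 2k$, use this to bound $|F_{n+1}|$ by $|F_n|(\sum_i p_i - 2k)$, and then exploit $|F_n|\leq\tau(n)$ in the ratio $\tau(n+1)/\tau(n)=1+|F_{n+1}|/\tau(n)$. The only difference is that you spell out the double-counting correction for edge-adjacent faces and the reason every $g\in F_{n+1}$ shares a vertex with some face of $F_n$, whereas the paper simply asserts the count and the inequality.
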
}

By the ``ratio test" of elementary calculus, the above proof implies that in the case of a ``balanced'' tessellation, Moran's definition of growth rate concurs with \Cref{defn:GrowthRate}, and 
\[ \frac1R=\limsup_{n\to\infty}\frac{\tau(n+1)}{\tau(n)} = \lim_{n\to\infty}\frac{\tau(n+1)}{\tau(n)}. \]

The definition of growth rate in terms of the radius of convergence of a power series also allows us to prove the following result, which is essential in many comparisons of growth rates of various tessellations.

\begin{lem}[Comparison Lemma]\label{lem:CoronaSize}\label{thm:comp} Let $T_1$ and $T_2$ be tessellations, and  for $i=1,2$ let $|F_{i,n}|$ be the number of faces in the $n^\text{th}$ corona of a Bilinski diagram of $T_i$. Suppose that for some $N\in\N$, we have $|F_{1,n}|\leq |F_{2,n}|$ for all $n\geq N$. Then $\gamma(T_1)\leq \gamma(T_2)$.
\begin{proof}
Let 
\[ \phi_1(z) = \sum_{n=0}^{\infty} |F_{1,n}|z^n, \quad
    \phi_2(z) = \sum_{n=0}^{\infty} |F_{2,n}|z^n,  \] 
and for $i\in\{1,2\}$, let $R_i$ be the radius of convergence of $\phi_i(z)$ about $0$. Then since $|F_{1,n}|\leq |F_{2,n}|$ for sufficiently large $n$, and 
\[\limsup_{n\to\infty}\sqrt[n]{|F_{i,n}|}=\frac1{R_i}=\gamma(T_i),\] 
we have $\gamma(T_1)\leq \gamma(T_2)$.
\qquad\end{proof}
\end{lem}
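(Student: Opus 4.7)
The plan is to prove the Comparison Lemma by invoking the Cauchy--Hadamard formula, which expresses the reciprocal of the radius of convergence of a power series in terms of the $\limsup$ of the $n$-th roots of its coefficients. Since $\gamma(T_i) = 1/R_i$ by \Cref{defn:GrowthRate}, this immediately reduces the claim to an asymptotic comparison of the sequences $\{|F_{1,n}|\}$ and $\{|F_{2,n}|\}$.

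First I would form the generating functions $\phi_1(z)$ and $\phi_2(z)$ exactly as in the statement, with radii of convergence $R_1$ and $R_2$. By Cauchy--Hadamard, one has $1/R_i = \limsup_{n\to\infty}\sqrt[n]{|F_{i,n}|}$ for $i=1,2$. Next I would observe that the hypothesis $|F_{1,n}| \leq |F_{2,n}|$ for every $n \geq N$ implies, upon taking $n$-th roots, that $\sqrt[n]{|F_{1,n}|} \leq \sqrt[n]{|F_{2,n}|}$ for all $n \geq N$ (when $|F_{1,n}|=0$ the inequality is trivial, and otherwise both sides are positive). Since modifying only finitely many terms of a sequence does not affect its $\limsup$, passing to the $\limsup$ preserves the inequality, giving $1/R_1 \leq 1/R_2$, i.e., $R_1 \geq R_2$.

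Finally I would translate back to growth rates: $\gamma(T_1) = 1/R_1 \leq 1/R_2 = \gamma(T_2)$, which is exactly the conclusion. There is essentially no obstacle here; the only point requiring minor care is confirming that the tail hypothesis (starting from $n \geq N$ rather than $n = 0$) suffices, which follows because the $\limsup$ depends only on the tail of a sequence and because the radius of convergence of a power series is unaffected by altering finitely many of its coefficients. The proof is therefore short enough to be presented almost verbatim from this outline.
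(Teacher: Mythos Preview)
Your proposal is correct and follows essentially the same approach as the paper: both arguments invoke the Cauchy--Hadamard formula $1/R_i=\limsup_{n\to\infty}\sqrt[n]{|F_{i,n}|}$ and conclude from the eventual termwise inequality. If anything, you supply slightly more justification (noting that the $\limsup$ depends only on the tail and handling the trivial $|F_{1,n}|=0$ case) than the paper does.
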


%%%%%%%%%%%%%%%%%%%%%%
\subsection{The edge-homogeneous case}\label{sec:edge-hom}
We conclude our presentation of preliminary material with a quick review of what is known about growth rates of  edge-homogeneous tessellations, as this case has been completely resolved and its consequences turn out to be useful here and there in attacking the present problem.  The point of departure here is the following   classification theorem of Gr\"{u}nbaum and Shephard.  (Edge-symbols were defined in \Cref{chap:Intro}.)

\begin{prop}[\cite{GrunShep87} Theorem 1]\label{thm:UniqueEdgeSymbol} Let $p,q,k,\ell\geq 3$ be integers. There exists an edge-homogeneous, $3$-connected, finite or $1$-ended map with edge-symbol $\langle p,q;k,\ell\rangle$ if and only if exactly one of the following holds:
\begin{enumerate}
\item all of $p,q,k,\ell$ are even;
\item $k=\ell$ is even and at least one of $p,q$ is odd;
\item $p=q$ is even and at least one of $k,\ell$ is odd;
\item $p=q$, $k=\ell$, and all are odd.
\end{enumerate}
Such a tessellation is edge-transitive, and the parameters $p,q,k,\ell$ determine the tessellation uniquely up to homeomorphism of the plane. If $p=q$, then the tessellation is vertex-transitive. If $k=\ell$, then it is face-transitive. \end{prop}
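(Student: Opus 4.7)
My plan is to split the argument into three parts: necessity of the parity conditions (1)--(4), existence of such a map via a canonical accretion construction, and uniqueness together with the asserted transitivity properties.

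For necessity, suppose $M$ is an edge-homogeneous, $3$-connected, finite or $1$-ended map with edge-symbol $\langle p,q;k,\ell\rangle$. If $p\neq q$, then every edge joins a $p$-valent vertex to a $q$-valent one, so the two valence classes give a proper $2$-coloring of $V(M)$; consequently every face boundary is a properly $2$-colored cycle, forcing both $k$ and $\ell$ to be even. Dually, if $k\neq\ell$, then around every vertex the incident faces alternate in covalence $k,\ell$, which forces the vertex valence — and hence both $p$ and $q$ — to be even. A short case check shows that (1)--(4) enumerate exactly the parity patterns compatible with these two implications, ruling out all other combinations.

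For existence, I would construct a map with any admissible edge-symbol by a Bilinski-type accretion beginning at a fixed edge $e_0$, whose closed neighborhood is entirely prescribed by $\langle p,q;k,\ell\rangle$. At each successive stage, every partially exposed face must be completed to a $k$-gon or an $\ell$-gon, and every newly created boundary vertex must be extended to total valence $p$ or $q$; the edge-symbol dictates the unique combinatorial choice in each case. The parity conditions of (1)--(4) are precisely what ensure that these forced choices are globally consistent, so that where two independently built arcs meet around a face or a vertex link, the required valence or covalence on each side agrees. Depending on the sign of the angle excess, the process either terminates after finitely many steps (giving a spherical map), stabilizes periodically on the Euclidean plane, or proceeds indefinitely without introducing an accumulation point and yields a $1$-ended planar tessellation.

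Uniqueness is then almost immediate: since the accretion makes forced choices at every step, any two edge-homogeneous maps with the same edge-symbol produce combinatorially identical Bilinski diagrams when rooted at corresponding edges and are therefore isomorphic as abstract maps; Whitney's theorem on $3$-connected planar graphs promotes this combinatorial isomorphism to a homeomorphism of the sphere. Transitivity follows by the same token: rooting the construction at any edge yields the same combinatorial object, so for any two edges $e,e'$ of $M$ there is an automorphism carrying $e$ to $e'$. When $p=q$ (respectively $k=\ell$), the edge-symbol is further symmetric under swapping its vertex (resp.\ face) entries, providing an additional involution that reverses $e_0$ and upgrades edge-transitivity to vertex-transitivity (resp.\ face-transitivity).

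The main obstacle I anticipate is the consistency verification inside the accretion: one must show that the forced local completions never collide incompatibly when two partially built arcs meet. This is exactly where the case distinction in (1)--(4) is used — by tracing closed walks around each prospective face and around each prospective vertex link, one checks that the required alternation of valences and covalences closes up correctly in each of the four admissible cases, and fails in every inadmissible one. The verification is structural rather than numerical, but it is the technical core of the proof and the only place where the full strength of the parity hypothesis is essential.
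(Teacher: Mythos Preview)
The paper does not prove this proposition at all: it is quoted verbatim as Theorem~1 of Gr\"unbaum--Shephard \cite{GrunShep87} and is presented without proof as background material in the review subsection on the edge-homogeneous case. So there is no ``paper's own proof'' against which to compare your proposal.

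That said, your outline is essentially the standard route to the result and is broadly sound. The necessity argument (bipartiteness from $p\neq q$ forcing even covalences, and the dual statement) is correct and is exactly how the parity constraints arise. The existence/uniqueness argument via a forced accretion from a root edge is also the right idea and matches the spirit of the original proof. Two small caveats worth flagging: first, your appeal to ``Whitney's theorem'' to upgrade combinatorial isomorphism to a homeomorphism is stated for finite $3$-connected planar graphs embedded in the sphere, so for the $1$-ended infinite case you would need the appropriate extension (e.g., Imrich's or Thomassen's results on unique embeddings of $3$-connected infinite planar graphs); second, the consistency verification you identify as the ``technical core'' is genuinely where all the work lies, and your sketch does not actually carry it out---so as written this is a plausible plan rather than a proof.
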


Following up on the Gr\"{u}nbaum-Shephard result, the authors together with T. Pisanski completely determined the growth rates of all edge-homogeneous tessellations.  Their main result is the following.

\begin{prop}[\cite{GPW} Theorem 4.1]\label{thm:EdgeGrowth} Let the function \[g:\{t\in\N:t\geq 4\}\to[1,\infty)\]be given by
\begin{equation}
    g(t) = \frac12\left(t-2+\sqrt{(t-2)^2-4}\right).
\end{equation}
Let $T$ be an edge-homogeneous tessellation with edge-symbol $\langle p,q;k,\ell\rangle$, and let 
\begin{equation}
    t = \left(\frac{p+q}2-2\right)\left(\frac{k+\ell}2-2\right).
\end{equation}
Then exactly one of the following holds:
\begin{enumerate}
\item    the growth rate of $T$ is $\gamma(T)=g(t)$; or
\item    the edge-symbol of $T$ or its planar dual is $\esym{3,p}{4,4}$ with $p\geq 6$, and the growth rate of $T$ is $\gamma(T)=g(t-1)$.
\end{enumerate}
\end{prop}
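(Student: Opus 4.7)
The plan is to exploit the rigidity guaranteed by \Cref{thm:UniqueEdgeSymbol} and reduce the growth-rate computation to the spectral analysis of a small transition matrix. Fix an edge-homogeneous tessellation $T$ with edge-symbol $\langle p,q;k,\ell\rangle$ and label it as a Bilinski diagram rooted at a vertex (the choice is immaterial by \Cref{thm:InvariantGrowth}). Because $T$ is edge-transitive, the local pattern by which each corona begets the next is independent of $n$: each edge in $\langle U_n\rangle$ is incident with exactly one face of $F_n$ and exactly one face of $F_{n+1}$, and the covalences $k,\ell$ together with the valences $p,q$ rigidly determine how many new vertices and boundary edges are produced around each outer face.

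From this local accounting I would derive a linear recurrence for the pair $(|U_n|,|E_n|)$, where $E_n$ denotes the set of edges in $\langle U_n\rangle$. The expected outcome is a $2\times 2$ transition matrix $M$ with $\operatorname{tr}(M)=t-2$ and $\det(M)=1$; the determinant identity arises because the incidence between boundary edges and new faces is "bipartite" in a way that makes the matrix conjugate to a simple reflection-type recurrence, while the trace collects the averaged contributions $\tfrac{p+q}{2}-2$ and $\tfrac{k+\ell}{2}-2$ whose product is $t$. The characteristic polynomial $x^2-(t-2)x+1$ has roots $g(t)$ and $g(t)^{-1}$, so the dominant eigenvalue is $g(t)$, and by Perron–Frobenius-type reasoning together with the Comparison Lemma \Cref{thm:comp}, $\gamma(T)=g(t)$.

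The exceptional case $\esym{3,p}{4,4}$ (and, by planar duality, its dual symbol) must be treated separately. Here $T$ falls outside the scope of \Cref{prop:UnifConc}: forbidden configurations from \Cref{prop:UnifConcForbidden} arise, specifically adjacent $3$-valent vertices sharing two $4$-covalent faces. Along each boundary, every second vertex is $3$-valent and absorbs one of its incident quadrilaterals back into the current corona rather than producing a fresh face. Recomputing the recurrence under this "collapsing" rule shifts the effective trace of $M$ by one, producing trace $t-3$ and hence the dominant eigenvalue $g(t-1)$. The lower bound $p\geq 6$ is precisely the threshold for $\eta(\langle 3,p;4,4\rangle)>0$, ensuring exponential growth so that the claim has content.

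The main obstacle is the combinatorial case analysis dictated by \Cref{thm:UniqueEdgeSymbol}: depending on which of $p,q,k,\ell$ are odd or even and whether $p=q$ or $k=\ell$, the two types of vertices and two types of faces alternate along boundaries of coronae in distinct patterns, so the "averaged" transition matrix must be assembled differently in each scenario. One must verify in every parity case that a constant $2\times 2$ matrix governs growth from some corona onward and that the finitely many anomalous initial terms (from the root's immediate neighborhood) do not affect the radius of convergence of $\varphi(z)$—this last point being exactly what \Cref{thm:comp} is designed to supply.
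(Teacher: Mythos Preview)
The present paper does not prove this proposition; it is quoted verbatim as Theorem~4.1 of \cite{GPW} and invoked here as an established result, so there is no proof in the paper against which to compare your attempt.

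That said, your outline is broadly in the spirit of \cite{GPW}: a Bilinski diagram, a linear recurrence governed by a fixed transition matrix, and identification of the growth rate with the dominant eigenvalue. One point deserves caution. Your proposed state vector $(|U_n|,|E_n|)$ is degenerate in the concentric regime, since $\langle U_n\rangle$ is a cycle and hence $|U_n|=|E_n|$; a genuine two-dimensional recurrence must instead track two honestly independent quantities (for instance, counts of the two face types---wedges versus bricks---in the $n$th corona, which is exactly the device the present paper inherits from \cite{GPW}). With that correction the trace-and-determinant computation you anticipate does go through, and the exceptional family $\langle 3,p;4,4\rangle$ is indeed singled out by the non-concentricity obstruction you cite from \Cref{prop:UnifConcForbidden}. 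Your sketch is therefore a reasonable blueprint, but as written it is only a plan: the parity case analysis you flag as ``the main obstacle'' is the actual substance of the argument in \cite{GPW}, and nothing here carries it out.
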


Observe that each value of $t\geq4$ corresponds to  only finitely many edge-homogeneous tessellations and that pairs of planar duals correspond to the same value of $t$.  As the growth rates of edge-homogeneous tessellations are determined by an increasing function in one variable, the following is immediate.

\begin{corollary} The least growth rate of an edge-homogeneous hyperbolic tessellation is $(3+\sqrt{5})/2$.  This value is attained only by the tessellations with edge-symbols $\langle 3,3;7,7\rangle$, $\langle4,4;4,5\rangle$, $\langle3,7;4,4\rangle$, and their planar duals.
\end{corollary}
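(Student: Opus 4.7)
The plan is to apply \Cref{thm:EdgeGrowth} directly. First I would observe that $g(t)=\tfrac12(t-2+\sqrt{(t-2)^2-4})$ is strictly increasing on $[4,\infty)$ with $g(4)=1$, so a tessellation is hyperbolic (growth rate $>1$) precisely when the argument of $g$ exceeds $4$. In case~(1) of \Cref{thm:EdgeGrowth} this means $t>4$; in case~(2) it means $t-1>4$, i.e.\ $t>5$. Minimising $g$ therefore reduces to minimising the argument of $g$ subject to realizability.

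Next I would verify that $t$ is always a positive integer for realizable edge-symbols, so that the least feasible value exceeding $4$ (respectively $5$) is $5$ (respectively $6$). Indeed in cases~(1), (3) and~(4) of \Cref{thm:UniqueEdgeSymbol} both factors of $t$ are integers, while in case~(2) we have $\tfrac12(k+\ell)-2=k-2$, an even integer that absorbs any half-integer contribution from $\tfrac12(p+q)-2$. Both $t=5$ in case~(1) and $t=6$ in case~(2) then yield the same growth rate $g(5)=\tfrac12(3+\sqrt{5})$.

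I would then enumerate all edge-symbols attaining this minimum. In case~(1) the equation $(p+q-4)(k+\ell-4)=20$ with each factor at least~$2$ admits the factorisations $2\cdot10$, $4\cdot5$, $5\cdot4$ and $10\cdot2$. The factorisation $2\cdot10$ forces $\{p,q\}=\{3,3\}$, and the parity clauses of \Cref{thm:UniqueEdgeSymbol} in turn force $\{k,\ell\}=\{7,7\}$, yielding $\esym{3,3}{7,7}$. The factorisation $4\cdot5$ produces the realizable edge-symbols $\esym{4,4}{4,5}$ and $\esym{4,4}{3,6}$. The factorisations $5\cdot4$ and $10\cdot2$ yield the planar duals of these. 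For case~(2), the template $\esym{3,p}{4,4}$ with $t-1=5$ forces $p=7$, giving $\esym{3,7}{4,4}$ and its dual.

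The main obstacle I anticipate is not the enumeration itself but the need to exclude $\esym{4,4}{3,6}$ and its dual $\esym{3,6}{4,4}$ from the final list. These are realizable and satisfy $t=5$; however, since $\esym{3,6}{4,4}$ fits the template of case~(2) of \Cref{thm:EdgeGrowth}, its actual growth rate is $g(t-1)=g(4)=1$. This tessellation is therefore Euclidean (in fact the trihexagonal Laves net) rather than hyperbolic, and so correctly absent from the corollary's list. Assembling the survivors yields exactly the three edge-symbols stated together with their planar duals.
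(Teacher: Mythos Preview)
Your approach is exactly the intended one: the paper offers no proof beyond the remark that the corollary ``is immediate'' from \Cref{thm:EdgeGrowth}, and you have supplied the natural verification---monotonicity of $g$, integrality of the argument, enumeration at the minimal integer values, and the exclusion of $\esym{4,4}{3,6}$ via case~(2) of \Cref{thm:EdgeGrowth}.

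One small correction: in your integrality argument you assert that in cases~(1), (3) and~(4) of \Cref{thm:UniqueEdgeSymbol} both factors of $t$ are integers. This is false for case~(3): there $p=q$ is even but $k+\ell$ may be odd (e.g.\ $\esym{4,4}{4,5}$ itself), so $\tfrac12(k+\ell)-2$ can be a half-integer. The fix is the same as the one you gave for case~(2): the factor $\tfrac12(p+q)-2=p-2$ is even and absorbs the half. Your conclusion that $t\in\mathbb{Z}$ stands, and the rest of the argument is unaffected.
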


%\begin{rem}
\emph{Remark.} 
It is evident from \Cref{thm:UniqueEdgeSymbol} that if a tessellation is both edge- and face-homogeneous, then its edge-symbol and valence sequence have, respectively, either the forms $\esym{p,p}{k,k}$ and $\vs{p,p,\ldots,p}$ or the forms $\esym{p,q}{k,k}$ and $\vs{p,q,\ldots,p,q}$, the latter pair being possible only when $k$ is even.
%\end{rem}

We mention that, by an argument similar to the proof of \Cref{thm:BoundedRatio}, one easily obtains the following upper bound for the growth rate of an edge-homogeneous tessellation.

\begin{prop}\label{prop:BoundedRatio} Let $T$ be an edge-homogeneous tessellation with edge-symbol $\langle p,q;k,\ell\rangle$. Then for any labeling of $T$ as a Bilinski diagram, one has
\[ \lim_{n\to\infty}\frac{\tau(n+1)}{\tau(n)} \leq 1+\max\{pk,qk,p\ell,q\ell\}.\]
\end{prop}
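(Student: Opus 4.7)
The plan is to adapt the proof of \Cref{thm:BoundedRatio} essentially verbatim, replacing the face-homogeneity bound with an analogous one derived from the edge-symbol. By the inductive construction of the Bilinski diagram, every face $f'\in F_{n+1}$ is incident with some vertex of $U_n$, which is in turn incident with some face $f\in F_n$; hence $f'$ shares at least one vertex with some $f\in F_n$. So it suffices to upper bound, for any face $f\in F_n$, the number of faces of $T$ distinct from $f$ which share an incident vertex with $f$.

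In an edge-homogeneous tessellation with edge-symbol $\langle p,q;k,\ell\rangle$, every face has covalence $k$ or $\ell$ and every vertex has valence $p$ or $q$. Thus $f$ is incident with at most $\max\{k,\ell\}$ vertices, and each such vertex is incident with at most $\max\{p,q\}$ faces. A crude (possibly overcounting, which only helps the bound) count of face-vertex incidences around $f$ therefore yields at most
\[
\max\{k,\ell\}\cdot\max\{p,q\}\;=\;\max\{pk,qk,p\ell,q\ell\}
\]
faces sharing a vertex with $f$ other than $f$ itself. Summing over $f\in F_n$,
\[
|F_{n+1}|\;\leq\;|F_n|\cdot\max\{pk,qk,p\ell,q\ell\}.
\]

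From this, exactly as at the end of the proof of \Cref{thm:BoundedRatio},
\[
\frac{\tau(n+1)}{\tau(n)} \;=\; 1+\frac{|F_{n+1}|}{\tau(n)} \;\leq\; 1+\frac{|F_n|}{\tau(n)}\,\max\{pk,qk,p\ell,q\ell\} \;\leq\; 1+\max\{pk,qk,p\ell,q\ell\},
\]
and taking $\limsup$ (or $\lim$, whenever it exists in the sense of Moran) yields the claim. There is no real obstacle; the only point requiring minor care is that the incidence-based overcount correctly consolidates into $\max\{k,\ell\}\max\{p,q\}$, which equals the advertised $\max\{pk,qk,p\ell,q\ell\}$.
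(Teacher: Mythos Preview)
Your proof is correct and follows exactly the approach the paper indicates, namely adapting the argument of \Cref{thm:BoundedRatio} by replacing the exact count $\sum_{i=0}^{k-1}(p_i-2)$ with the crude incidence bound $\max\{k,\ell\}\cdot\max\{p,q\}=\max\{pk,qk,p\ell,q\ell\}$ appropriate to an edge-symbol. The paper itself gives no separate proof beyond the remark that the analogous argument applies, so your write-up is precisely what is intended.
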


%%%% Part 2: Accretion and Monomorphic
\section{Accretion and Monomorphic Valence Sequences}
\subsection{Accretion}\label{sec:Accretion}

Given an arbitrary face-homogeneous tessellation $T$ with valence sequence ${\sigma=[p_0,p_1,\ldots,p_{k-1}]}$, we wish to apply \Cref{defn:GrowthRate} to determine its growth rate.   Letting $T$ be labeled as a Bilinski diagram, we require a means to evaluate $|F_n|$ for all $n\in\N$.  This is done inductively; each face $f\in F_n$ ``begets" a certain number of facial ``offspring" in $F_{n+1}$, and that number is determined by the configuration of $f$ within $\induced{F_n}$, that is, what the valences are of the vertices incident with $f$ (in the rotational order of $\sigma$) that belong, respectively, to $U_{n-1}$ and more importantly to $U_n$.

A class of identically configured faces (in any corona) is a \emph{face type}, and is denoted by $\ft_i$ for some range of $i=1,\ldots,r$. The benefit of using face types is that we can define an $r$-dimensional column vector $\vec{v}_n$, called the \emph{$\nth$ distribution vector}, which lists the frequency with which each face type occurs in the $\nth$ corona. Thus, if $\vec{j}$ is the $r$-dimensional vector of $1$s, then $|F_n|=\vec{j}\cdot\vec{v}_n$  via the standard dot product.

\Cref{fig:offspring} depicts a face $f\in F_n$ of some tessellation and the faces in $F_{n+1}$ which are determined by the face type of $f$. These faces are called the \emph{offspring} of $f$, and the figure is accordingly called the \emph{offspring diagram} for $f$.
%% FIGURE %%%%%%%%%%%%%%%%%%%%%%%%%%%%%%%%%%%%%%%%%
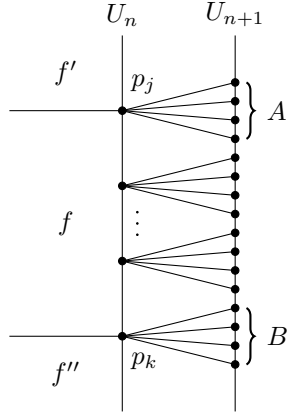
\begin{figure}[htbp]
\[
\begin{tikzpicture}
[
    bvert/.pic = {\draw [fill=black] (0,0) circle [radius=0.05];},
]
\foreach \a in {0,1,2,3}{
    \pic at (0,-1.5+\a) {bvert};
    \foreach \b in {-1.5,-.5,.5,1.5}{
        \draw (0,-1.5+\a) -- (1.5,-1.5+\a+.25*\b) pic {bvert};
        }
    }
\draw (0,-2.5) -- (0,2.5) 
    (1.5,-2.5) -- (1.5,2.5) 
    (-1.5,1.5) -- (0,1.5) 
    (0,-1.5) -- (-1.5,-1.5);
\node [above] at (0,2.5) {$U_n$};
\node [above] at (1.5,2.5) {$U_{n+1}$};
\node [above right] at (0,1.6) {$p_j$};
\node [below right] at (0,-1.6) {$p_k$};
\node at (-.75,0) {$f$};
\node at (-.75,2) {$f'$};
\node at (-.75,-2) {$f''$};
\foreach \a in {-.15,0,.15}{
    \draw [fill] (.2,\a) circle [radius=.01];
    }
\draw [thick, decorate, decoration={brace, amplitude=3pt}] 
    (1.65,1.875) -- (1.65,1.125) node [midway, right=4pt] {$A$};
\draw [thick, decorate, decoration={brace, amplitude=3pt}] 
    (1.65,-1.125) -- (1.65,-1.875) node [midway, right=4pt] {$B$};
\end{tikzpicture}
\]
\caption{\label{fig:offspring}A face $f$ in $F_n$ of a tessellation $T$, along with the offspring of $f$ in $F_{n+1}$.}
\end{figure}
%%%%%%%%%%%%%%%%%%%%%%%%%%%%%%%%
As the vertex labeled $p_{j}$ is incident with both faces $f$ and $f'\in F_n$, one-half of those faces in $F_{n+1}$ labeled as $A$ in the figure count as offspring of $f$, and one-half are counted as offspring of $f'$. Similarly, half of the faces labeled by $B$ count as offspring of $f$ and half as offspring of $f''$.  All those faces between labels $A$ and $B$ in \Cref{fig:offspring} are wholly offspring of $f$. Those faces which are offspring of $f$, or offspring of offspring of $f$, and so on, are called collectively \emph{descendants} of~$f$.

\begin{defn}\label{subsec:GenFaceTypes}
With respect to the labeling of a Bilinski diagram, each vertex incident with a face $f\in F_n$ lies in $U_{n-1}$ or $U_n$. The pattern of valences of vertices in $U_{n-1}$ and in $U_n$ determines the \emph{face type} of $f$.
The three face types occurring most routinely are called \emph{wedges}, \emph{bricks}, and \emph{notched bricks}.  A face $f$ in $F_n$ is a \emph{wedge} if it is incident with exactly one vertex in $U_{n-1}$. The face $f$ is a \emph{brick} if it incident with exactly two adjacent vertices in $U_{n-1}$ and at least two vertices in $U_n$. Finally, $f$ is a \emph{notched brick} if it is incident with three consecutive vertices of $U_{n-1}$, of which the middle vertex is $3$-valent, and $f$ is incident with two or more vertices in $U_n$. For a given labeling of a tessellation $T$ as a Bilinski diagram, the face types of $T$ are indexed $\ft_1,\ldots,\ft_r$ for some $r\in\N$; we explain the method by which indices are assigned after the statement of \Cref{thm:GrowthOrder}.
\end{defn}

An algorithm by which one can describe the faces, corona by corona, of a tessellation labeled as a Bilinski diagram is called an \emph{accretion rule}. Often some homogeneous system of recurrence relations determines such an accretion rule.  In this case,  the $\nth$ distribution vector $\vec{v}_n$ defined above has the property that the $j^\text{th}$ component of $\vec{v}_n$ is the number of faces of type $\ft_j$ in the $\nth$ corona. We then encode the system of recurrences into a \emph{transition matrix} $M$ such that $\vec{v}_{n+1}=M\vec{v}_n$ holds for all $n\geq 1$. When $M=\left[m_{i,j}\right]$ is such a matrix, the entry $m_{i,j}$ is the number of faces of type $\ft_j$ that are offspring of a face of type $\ft_i$.
We require the following result from \cite{GPW}.

\begin{prop}[\cite{GPW}, Theorem 3.1]\label{prop:OGF} Let $T$ be a tessellation labeled as a Bilinski diagram with accretion rule specified by the transition matrix $M$ and first distribution vector  $\vec{v}_1$. Then the ordinary generating function for the sequence $\left\{\abs{F_n}\right\}_{n=1}^\infty$ is
\begin{equation}
\varphi(z) = \abs{F_0}+z\left( \vec{j}\cdot(I-zM)^{-1}\vec{v}_1 \right),
\end{equation} where $I$ is the identity matrix and $\vec{j}$ is the vector of $1$s.
\end{prop}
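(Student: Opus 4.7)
The plan is to combine two standard facts: first, that the defining property $\vec{v}_{n+1}=M\vec{v}_n$ iterates to $\vec{v}_n = M^{n-1}\vec{v}_1$, and second, the Neumann (geometric) series identity $\sum_{n\ge 0}(zM)^n = (I-zM)^{-1}$ for matrices. The rest is bookkeeping.

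First, I would record that, since the $j^{\text{th}}$ coordinate of $\vec{v}_n$ counts the faces of type $\ft_j$ in the $n^{\text{th}}$ corona, the total face count in that corona is $|F_n| = \vec{j}\cdot \vec{v}_n$ for all $n\ge 1$. Combining this with $\vec{v}_n = M^{n-1}\vec{v}_1$ yields $|F_n| = \vec{j}\cdot M^{n-1}\vec{v}_1$. Substituting into the definition of $\varphi$,
\[
\varphi(z) \;=\; |F_0| + \sum_{n=1}^\infty |F_n|\,z^n \;=\; |F_0| + \sum_{n=1}^\infty \bigl(\vec{j}\cdot M^{n-1}\vec{v}_1\bigr) z^n \;=\; |F_0| + z\,\vec{j}\cdot\Bigl(\sum_{n=0}^\infty (zM)^n\Bigr)\vec{v}_1,
\]
where in the last step I factor out one power of $z$ and reindex.

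Next I would establish that $\sum_{n=0}^\infty (zM)^n = (I-zM)^{-1}$. There are two ways to justify this. As a formal identity of power series in $z$ with matrix coefficients, the equality $(I-zM)\sum_{n=0}^N (zM)^n = I - (zM)^{N+1}$ telescopes and the right side has no terms of degree $\le N$ other than $I$, so the formal inverse of $I-zM$ is the Neumann series. Analytically, since the entries of $M$ are nonnegative integers bounded above (by \Cref{thm:BoundedRatio} the row sums of $M$ are bounded, so the spectral radius $\rho(M)$ is finite), the series converges absolutely on the disk $|z|<1/\rho(M)$, which is a neighborhood of $0$; on this disk $I-zM$ is invertible and the two are equal.

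Combining these two observations yields
\[
\varphi(z) \;=\; |F_0| + z\bigl(\vec{j}\cdot (I-zM)^{-1}\vec{v}_1\bigr),
\]
as claimed. I do not expect any real obstacle here; the only subtlety is making sure the indexing offset (the $\vec{v}_1$ playing the role of the ``initial condition'' and the extra factor of $z$) is handled correctly, and that the use of $(I-zM)^{-1}$ is justified either formally or on a disk about $0$. The proposition is essentially the standard fact that the generating function of a homogeneous linear vector recurrence is rational, with denominator related to the characteristic polynomial of $M$.
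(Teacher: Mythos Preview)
Your argument is correct and is the standard derivation: iterate $\vec v_{n+1}=M\vec v_n$ to $\vec v_n=M^{n-1}\vec v_1$, dot with $\vec j$, and sum the resulting matrix geometric series. Note that the paper does not actually supply its own proof of this proposition; it quotes the result from \cite{GPW} (Theorem~3.1 there), so there is no in-paper argument to compare against beyond observing that your approach is exactly the expected one.
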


By using \Cref{defn:GrowthRate}, we can prove the following more directly than we did in Theorem 3.4 of \cite{GPW}.

\begin{thm}\label{thm:Eigenvalue} 
If $M$ is the transition matrix of a tessellation $T$ and $\Lambda$ is the maximum modulus of an eigenvalue of $M$, then $\gamma(T)=\Lambda$.
\begin{proof}
We can write the generating function $\varphi(z)$ of \Cref{prop:OGF} as a rational function $u(z)/v(z)$, with $v(z)$ determined entirely by $(I-zM)^{-1}$.
Specifically, using Cramer's rule where $r$ denotes the order of $M$, we have
\begin{equation}\label{eq:MatInv}
\begin{split}
(I-zM)^{-1} &= \frac1{\det(I-zM)}\adj(I-zM) \\
    &= \frac1{(-z)^r \det(M-\frac1zI)}\adj(I-zM) \\
    &= \frac1{(-z)^r\chi(\frac1z)}\adj(I-zM)
\end{split}\end{equation}
where $\chi(\frac1z)$ is the characteristic polynomial (in $\frac1z$) of $M$. Entries of the adjoint $\adj(I-zM)$ are polynomials in $z$ of degree at most $r-1$, and so $v(z)=(-z)^r\chi(\frac1z)$. As $\chi(\frac1z)$ is a polynomial in $\frac1z$ of degree exactly $r$, $v(z)$ has a nonzero constant term and the roots of $v$ occur precisely at the roots of $\chi(\frac1z)$. These are precisely the reciprocals of the eigenvalues of $M$. Thus the minimum modulus of a pole of $\varphi(z)$ is $1/\Lambda$. As this is the definition of the radius of convergence of a power series expanded about $0$, we have $\gamma(T)=\Lambda$.
\qquad\end{proof}
\end{thm}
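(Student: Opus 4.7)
My plan is to combine the rational generating function identity from Proposition \ref{prop:OGF} with the definition $\gamma(T) = 1/R$, where $R$ is the radius of convergence of $\varphi(z)$. Since a rational function's radius of convergence equals the minimum modulus of its poles, the task reduces to locating those poles in terms of spectral data of $M$.

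First I would apply Cramer's rule to rewrite $(I - zM)^{-1} = \adj(I-zM)/\det(I-zM)$. The entries of $\adj(I-zM)$ are polynomials in $z$ of degree at most $r-1$, where $r$ is the order of $M$, so $\varphi(z) = u(z)/v(z)$ with $v(z) = \det(I-zM)$ and $u(z)$ a polynomial. The key manipulation is then to pull out $(-z)^r$ so that
\[
\det(I-zM) = (-z)^r \det\!\bigl(M - \tfrac{1}{z}I\bigr) = (-z)^r \chi(1/z),
\]
where $\chi$ is the characteristic polynomial of $M$. Because $\chi$ has degree exactly $r$, the right-hand side is a polynomial in $z$ of degree $r$ with nonzero constant term $(-1)^r$, and its roots are precisely the reciprocals $1/\lambda$ of the eigenvalues $\lambda$ of $M$. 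The root of smallest modulus is therefore $1/\Lambda$, which would give $\gamma(T) = \Lambda$.

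The main obstacle I anticipate is ensuring that the pole of $\varphi(z)$ at $z = 1/\Lambda$ is not cancelled by a corresponding zero of the numerator $z\vec{j}\cdot\adj(I-zM)\vec{v}_1$. In principle such cancellation could lower the growth rate below $\Lambda$. One way to rule this out is a Perron--Frobenius argument: since $M$ has nonnegative integer entries and $\vec{v}_1,\vec{j}$ are entrywise positive in every relevant component, the contribution to $\varphi(z)$ along the dominant eigenvector is nonzero, so the $1/\Lambda$ pole survives. A more self-contained route would be to observe that $\{|F_n|\}$ is a nonnegative sequence satisfying the linear recurrence with characteristic polynomial $\chi$, and then invoke the classical fact that the exponential growth rate of such a sequence matches the maximum modulus of an eigenvalue appearing with nonzero coefficient in its closed form. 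Either path yields the equality $\gamma(T) = \Lambda$.
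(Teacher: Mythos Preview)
Your argument is essentially identical to the paper's: express $(I-zM)^{-1}$ as $\adj(I-zM)/\det(I-zM)$, factor $\det(I-zM)=(-z)^r\chi(1/z)$, and identify the zeros of the denominator with reciprocals of eigenvalues of $M$, so that the nearest singularity sits at $1/\Lambda$. The paper's proof stops exactly there and does \emph{not} address the cancellation issue you flag; in that respect you are being more scrupulous than the original. One caution on your proposed Perron--Frobenius fix: transition matrices in this paper need not have nonnegative integer entries (the matrix for $[4,p,q]$ in \Cref{example:GrowthOf-4pq} contains $-1$ and half-integers), so that route would require further justification.
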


%%%%%%%%%%%%%%%%%%%%
\subsection{Monomorphic, Uniformly Concentric Sequences}\label{sec:MonoUC}
 
As we have already remarked, valence sequences of face-homogeneous tessellations are unlike edge-symbols of  edge-homogeneous tessellations in two significant ways:  (i) the requirements for realizability of an edge-symbol are simpler and less stringent than the realizability criteria for a cyclic sequence, and (ii)  two or more non-isomorphic face-homogeneous tessellations may share a common valence sequence.  This latter property motivates the following definition.

\begin{defn} \label{defn:MonoPoly}
Let $\sigma$ be a cyclic sequence.  If there exists (up to isomorphism) a unique face-homogeneous tessellation with valence sequence $\sigma$, then we say that $\sigma$ is \emph{monomorphic}.  If there exist at least two (non-isomorphic) tessellations with valence sequence $\sigma$, then $\sigma$ is \emph{polymorphic}. 
\end{defn}

\begin{prop}[Moran \cite{Moran97}]\label{coval3} All realizable cyclic sequences of length $3$ are monomorphic.
\end{prop}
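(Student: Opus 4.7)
The plan is to establish a \emph{triangle rigidity} property that pins down the entire tessellation from the local data at any single face. Since $\sigma = [p_0, p_1, p_2]$ has length $3$, every face is a triangle, and any two consecutive terms of the cyclic sequence $[p_0, p_1, p_2]$ uniquely determine the third. Consequently, for any edge $e$ in a face-homogeneous tessellation $T$ with valence sequence $\sigma$, if the endpoints of $e$ have valences $p_i$ and $p_j$, then both triangles incident with $e$ must have their third vertex of valence $p_k$, where $\{i,j,k\} = \{0,1,2\}$.

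Now let $T_1$ and $T_2$ be two face-homogeneous tessellations with valence sequence $\sigma$. I would pick faces $f_1$ of $T_1$ and $f_2$ of $T_2$, and choose a bijection $\phi_0$ between their incident vertices that preserves valences and matches the cyclic orientation of $\sigma$. Labeling $T_1$ and $T_2$ as Bilinski diagrams rooted at $f_1$ and $f_2$ respectively, I would extend $\phi_0$ corona by corona to an isomorphism $\phi \colon T_1 \to T_2$. At the inductive step, for every edge on the outer boundary of $\langle F_n\rangle$, triangle rigidity dictates the valence of the third vertex of the unique face in $F_{n+1}$ incident with that edge. These new faces glue together around each vertex of $U_n$ in a manner identically prescribed in both $T_1$ and $T_2$, which determines $\langle F_{n+1}\rangle$ uniquely and allows $\phi_n$ to extend to $\phi_{n+1}$.

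The main obstacle is verifying this inductive extension is consistent around a vertex $v \in U_n$ incident with several faces in $F_{n+1}$. If $v$ has valence $p_i$, then the cyclic order of the $p_i$ triangles at $v$ forces a specific pattern on the valences of its neighbors: alternating between $p_j$ and $p_k$ when $p_j \ne p_k$, or uniformly $p_j$ when $p_j = p_k$. Realizability of $\sigma$ guarantees that this pattern closes up globally (for instance, forcing $p_i$ to be even whenever $p_j \ne p_k$), so the partial pattern inherited from $\langle F_n\rangle$ can be completed consistently and agrees with the genuine structure of $T_1$ and of $T_2$ at $v$. Once this consistency is checked, the union $\phi = \bigcup_n \phi_n$ is the required isomorphism $T_1 \to T_2$, showing that $\sigma$ is monomorphic.
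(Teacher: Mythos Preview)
The paper does not supply its own proof of this proposition; it is stated as a result of Moran \cite{Moran97} and used without argument. So there is no in-paper proof to compare your attempt against.

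That said, your approach is the natural one and is essentially correct. The decisive observation is exactly your ``triangle rigidity'': since each face is a triangle, knowing the valences at two of its vertices forces the valence at the third (it is the remaining entry of the multiset $\{p_0,p_1,p_2\}$). This in turn forces the cyclic pattern of neighbor-valences around every vertex, and realizability guarantees the necessary parity conditions for that pattern to close up. Your corona-by-corona extension of a face bijection to a global isomorphism is then routine. One small caution: the Bilinski diagram need not be concentric for sequences such as $[3,p,p]$ or $[4,p,q]$, so phrasing the induction in terms of ``the outer boundary of $\langle F_n\rangle$'' as a cycle can mislead. It is cleaner to run the induction purely combinatorially---extending the partial isomorphism one triangle at a time across each boundary edge---which does not rely on any concentricity hypothesis and avoids having to sort out which edges of $\langle F_n\rangle$ genuinely border a face of $F_{n+1}$.
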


A second property of interest is whether a given valence sequence is uniformly concentric.  These two properties thus yield four classes of valence sequences.  Not surprisingly, the class most amenable to an elegant and simple accretion rule consists of those that are both monomorphic and uniformly concentric.

One can find in \cite{SiagWat07} a complete classification of cyclic sequences of length $k$ for $3\leq k\leq5$ in terms of \Cref{defn:MonoPoly} which will help us to narrow our investigation.  (It is actually the equivalent dual problem that is treated in \cite{SiagWat07}, and the term ``covalence sequence'' is used.  In the present work we have opted to follow Moran \cite{Moran97}, speaking rather in terms of ``valence sequences.'')

We now turn to considering the relative growth rates of tessellations with monomorphic valence sequences. 
The ideal condition would be to have that the partial order on cycic sequences is mirrored by the natural order on growth rates: that is, if $T_1$ and $T_2$ are tessellations with valence sequences $\sig_1\leq \sig_2$, then $\gam(T_1)\leq \gam(T_2)$. For monomorphic, uniformly concentric valence sequences, this is precisely the case, as stated below in \Cref{thm:GrowthOrder}. In order to prove the theorem, we now demonstrate the necessary machinery via the following example, which can be readily generalized.

\begin{exmp}Consider $T_1$ and $T_2$ to be  face-homogeneous tessellations with monomorphic valence sequences $\sig_1=\vs{4,5,4,5}$ and $\sig_2=\vs{4,6,6,4,5}$, respectively, both labeled as face-rooted Bilinski diagrams. Note that $\sig_1<\sig_2$. We continue the convention that $F_{i,n}$ denotes the set of faces of the $\nth$ corona of $T_i$. (The reader may follow Figures 2 through 7.)  Starting with $T_1$, we construct by induction a sequence $\{T'_j:j\in\N\}$ of tessellations such that:
\begin{enumerate}
\item $T'_0=T_1$ as a base for the induction, 
\item if we denote by $F'_{j,n}$ the set of faces in the $\nth$ corona of $T'_j$, then  for each $j\in\N$, the unions of the first $n$ coronas of $T'_j$ satisfy 
\[\induced{\bigcup_{n=1}^j F'_{j,n}}\cong\induced{\bigcup_{n=1}^j F_{2,n}}\]
as induced subgraphs, and
\item $\abs{F_{1,n}}\leq \abs{F'_{j,n}}$ for all $n\in\N$.
\end{enumerate}

%% FIGURE %%%%%%%%%%%%%%%%%%%%%%%%%%%%%%%%%%%%%%%%%
\begin{figure}[htbp]
\[
\begin{tikzpicture}
[
    bvert/.pic = {\draw [fill=black] (0,0) circle [radius=0.05];},
    wvert/.pic = {\draw [fill=white] (0,0) circle [radius=0.05];}
]
%edges
\draw (0,0) circle [radius=2] circle [radius=3];
\foreach \a in {0,1,2,3}{
    \draw ({-45+90*\a}:1) -- ({-45+90*(\a+1)}:1);
    }
\foreach \a in {-45,135}{
    \foreach \b in {-45,0,45}{
        \draw (\a:1) -- (\a+\b:2);
        }
    \foreach \b in {-22.5,22.5}{
        \draw (90+\a:1) -- (90+\a+\b:2);
        }
    \foreach \b in {-1,1}{
        \draw ({\a+\b*45}:2) -- ({\a+\b*(45+45/11)}:3);
        \foreach \c in {1,3,5}{
            \draw (\a+22.5*\b:2) -- ({\a+\b*90*\c/11}:3);
            }
        \draw (\a+90:2) -- (\a+90+\b*90/11:3);
        \foreach \c in {2,4}{
            \draw (\a+90+\b*22.5:2) -- (\a+90+\b*\c*90/11:3);
            }
        }
    \draw (\a:2) -- (\a:3);
    }
%vertices
\foreach \a in {0,1}{
    \draw 
        ({-45+180*\a}:1) pic {wvert}
        ({45+180*\a}:1) pic {bvert};
    }
\foreach \a in {0,1,...,7}{
    \draw
        ({-45+45*\a}:2) pic {bvert}
        ({-22.5+45*\a}:2) pic {wvert};
    }
\foreach \a in {0,1,...,21}{
    \draw
        ({-45+180*\a/11}:3) pic {wvert}
        ({-45+90/11+180*\a/11}:3) pic {bvert};
    }
\end{tikzpicture}
\]
\caption[$T_1$ in the construction of the Growth Comparison Theorem]{The first three coronas of $T_1$}
\end{figure}
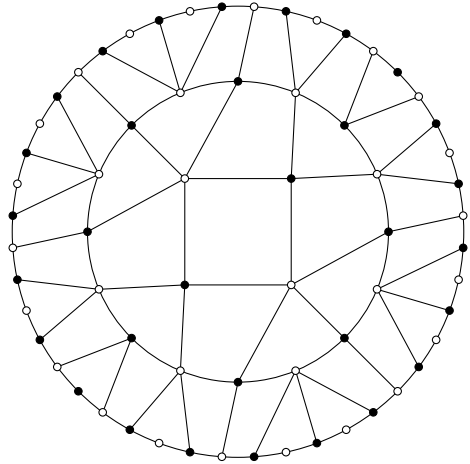%%%%%%%%%%%%%%%%%%%%%%%%%%%%%%%%

%% FIGURE %%%%%%%%%%%%%%%%%%%%%%%%%%%%%%%%%%%%%%%%%
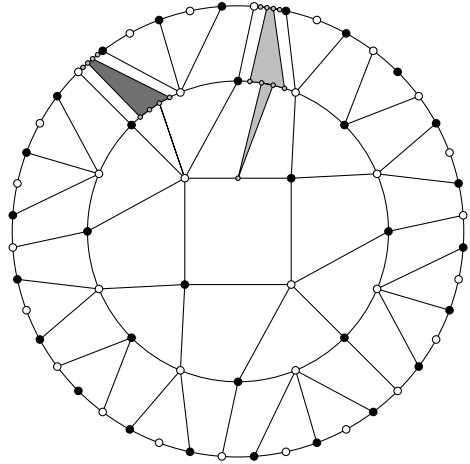
\begin{figure}[htbp]
\[
\begin{tikzpicture}
[
    bvert/.pic = {\draw [fill=black] (0,0) circle [radius=0.05];},
    wvert/.pic = {\draw [fill=white] (0,0) circle [radius=0.05];},
    dvert/.pic = {\draw [fill=darkgray] (0,0) circle [radius=0.03];},
    gvert/.pic = {\draw [fill=lightgray] (0,0) circle [radius=0.03];}
]
%edges
\draw (0,0) circle [radius=2] circle [radius=3];
\foreach \a in {0,1,2,3}{
    \draw ({-45+90*\a}:1) -- ({-45+90*(\a+1)}:1);
    }
\foreach \a in {-45,135}{
    \foreach \b in {-45,0,45}{
        \draw (\a:1) -- (\a+\b:2);
        }
    \foreach \b in {-22.5,22.5}{
        \draw (90+\a:1) -- (90+\a+\b:2);
        }
    \foreach \b in {-1,1}{
        \draw ({\a+\b*45}:2) -- ({\a+\b*(45+45/11)}:3);
        \foreach \c in {1,3,5}{
            \draw (\a+22.5*\b:2) -- ({\a+\b*90*\c/11}:3);
            }
        \draw (\a+90:2) -- (\a+90+\b*90/11:3);
        \foreach \c in {2,4}{
            \draw (\a+90+\b*22.5:2) -- (\a+90+\b*\c*90/11:3);
            }
        }
    \draw (\a:2) -- (\a:3);
    }
\filldraw [fill=lightgray] 
    (0,.707) -- (76.5:2) -- (81:2) -- (76.5:2) -- (72:2) -- 
    (81:3) -- (873/11:3) -- (81:3) -- (909/11:3) -- (927/11:3) --
    (909/11:3) -- (85.5:2) -- (81:2) -- (0,.707);
\filldraw [fill=darkgray!75]
    (135:1) -- (121.5:2) -- (117:2) --     (1431/11:3) --     
    (1449/11:3) --     (130.5:2) -- (126:2) -- (121.5:2) -- (135:1);
%vertices
\draw
    (1413/11:3) pic {gvert}
    (1467/11:3) pic {gvert}
    (121.5:2) pic {gvert}
    (117:2) pic {gvert}
    (1431/11:3) pic {gvert}
    (1449/11:3) pic {gvert}
    (130.5:2) pic {gvert}
    (126:2) pic {gvert};
\draw 
    (0,.707) pic {gvert}  
    (76.5:2) pic {gvert} 
    (72:2) pic {gvert}  
    (81:3) pic {gvert} 
    (873/11:3) pic {gvert} 
    (927/11:3) pic {gvert} 
    (909/11:3) pic {gvert} 
    (85.5:2) pic {gvert} 
    (81:2) pic {gvert};
\foreach \a in {0,1}{
    \draw 
        ({-45+180*\a}:1) pic {wvert}
        ({45+180*\a}:1) pic {bvert};
    }
\foreach \a in {0,1,...,7}{
    \draw
        ({-45+45*\a}:2) pic {bvert}
        ({-22.5+45*\a}:2) pic {wvert};
    }
\foreach \a in {0,1,...,21}{
    \draw
        ({-45+180*\a/11}:3) pic {wvert}
        ({-45+90/11+180*\a/11}:3) pic {bvert};
    }
\end{tikzpicture}
\]
\caption[$T'_1$ in the construction of the Growth Comparison Theorem]{The first three coronas of $T'_1$. The dark gray region is a subgraph inserted by augmentation of the valence of a vertex from 5 to 6; the light gray region is a subgraph inserted while interpolating a $6$-valent vertex along an incident edge. These insertions continue throughout all coronas of $T'_1$.}
\end{figure}%%%%%%%%%%%%%%%%%%%%%%%%%%%%%%%%

To construct $T'_1$ from $T'_0$, the valence sequence of the root face of $T'_0$ must change from $\sig_1$ to $\sig_2$. To do so, we augment the valence of a $5$-valent vertex $v\in U_1$ to $6$-valent and then subdivide an edge of $\induced{U_1}$ incident with $v$ by inserting a $6$-valent vertex. Augmentation and interpolation are both performed via the insertion of an infinite ``cone" as follows. We choose a sequence of edges $e_2, e_3, e_4, \ldots$, with $e_i\in\induced{U_i}$, such that $e_2$ and $v$ are incident with a common face in $F_1$, and for each $i\geq 2$, $e_i$ and $e_{i+1}$ are incident with a common face in $F_i$. On each of these edges we interpolate vertices, and we insert edges connecting vertices between $U_i$ and $U_{i+1}$ ensuring that every face so created has covalence $5$. Furthermore, if a created face is incident  only with interpolated vertices, then its valence sequence is $\sig_2$. This insertion is well-defined precisely because $\sig_2$ is monomorphic, {i.e.}, the vertices and edges may be inserted in exactly one way. 

The resulting tessellation after the procedure just described  is denoted by $T'_1$. Faces of $T'_1$ fall into three classes: first, there are faces which have valence sequence $\sig_1$ and in $T'_0$ were not incident with any part of the inserted cone; second, there are those faces with valence sequence $\sig_2$ that have been inserted; finally, there are faces which are incident with newly inserted edges but which have neither valence sequence $\sig_1$ nor $\sig_2$. A face $f$ in this third class has covalence equal to the length of $\sig_2$, but some vertices incident with $f$ have valences from $\sig_1$. These faces may occur in all coronas outward from the first corona.
%% FIGURE %%%%%%%%%%%%%%%%%%%%%%%%%%%%%%%%%%%%%%%%%
\begin{figure}[htbp]
\[\begin{tikzpicture}
[
    scale = 1,
    bvert/.pic = {\draw [fill=black] (0,0) circle [radius=0.05];},
    wvert/.pic = {\draw [fill=white] (0,0) circle [radius=0.05];},
    dvert/.pic = {\draw [fill=darkgray] (0,0) circle [radius=0.05];},
    gvert/.pic = {\draw [fill=lightgray] (0,0) circle [radius=0.05];}
]
%edges
\draw (0,4) -- (10/28,4) (10-10/28,4) -- (10,4) (2,2) -- (3,2) (6,2) -- (8,2)
    (0,4) -- (2,2) -- (5,0) -- (8,2) (7,2) -- (10,4);
\filldraw [thick,fill=darkgray!75]
     (5,0) -- (5,2) -- (3,2) -- 
     (5/7,4) -- (5/14,4) -- (135/14,4) -- (65/7,4) -- 
     (6,2) -- (5,2);
\draw [thick]
    (25/14,4) -- (3,2) -- (40/14,4)
    (50/14,4) -- (4,2) -- (65/14,4)
    (75/14,4) -- (5,2) -- (90/14,4) (5,2) -- (105/14,4)
    (6,2) -- (115/14,4);
% vertices
\draw (0,4) pic {wvert}
    (10,4) pic {bvert}
    (5,0) pic {gvert}
    (2,2) pic {bvert}
    (3,2) pic {gvert}
    (4,2) pic {gvert}
    (5,2) pic {gvert}
    (6,2) pic {gvert}
    (7,2) pic {wvert}
    (8,2) pic {bvert};
\foreach \a in {1,2,...,27}{
    \draw (10*\a/28,4) pic {gvert};
    }
\foreach \a in {2,5,8,12,14,16,18,20,22,24}{
    \node [above] at (10*\a/28,4) {$4$};
    }
\foreach \a in {13,17,19,23}{
    \node [above] at (10*\a/28,4) {$5$};
    }
\foreach \a in {1,3,4,6,7,9,10,11,15,21,25,26,27}{
    \node [above] at (10*\a/28,4) {$6$};
    }
\node [minimum size=.05] (marked) [below] at (0,4) {$\uparrow$};
%\draw [->, thick] (0,3.5) -- (marked);
\node [above] at (0,4) {$5$};
\node [above] at (10,4) {$4$};
\node [below left] at (2,2) {$4$};
\node [below] at (3,2) {$5$};
\node [below] at (4,2) {$4$};
\node [below left] at (5,2) {$6$};
\node [below] at (6,2) {$4$};
\node [below] at (7,2) {$5$};
\node [below right] at (8,2) {$4$};
\node [below] at (5,0) {$6$};
\end{tikzpicture}\]
\caption[Effect of increasing a valence in the Growth Comparison Theorem]{An expanded view of the subgraph inserted when increasing the valence of a $5$-valent vertex to $6$-valent. Note that the $5$-valent vertex in the upper left, marked by the arrow, is disrupting the valence sequence of the white face with which it is incident; if the marked vertex were $6$-valent, that face would have valence sequence $\sig_2=\vs{4,6,6,4,5}$.}
\end{figure}
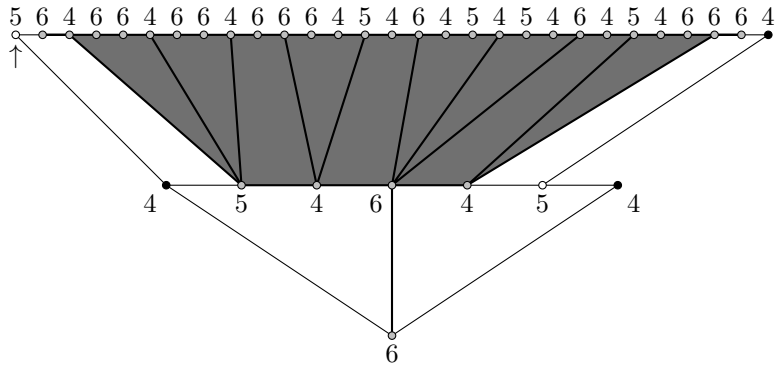%%%%%%%%%%%%%%%%%%%%%%%%%%%%%%%%
%% FIGURE %%%%%%%%%%%%%%%%%%%%%%%%%%%%%%%%%%%%%%%%%
\begin{figure}[htbp]
\[\begin{tikzpicture}
[
    scale = 1,
    bvert/.pic = {\draw [fill=black] (0,0) circle [radius=0.05];},
    wvert/.pic = {\draw [fill=white] (0,0) circle [radius=0.05];},
    dvert/.pic = {\draw [fill=darkgray] (0,0) circle [radius=0.05];},
    gvert/.pic = {\draw [fill=lightgray] (0,0) circle [radius=0.05];}
]
%edges 
\draw (1/4,4) -- (0,4) -- (7/4,2) -- (9/4,2) (7/4,2) -- (4,0) -- (6,0) -- (33/4,2) -- (31/4,2) (33/4,2) -- (10,4) -- (39/4,4);
\draw [thick] (1/4,4) -- (39/4,4) (9/4,2) -- (31/4,2);
\foreach \x in {2,5,8}{
    \draw [thick] (9/4,2) -- (\x/4,4);
    }
\draw [thick] (11/4,2) -- (10/4,4);
\foreach \x in {12,15,18}{
    \draw [thick] (13/4,2) -- (\x/4,4);
    }
\filldraw [fill=lightgray, draw=black, thick] (15/4,2) -- (27/4,2) -- (25/4,4) -- (20/4,4) -- (15/4,2);
\foreach \x in {27,30,33}{
    \draw [thick] (29/4,2) -- (\x/4,4);
    }
\draw [thick] (31/4,2) -- (35/4,4) (31/4,2) -- (38/4,4);
\foreach \x in {11,17,23,29}{
    \draw [thick] (5,0) -- (\x/4,2);
    }
%vertices
\draw (4,0) pic {dvert} (5,0) pic {gvert} 
    (6,0) pic {bvert} (7/4,2) pic {bvert} 
    (33/4,2) pic {wvert} (0,4) pic {wvert} 
    (10,4) pic {bvert};
\foreach \x in {9,11,...,31}{
    \draw (\x/4,2) pic {gvert};
    }
\foreach \x in {1,2,...,20}{
    \draw (\x/4,4) pic {gvert};
    }
\foreach \x in {25,26,...,39}{
    \draw (\x/4,4) pic {gvert};
    }
\node [minimum size=.05] (marked) [below] at (0,4) {$\uparrow$};
%\draw [->, thick] (0,3.5) -- (marked);
\foreach \x in {1,3,4,6,7,9,10,11,13,14,16,17,19,20,27,33,37,38,39}{
    \node [above] at (\x/4,4) {\small$6$};
    }
\foreach \x in {2,5,8,12,15,18,26,28,30,32,34,36,40}{
    \node [above] at (\x/4,4) {\small$4$};
    }
\foreach \x in {0,25,29,31,35}{
    \node [above] at (\x/4,4) {\small$5$};
    }
\foreach \x in {7,11,15,19,23,27,31}{
    \node [below] at (\x/4,2) {\small$4$};
    }
\foreach \x in {9,13,21,25,33}{
    \node [below] at (\x/4,2) {\small$5$};
    }
\node [below] at (17/4,2) {\small$6$};
\node [below] at (29/4,2) {\small$6$};
\node [below] at (4,0) {\small$6$};
\node [below] at (5,0) {\small$6$};
\node [below] at (6,0) {\small$4$};

\end{tikzpicture}
\]
\caption[Effect of interpolating a vertex in the Growth Comparison Theorem]{An expanded view of the subgraph inserted when interpolating a $6$-valent vertex along an edge incident to the root. Again note the marked $5$-valent vertex in the upper left. (The large shaded region represents a number of faces of valence sequence $\vs{4,6,6,4,5}$ which are too dense to draw nicely in the Euclidean plane.)}
\end{figure}
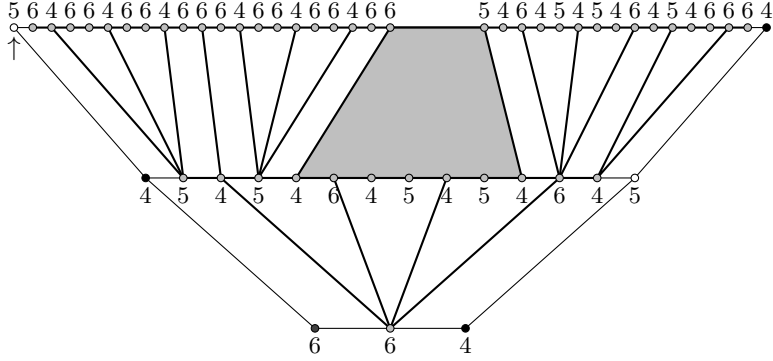%%%%%%%%%%%%%%%%%%%%%%%%%%%%%%%%

We compare now the tessellations $T_1$, $T'_1$, and $T_2$. In each case, the $0^\text{th}$ corona contains only the root face.
So from our construction, \[\abs{F_{1,0}}=\abs{F'_{1,0}}=\abs{F_{2,0}},\,\text{and for all }n\in\N_0,\,\abs{F_{1,n}}\leq \abs{F'_{1,n}},\] as we have inserted faces into every corona outward from the first.

We construct $T'_2$ from $T'_1$ just as we created $T'_1$ from $T'_0=T_1$; there is, however, one additional type of interpolation which may occur. Specifically, a vertex must be interpolated in an edge incident with two adjacent faces in $F'_{1,1}$. In \Cref{fig:46645-collapsed-v2}, an example of such an edge is marked with an arrow. This obstacle proves to be minor, as the necessary interpolation is shown in \Cref{fig:46645-insert} -- rather than interpolating a vertex on an edge incident with vertices in both $U_1$ and $U_2$, the vertex and its two neighbors are interpolated in $U_2$, replacing a $(5,4,5)$-path in $\induced{U_2}$ with a $(5,4,6,4,5)$-path.

%% FIGURE %%%%%%%%%%%%%%%%%%%%%%%%%%%%%%%%%%%%%%%%%
\begin{figure}[htbp]%%%%%%%%%%%%%%%%%%%%%%%%%%%%%%%%%%%%%
\[
\begin{tikzpicture}
[
    bvert/.pic = {\draw [fill=black] (0,0) circle [radius=0.05];},
    wvert/.pic = {\draw [fill=white] (0,0) circle [radius=0.05];},
    dvert/.pic = {\draw [fill=darkgray] (0,0) circle [radius=0.03];},
    gvert/.pic = {\draw [fill=lightgray] (0,0) circle [radius=0.03];}
]
%edges
\draw (0,0) circle [radius=2] circle [radius=3];
\foreach \a in {0,1,2,3}{
    \draw ({-45+90*\a}:1) -- ({-45+90*(\a+1)}:1);
    }
\draw (-45:1) -- (0:2);
\draw (0:2) -- (90/22:3);
\foreach \a in {-45,135}{
    \foreach \b in {-45,0}{
        \draw (\a:1) -- (\a+\b:2);
        }
    \foreach \b in {-22.5,22.5}{
        \draw (90+\a:1) -- (90+\a+\b:2);
        }
    \draw ({\a-45}:2) -- ({\a-(45+45/11)}:3);
    \foreach \b in {-1,1}{
        \foreach \c in {1,3,5}{
            \draw (\a+22.5*\b:2) -- ({\a+\b*90*\c/11}:3);
            }
        \draw (\a+90:2) -- (\a+90+\b*90/11:3);
        \foreach \c in {2,4}{
            \draw (\a+90+\b*22.5:2) -- (\a+90+\b*\c*90/11:3);
            }
        }
    \draw (\a:2) -- (\a:3);
    }
\filldraw [thick, fill=lightgray] 
    (0,.707) -- (76.5:2) -- (81:2) -- (76.5:2) -- (72:2) -- 
    (81:3) -- (873/11:3) -- (81:3) -- (909/11:3) -- (927/11:3) --
    (909/11:3) -- (85.5:2) -- (81:2) -- (0,.707);
\filldraw [thick, fill=darkgray!75]
    (135:1) -- (121.5:2) -- (117:2) --     (1431/11:3) --     
    (1449/11:3) --     (130.5:2) -- (126:2) -- (121.5:2) -- (135:1);
\filldraw [fill=darkgray!50, thick]
    (135:1) -- (180:2) node [midway, below] {$\uparrow$} -- (675/4:2) -- (2007/11:3) -- (2043/11:3) -- (765/4:2) -- (180:2);
%vertices
\draw
    (1413/11:3) pic {gvert}
    (1467/11:3) pic {gvert}
    (121.5:2) pic {gvert}
    (117:2) pic {gvert}
    (1431/11:3) pic {gvert}
    (1449/11:3) pic {gvert}
    (130.5:2) pic {gvert}
    (126:2) pic {gvert};
\draw 
    (0,.707) pic {gvert}  
    (76.5:2) pic {gvert} 
    (72:2) pic {gvert}  
    (81:3) pic {gvert} 
    (873/11:3) pic {gvert} 
    (927/11:3) pic {gvert} 
    (909/11:3) pic {gvert} 
    (85.5:2) pic {gvert} 
    (81:2) pic {gvert};
\draw
    (675/4:2) pic {gvert}
    (765/4:2) pic {gvert}
    (1971/11:3) pic {gvert}
    (2007/11:3) pic {gvert}
    (2043/11:3) pic {gvert}
    (189:3) pic {gvert};
\foreach \a in {0,1}{
    \draw 
        ({-45+180*\a}:1) pic {wvert}
        ({45+180*\a}:1) pic {bvert};
    }
\foreach \a in {0,1,...,7}{
    \draw
        ({-45+45*\a}:2) pic {bvert}
        ({-22.5+45*\a}:2) pic {wvert};
    }
\foreach \a in {0,1,2,3,4,5,6,7,8,9,10,11,12,13,15,16,17,18,19,20,21}{
    \draw
        ({-45+180*\a/11}:3) pic {wvert}
        ({-45+90/11+180*\a/11}:3) pic {bvert};
    }
\draw (-135-4*90/11:3) pic {bvert};
\end{tikzpicture}
\]
\caption{\label{fig:46645-collapsed-v2}Beginning the construction of $T'_2$ from $T'_1$.}
\end{figure}
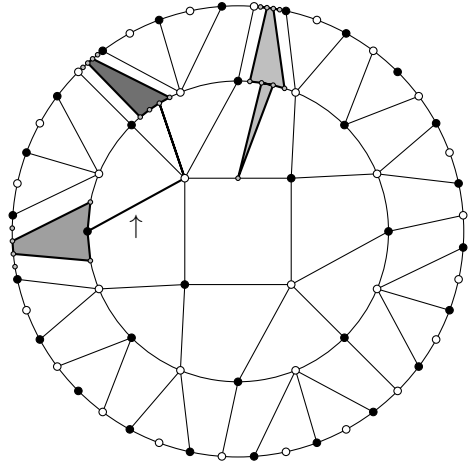%%%%%%%%%%%%%%%%%%%%%%%%%%%%%%%%

%% FIGURE %%%%%%%%%%%%%%%%%%%%%%%%%%%%%%%%%%%%%%%%%
\begin{figure}[htbp]%%%%%%%%%%%%%%%%%%%%%%%%%%%%%%%%%%%%%
\[
\begin{tikzpicture}
[
    bvert/.pic = {\draw [fill=black] (0,0) circle [radius=0.07];},
    wvert/.pic = {\draw [fill=white] (0,0) circle [radius=0.07];},
    dvert/.pic = {\draw [fill=lightgray] (0:.08) -- (60:.08) -- (120:.08) -- (180:.08) -- (240:.08) -- (300:.08) -- (0:.08);},
    gvert/.pic = {\draw [fill=lightgray] (0,0) circle [radius=0.07];},
]
%edge LEFT DIAGRAM
\draw (0,3) -- (4,3) (2,3) -- (2,0) (0,3/2) -- (4,3/2);
% edge RIGHT DIAGRAM
\draw (6,3) -- (6.4,3) (9.6,3) -- (10,3) (6,3/2) -- (7,3/2) (9,3/2) -- (10,3/2);
\filldraw [thick, fill=darkgray!50]
    (8,0) -- (8,3/2) -- (7,3/2) -- (6.8,3) -- (6.4,3) -- (9.6,3) -- (9.2,3) -- (9,3/2) -- (8,3/2);
\draw [thick] (7,3/2) -- (7.2,3)
    (8,3/2) -- (7.6,3)
    (8,3/2) -- (8,3)
    (8,3/2) -- (8.4,3)
    (9,3/2) -- (8.8,3);
%vertices LEFT DIAGRAM
\draw (0,3) pic {bvert} (4,3) pic {bvert} (2,3/2) pic {bvert}
    (0,3/2) pic {wvert} (2,3) pic {wvert} (4,3/2) pic {wvert}
    (2,0) pic {dvert};
\node [above] at (0,3) {$4$};
\node [above] at (2,3) {$5$};
\node [above] at (4,3) {$4$};
\node [below] at (0,3/2) {$5$};
\node [below right] at (2,3/2) {$x$};
\node [below] at (4,3/2) {$5$};
\node [right] at (2,0) {$6$};
% vertices RIGHT DIAGRAM
\foreach \x in {1,2,...,9}{
    \draw (6+\x*4/10,3) pic {gvert};
    }
\draw (7,3/2) pic {gvert} (8,3/2) pic {gvert} (9,3/2) pic {gvert};
\draw (6,3) pic {bvert} (10,3) pic {bvert} (6,3/2) pic {wvert} (10,3/2) pic {wvert} (8,0) pic {dvert};
\foreach \x in {1,2,4,6,8,9}{
    \node [above] at (6+4*\x/10,3) {$6$};
    }
\node [above] at (6,3) {$4$};
\node [above] at (10,3) {$4$};
\foreach \x in {3,5,7}{
    \node [above] at (6+4*\x/10,3) {$5$};
    }
\node [below] at (6,3/2) {$5$};
\node [below] at (7,3/2) {$4$};
\node [below right] at (8,3/2) {$x$};
\node [below] at (9,3/2) {$4$};
\node [below] at (10,3/2) {$5$};
\node [right] at (8,0) {$6$};
\end{tikzpicture}
\]
\caption[Effect of interpolating a vertex between adjacent faces of $F'_{j,n}$ in the Growth Comparison Theorem]{\label{fig:46645-insert}In the diagram to the left, the $(4,6)$-edge at the bottom must have a $6$-valent vertex interpolated, along with the attendant subgraph. However, we wish to avoid non-concentricity; hence the single $4$-valent vertex $x$ is expanded to a $(4,6,4)$-path as in the diagram on the right.}
\end{figure}
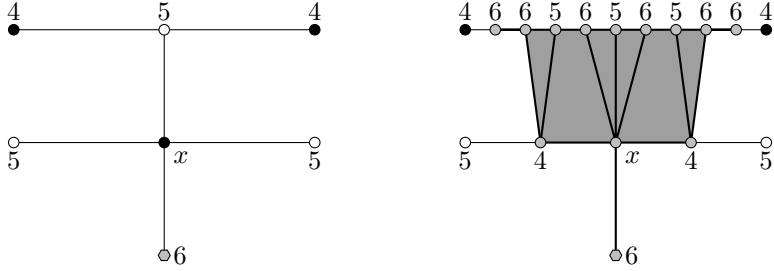%%%%%%%%%%%%%%%%%%%%%%%%%%%%%%%%

We continue by induction; suppose a tessellation $T'_j$ has been created by this process. Then in the $j^\text{th}$ corona, there are finitely many faces which require a finite number of vertices to have their valences increased and a finite number of edges along which we must interpolate a vertex. This creates $T'_{j+1}$ such that 
\[\abs{F_{1,n}}\leq \abs{F'_{j+1,n}} = \abs{F_{2,n}}\] for $n<j+1$, as the first $j$ coronas are comprised only of faces with valence sequence $\sig_2$. Furthermore, 
\[\abs{F_{1,n}}\leq \abs{F'_{j+1,n}}\] for all $n\in \N$. In this manner we can construct an infinite sequence of tessellations, namely $\{T'_j:j\in\N\}$, with the properties that $\abs{F_1,n}\leq \abs{F'_{j,n}}$ for any $j,n\in\N_0$, and $\abs{F'_{j,n}}=\abs{F_{2,n}}$ whenever $j>n$.
\end{exmp}

In the previous example, we constructed the sequence in the process of transforming $T_1$ with valence sequence $\vs{4,5,4,5}$ into $T_2$ with valence sequence $\vs{4,6,6,4,5}$; however, the process of creating $\{T'_j:j\in\N_0\}$ is identical in any case where $T_1$ and $T_2$ are face-homogeneous and uniformly concentric with monomorphic valence sequences $\sig_1$ and $\sig_2$, respectively, where $\sig_1<\sig_2$. Thus by \Cref{thm:comp}, we obtain the following result.

\begin{thm}[Growth Comparison Theorem]\label{thm:GrowthOrder} Let $\sig_1$ and $\sig_2$ be monomorphic valence sequences realized by tessellations $T_1,T_2\in\scr{G}_{4,4}\cup\scr{G}_{3+,5}\cup\scr{G}_{3,6}$, with $\sig_1<\sig_2$. Then $\gam(T_1)\leq \gam(T_2)$.\qquad\endproof
\end{thm}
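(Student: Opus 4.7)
The plan is to mirror the worked example and reduce the proof to an application of the Comparison Lemma (\Cref{thm:comp}). Concretely, I would construct by induction on $j\in\N_0$ a sequence $\{T'_j\}$ of tessellations (face-homogeneous only in their interior) satisfying (i) $T'_0=T_1$, (ii) the induced submap on the first $j$ coronas of $T'_j$ is isomorphic to that on the first $j$ coronas of $T_2$, and (iii) $\abs{F_{1,n}}\leq\abs{F'_{j,n}}$ for every $n\in\N$. Granting such a sequence, fix $n$ and choose $j>n$; then (ii) gives $\abs{F'_{j,n}}=\abs{F_{2,n}}$, and combined with (iii) we obtain $\abs{F_{1,n}}\leq\abs{F_{2,n}}$ for all $n$. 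The conclusion $\gam(T_1)\leq\gam(T_2)$ then follows immediately from \Cref{thm:comp}.

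The inductive step from $T'_j$ to $T'_{j+1}$ is a local insertion supported on the $(j+1)$-st corona and propagated outward. Because $\sig_1<\sig_2$ in the partial order on cyclic sequences, each face in the $(j+1)$-st corona of $T'_j$ whose valence sequence is still $\sig_1$ requires finitely many corrections of two kinds to acquire valence sequence $\sig_2$: \emph{augmentation} of a $p$-valent vertex into a $q$-valent one with $q>p$, and \emph{interpolation} of a new vertex of the appropriate valence along an edge of the corona. Each correction is realized by attaching an infinite ``cone'' of newly created faces extending outward through every subsequent corona, seeded by a sequence of edges $e_j,e_{j+1},e_{j+2},\ldots$ with $e_i\in\induced{U_i}$ and $e_i,e_{i+1}$ cofacial, exactly as in the worked example. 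Monomorphism of $\sig_2$ is precisely what makes the cone well-defined: once its root face is specified, the remainder of the cone is forced up to isomorphism.

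The main obstacle, and the one subtlety beyond bookkeeping, is ensuring that these insertions do not destroy the concentric structure that organizes the coronas. The delicate case is an interpolation on an edge shared by two adjacent faces of $F'_{j,j+1}$, where a naive subdivision would place a new vertex straddling two coronas. The resolution, illustrated by \Cref{fig:46645-insert} in the worked example, is to replace the offending vertex of the outer corona by a short path that absorbs the new structure while keeping every new vertex on the correct side of the corona boundary. It is at this step that the hypothesis $T_1,T_2\in\scr{G}_{4,4}\cup\scr{G}_{3+,5}\cup\scr{G}_{3,6}$ is crucial: \Cref{prop:UnifConc} guarantees uniform concentricity and bounds by two the number of edges of $\induced{U_n}$ incident with each face of $F_{n+1}$, reducing the possible local patterns to a short list for which the replacement is routine to exhibit.

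Because each inductive step only adds faces and never deletes them, property (iii) is automatic, while (i) and (ii) are immediate from the construction. Combined with the Comparison Lemma as described in the first paragraph, this yields $\gam(T_1)\leq\gam(T_2)$ and completes the proof.
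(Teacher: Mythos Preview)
Your proposal is correct and follows essentially the same approach as the paper: build the interpolating sequence $\{T'_j\}$ via augmentation and interpolation cones, invoke monomorphism of $\sigma_2$ to make each cone well-defined, handle the shared-edge subtlety by the path-replacement trick of \Cref{fig:46645-insert}, and conclude via \Cref{thm:comp}. You have also correctly isolated where the hypothesis $T_1,T_2\in\scr{G}_{4,4}\cup\scr{G}_{3+,5}\cup\scr{G}_{3,6}$ enters, namely through \Cref{prop:UnifConc} to keep the Bilinski diagrams concentric throughout the construction.
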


Our convention is to index the face types ($\ft_1,\ldots,\ft_r$ for some $r$) in the following order: first wedges, then bricks, then notched bricks, and finally, other face types if any. A wedge in $F_n$ with face type $\ft_i$ is incident with a $p_{i-1}$-valent vertex in $U_{n-1}$, for $i=1,\ldots,k$. Similarly, the indices of face types of bricks begin with a brick in $F_n$ incident with a $p_0$-valent vertex and a $p_{k-1}$-valent vertex in $U_{n-1}$. A new index $\ft_j$ is not introduced if there is some $\ft_i$ for $i<j$ with the same configuration of vertices in $U_{n-1}$ and $U_n$, up to orientation. For example, the valence sequence $\vs{4,6,8,8,6,4}$ yields seven face types $\ft_1,\ldots,\ft_7$, of which $\ft_1$, $\ft_2$, and $\ft_3$ are wedge types and $\ft_4$ through $\ft_7$ are brick types.

When a monomorphic sequence $\vs{p_0,\ldots,p_{k-1}}$ is realized by a tessellation in $\bigmonouc$, then every face, with respect to \emph{any} Bilinski diagram, can be only a wedge, a brick, or a notched brick.  The indexing of face types when ${p_i\neq p_j}$ for $i\neq j$ allows a stricter labeling which we can use in several other cases. A face $f$ in $F_n$ is a wedge of type $\wft{i}$ when the vertex incident with $f$ in $U_{n-1}$ corresponds to valence $p_{i-1}$ in $\sig$. If instead $f$ is a brick with incident vertices in $U_{n-1}$ corresponding to valences $p_{i-1}$ and $p_{i-2}$ (indices here taken modulo $k$), then $f$ has face type $\bft{i}$. Finally, if $f$ a notched brick whose incident vertices in $U_{n-1}$ have valences $p_i$, $p_{i-1}=3$, and $p_{i-2}$, then $f$ has face type $\nft{i}$. It is important to note that if $p_{i-1}\neq 3$, then faces of type $\nft{i}$ never occur as offspring. This stricter labeling is used explicitly only for the few theorems which follow, by which we determine the number of offspring of each instance of these general face types. Furthermore, we demonstrate a first application of the accretion rules and half-counting of faces that were introduced in \Cref{sec:Accretion}. 

\textbf{Notation:} Let $T$ be a face-homogeneous tessellation with valence sequence $\sigma$, labeled as a Bilinski diagram. We denote by $\Omega(\ft)$  the number of faces in $F_{n+1}$ that are counted as offspring of a single face of face type $\ft$ in $F_n$, for any $n>0$. For ${T\in\bigmonouc}$ we let $\Omega(\wft{i})$, $\Omega(\bft{i})$, and $\Omega(\nft{i})$ denote the number of offspring of a single wedge, brick, or notched brick of , respectively, of the given type.

\begin{lem}\label{thm:NumOffs} For a face-homogeneous tessellation in  $\bigmonouc$ with monomorphic valence sequence $\sig=\vs{p_0,\ldots,p_{k-1}}$, one has for $i\in\{1,\ldots,k\}$,
\begin{align}
    \label{eqn:WedgeNumOff}
    \Omega(\wft{i}) &= \frac{p_{i-2}+p_i}2 - 2k+3+\sum_{j\notin I_1}p_j,\textrm{ and} \\
    \label{eqn:BrickNumOff}
    \Omega(\bft{i}) &= \frac{p_{i-3}+p_i}2 - 2k+5 + \sum_{j\notin I_2}p_j,
\end{align}
where $I_1 = \{i-2,i-1,i\}$ and $I_2=\{i-3,i-2,i-1,i\}$. Also, when $p_{i-1}=3$,
\begin{align}
    \label{eqn:N-BrickNumOff}
    \Omega(\nft{i}) &= \frac{p_{i-3}+p_{i+1}}2-2k+7+\sum_{j\notin I_3}p_j
\end{align}
with $I_3 = \{i-3,i-2,i-1,i,i+1\}$.
\end{lem}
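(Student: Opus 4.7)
The plan is to count the offspring of a face $f \in F_n$ directly by classifying the faces of $F_{n+1}$ incident with $f$. Since $T \in \bigmonouc$, by \Cref{prop:UnifConc} the Bilinski diagram is uniformly concentric and every face of $F_{n+1}$ meets $f$ in at most two edges of $\langle U_n\rangle$. Hence every offspring falls into one of three categories: bricks of $F_{n+1}$ sharing an edge of $f$ in $\langle U_n\rangle$, wedges of $F_{n+1}$ at an \emph{interior} $U_n$-vertex of $f$ (one not adjacent on $f$ to any $U_{n-1}$-vertex), and wedges of $F_{n+1}$ at a \emph{corner} $U_n$-vertex of $f$ (one adjacent on $f$ to a $U_{n-1}$-vertex, hence shared with a neighbor of $f$ in $F_n$). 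In the half-counting convention of \Cref{fig:offspring}, the first two categories contribute weight $1$ per face to $\Omega(f)$ and the third contributes weight $\tfrac12$.

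For a face with $j$ consecutive $U_{n-1}$-vertices (so $j = 1, 2, 3$ for $\wft{i}, \bft{i}, \nft{i}$ respectively), the remaining $k-j$ vertices lie consecutively in $U_n$ with two corners at the ends and $k-j-2$ interior vertices, yielding $k-j-1$ edges of $f$ in $\langle U_n\rangle$. Uniform concentricity combined with the monomorphic structure forces each interior vertex $v_l$ (valence $p_l$) to have no $U_{n-1}$-neighbor and each corner vertex $v$ (valence $p_v$) to have exactly one $U_{n-1}$-neighbor (the one on $f$). Consequently the $F_{n+1}$-faces at $v_l$ consist of two bricks along the edges of $f$ incident with $v_l$ and $p_l - 3$ wedges, while those at $v$ consist of two bricks (one on $f$, one on the adjacent $F_n$-face) and $p_v - 4$ wedges. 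Assembling the three contributions,
\[
\Omega(f) = (k-j-1) + \sum_{l \text{ interior}}(p_l - 3) + \tfrac12\bigl((p_{c_1} - 4) + (p_{c_2} - 4)\bigr),
\]
where $p_{c_1}, p_{c_2}$ are the valences of the two corners of $f$.

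To conclude, I would rewrite the interior sum as $\sum_{l\notin I}p_l - 3(k-j-2)$, where $I$ is the union of the $j$ indices of $U_{n-1}$-vertices and the two corner indices, and then collect the constant terms into $-2k + 2j + 1$. Substituting $j = 1$ with $I = I_1$ and corners $p_{i-2}, p_i$ yields \eqref{eqn:WedgeNumOff}; $j = 2$ with $I = I_2$ and corners $p_{i-3}, p_i$ yields \eqref{eqn:BrickNumOff}; and $j = 3$ with $I = I_3$ and corners $p_{i-3}, p_{i+1}$ yields \eqref{eqn:N-BrickNumOff}. The main obstacle is the structural claim about the $U_{n-1}$-neighbors of corner and interior vertices; once this is established by a local case analysis leveraging uniform concentricity and the prescribed configuration around each $U_n$-vertex dictated by the monomorphic valence sequence, the rest of the argument reduces to the routine arithmetic above.
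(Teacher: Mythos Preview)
Your approach is essentially the same as the paper's: count bricks along the $U_n$-edges of $f$ (weight $1$), wedges at interior $U_n$-vertices of $f$ (weight $1$), and wedges at the two corner $U_n$-vertices (weight $\tfrac12$), then simplify. Your unified parametrization by $j\in\{1,2,3\}$ is a clean way to treat all three face types simultaneously, whereas the paper writes out the wedge case and declares the others analogous.

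The one point you skate over, and which the paper handles explicitly, is when a corner vertex has valence $3$ (which can occur in $\mathscr{G}_{3+,5}\cup\mathscr{G}_{3,6}$). Your assertion that the $F_{n+1}$-faces at a corner $v$ consist of ``two bricks and $p_v-4$ wedges'' is then false: the unique $F_{n+1}$-face at such a $v$ is a notched brick shared with the adjacent $F_n$-face, and the edge of $f$ from $v$ into $U_n$ carries no separate brick. The paper verifies directly that this degenerate corner contributes $\tfrac12$ (half a notched brick) plus only $k-3$ bricks in total, and that this agrees with the generic formula since $\tfrac12(p_v-4)+1=\tfrac12(3-4)+1=\tfrac12$. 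So your displayed formula for $\Omega(f)$ is correct even in this case, but your trichotomy of offspring types needs to admit notched bricks as well; the ``local case analysis'' you defer must include this check.
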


\begin{proof}
The reader is referred to the three offspring diagrams shown in \Cref{fig:UAUC-offspring}.

%% FIGURE %%%%%%%%%%%%%%%%%%%%%%%%%%%%%%%%%%%%%%%%%
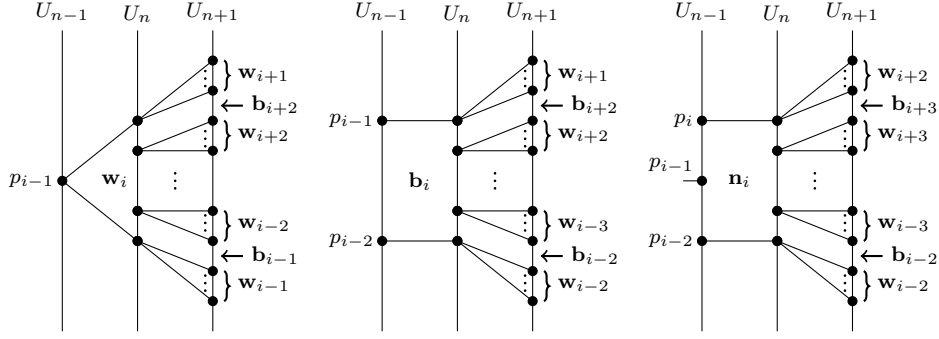
\begin{figure}[htbp]
{\footnotesize
\newcommand{\cs}{1.00}
\newcommand{\bs}{4.25}
\begin{tikzpicture}
[
    bvert/.pic = {\draw [fill=black] (0,0) circle [radius=0.06];},
    wvert/.pic = {\draw [fill=white] (0,0) circle [radius=0.05];},
    vell/.pic = {\foreach \y in {-1,0,1} \draw [fill=black] (0,\y/10) circle [radius=.01];},
]
%edges
\foreach \x in {0,1,2}{
    \foreach \y in {0,1,2}{
        \draw (\bs*\x+\cs*\y,0) -- (\bs*\x+\cs*\y,4);
        }
    \foreach \a in {-1,1}{
        \foreach \b in {0,1}{
            \foreach \c in {0,1}{
                \draw (\bs*\x+\cs,{2+\a*(.4+.4*\b)}) -- (\bs*\x+2*\cs, {2+\a*.4*(1+2*\b+\c)});
                \draw (\bs*\x+\cs,{2+\a*(.4+.4*\b)}) pic {bvert} (\bs*\x+2*\cs, {2+\a*.4*(1+2*\b+\c)}) pic {bvert};
                }
            \draw (\bs*\x+1.9*\cs, {2+\a*.4*(1.4+2*\b)}) pic {vell};
            }
        }
    \node [above] at (\bs*\x,4) {$U_{n-1}$};
    \node [above] at (\bs*\x+\cs,4) {$U_{n}$};
    \node [above] at (\bs*\x+2*\cs,4) {$U_{n+1}$};
    \draw (\x*\bs+1.5*\cs,2) pic {vell};
    }
\foreach \x in {-.8,.8}{
    \draw (0,2) -- (\cs,2+\x)
        (\bs,2+\x) -- (\bs+\cs,2+\x)
        (2*\bs,2+\x) -- (2*\bs+\cs,2+\x);
    }
\draw (2*\bs-\cs/4,2) -- (2*\bs,2);
%vertices
\draw (0,2) pic {bvert};
\draw (2*\bs,2) pic {bvert};
\foreach \a in {-.8,.8}{\foreach \b in {1,2}{
    \draw (\b*\bs,2+\a) pic {bvert};
    }}
\node at (.7*\cs,2) {$\wft{i}$};
\node at (\bs+.5*\cs,2) {$\bft{i}$};
\node at (2*\bs+.5*\cs,2) {$\nft{i}$};
\node [left] at (0,2) {$p_{i-1}$};
\node [left] at (\bs,2.8) {$p_{i-1}$};
\node [left] at (\bs,1.2) {$p_{i-2}$};
\node [left] at (2*\bs,2.8) {$p_i$};
\node [above left] at (2*\bs,2) {$p_{i-1}$};
\node [left] at (2*\bs,1.2) {$p_{i-2}$};
\draw [thick, decorate, decoration={brace, amplitude=2pt}] 
    (2*\cs+.15,3.6) -- (2*\cs+.15,3.2) node [midway, right=2pt] {$\wft{i+1}$};
\draw [thick, decorate, decoration={brace, amplitude=2pt}] 
    (2*\cs+.15,2.8) -- (2*\cs+.15,2.4) node [midway, right=2pt] {$\wft{i+2}$};
\draw [thick, decorate, decoration={brace, amplitude=2pt}] 
    (2*\cs+.15,1.6) -- (2*\cs+.15,1.2) node [midway, right=2pt] {$\wft{i-2}$};
\draw [thick, decorate, decoration={brace, amplitude=2pt}] 
    (2*\cs+.15,0.8) -- (2*\cs+.15,0.4) node [midway, right=2pt] {$\wft{i-1}$};
\node [minimum size=.05] (marked) at (2*\cs,3) {};
\draw [->, thick] (2*\cs+.4,3) node [right] {$\bft{i+2}$} -- (marked);
\node [minimum size=.05] (marked) at (2*\cs,1) {};
\draw [->, thick] (2*\cs+.4,1) node [right] {$\bft{i-1}$} -- (marked);
\draw [thick, decorate, decoration={brace, amplitude=2pt}] 
    (\bs+2*\cs+.15,3.6) -- (\bs+2*\cs+.15,3.2) node [midway, right=2pt] {$\wft{i+1}$};
\draw [thick, decorate, decoration={brace, amplitude=2pt}] 
    (\bs+2*\cs+.15,2.8) -- (\bs+2*\cs+.15,2.4) node [midway, right=2pt] {$\wft{i+2}$};
\draw [thick, decorate, decoration={brace, amplitude=2pt}] 
    (\bs+2*\cs+.15,1.6) -- (\bs+2*\cs+.15,1.2) node [midway, right=2pt] {$\wft{i-3}$};
\draw [thick, decorate, decoration={brace, amplitude=2pt}] 
    (\bs+2*\cs+.15,0.8) -- (\bs+2*\cs+.15,0.4) node [midway, right=2pt] {$\wft{i-2}$};
\node [minimum size=.05] (marked) at (\bs+2*\cs,3) {};
\draw [->, thick] (\bs+2*\cs+.4,3) node [right] {$\bft{i+2}$} -- (marked);
\node [minimum size=.05] (marked) at (\bs+2*\cs,1) {};
\draw [->, thick] (\bs+2*\cs+.4,1) node [right] {$\bft{i-2}$} -- (marked);
\draw [thick, decorate, decoration={brace, amplitude=2pt}] 
    (2*\bs+2*\cs+.15,3.6) -- (2*\bs+2*\cs+.15,3.2) node [midway, right=2pt] {$\wft{i+2}$};
\draw [thick, decorate, decoration={brace, amplitude=2pt}] 
    (2*\bs+2*\cs+.15,2.8) -- (2*\bs+2*\cs+.15,2.4) node [midway, right=2pt] {$\wft{i+3}$};
\draw [thick, decorate, decoration={brace, amplitude=2pt}] 
    (2*\bs+2*\cs+.15,1.6) -- (2*\bs+2*\cs+.15,1.2) node [midway, right=2pt] {$\wft{i-3}$};
\draw [thick, decorate, decoration={brace, amplitude=2pt}]  
    (2*\bs+2*\cs+.15,0.8) -- (2*\bs+2*\cs+.15,0.4) node [midway, right=2pt] {$\wft{i-2}$};
\node [minimum size=.05] (marked) at (2*\bs+2*\cs,3) {};
\draw [->, thick] (2*\bs+2*\cs+.4,3) node [right] {$\bft{i+3}$} -- (marked);
\node [minimum size=.05] (marked) at (2*\bs+2*\cs,1) {};
\draw [->, thick] (2*\bs+2*\cs+.4,1) node [right] {$\bft{i-2}$} -- (marked);
\end{tikzpicture}
}
\caption[Offspring diagrams for general faces.]{\label{fig:UAUC-offspring} Offspring diagrams for the three general face types (respectively wedges, bricks, and notched bricks) of a tessellation with monomorphic, uniformly concentric valence sequence $[p_0,\ldots,p_{k-1}]$.}
\end{figure}
%%%%%%%%%%%%%%%%%%%%%%%%%%%%%%%%%%%%%

Letting $i\in\{1,\ldots,k\}$, the first diagram applies when $p_{i-1}\geq4.$  If also $p_{i-2},p_i\geq4$ as  in the diagram, then we have
\begin{align*}
    \Omega(\wft{i}) &= \frac{p_{i-2}-4}2+\frac{p_i-4}2+k-2+\sum_{j\notin I_1}(p_j-3) \\
        &= \frac{p_{i-2}+p_i}2 - 2k+3+\sum_{j\notin I_1}p_j.
\end{align*}
If instead $p_{i-2}=3$, then the number of wedge offspring of $\wft{i}$ is 
\[\frac{p_i-4}2+\sum_{j\notin I_1}(p_j-3),\]
the number of brick offspring is $k-3$, and the number of notched brick offspring is $\frac12$. Thus
when $p_{i-2}=3$, 
\begin{align*}
\Omega(\wft{i}) &= \frac12 + \frac{p_i-4}2+k-3+\sum_{j\notin I_1}(p_j-3) \\
    &= -\frac12 + \frac{p_i-4}2 +k-2+\sum_{j\notin I_1}(p_j-3) \\
    &= \frac{p_{i-2}-4}2+\frac{p_i-4}2+k-2+\sum_{j\notin I_1}(p_j-3)
\end{align*}  as before; likewise when $p_i=3$. Analogous arguments hold for the offspring of bricks and notched bricks.    
\end{proof}

The process of establishing an accretion rule and accompanying transition matrices is considerably simplified for tessellations in $\mathscr{G}_{4,4}$ by virtue of the absence of notched bricks.   By applying the following lemma and \Cref{thm:Eigenvalue}, one can then compute the growth rate explicitly of any monomorphic valence sequence realizable in $\mathscr{G}_{4,4}$.  Recall that by \Cref{prop:UnifConc}, all such valence sequences are uniformly concentric.
 
%%%%This is Lemma 5.8 in the thesis.
\begin{lem}\label{lem:UAUC-matrix}\label{thm:UAUC-matrix} Let $[p_0,\ldots,p_{k-1}]$ be the  monomorphic valence sequence for a tessellation $T\in\mathscr{G}_{4,4}$. Then $T$ has an accretion rule which admits the block transition matrix
\[ M= \begin{bmatrix}A & B \\ C & D \end{bmatrix}, \]
with $A=(a_{i,j})$, $B=(b_{i,j})$, $C=(c_{i,j})$, and $D=(d_{i,j})$ given by

\begin{align}
a_{i,j} &= \begin{cases} 
    \begin{minipage}{2cm}$0$\end{minipage} & j-i = 0 \\ 
    \frac12(p_{i-1}-4) & j-i\in\{1,k-1\} \pmod{k} \\ 
    p_{i-1} - 3 & \text{otherwise},
\end{cases} \\
b_{i,j} &= \begin{cases}
    \begin{minipage}{2cm}$0$\end{minipage} & j-i \in\{0,1\} \pmod{k} \\
    \frac12(p_{i-1}-4) & j-i \in \{2,k-1\} \pmod{k} \\
    p_{i-1}-3 & \text{otherwise},
\end{cases}\\
c_{i,j} &= \begin{cases}
    \begin{minipage}{2cm}$0$\end{minipage} & j-i \in \{0,1\} \pmod{k}\\
    1 & \text{otherwise},
\end{cases}\\
d_{i,j} &= \begin{cases}
    \begin{minipage}{2cm}$0$\end{minipage} & j-i \in \{0,1,k-1\} \pmod{k} \\
    1 & \text{otherwise},
\end{cases}
\end{align} for $i,j\in\{1,\ldots,k\}$.
\begin{proof} Since all general face types are wedges or bricks, we need demonstrate only that the entries $a_{i,j}$ and $c_{i,j}$ correspond to numbers of offspring of the $k$ face types in wedge configurations and that the entries $b_{i,j}$ and $d_{i,j}$ correspond to numbers of offspring of the $k$ face types in brick configurations.

%% FIGURE %%%%%%%%%%%%%%%%%%%%%%%%%%%%%%%%%%%%%%%%%
%\input{figures/fig_UAUC-g44matrix-wedge.tex}
\begin{figure}[htbp]
\[
\newcommand{\cs}{1.5}
\newcommand{\bs}{4.25}
\begin{tikzpicture}
[
    bvert/.pic = {\draw [fill=black] (0,0) circle [radius=0.06];},
    wvert/.pic = {\draw [fill=white] (0,0) circle [radius=0.06];},
    vell/.pic = {\foreach \y in {-1,0,1} \draw [fill=black] (0,\y/10) circle [radius=.01];},
]
\foreach \x in {0,1,2}{
    \draw (\x*\cs,0) -- (\x*\cs,4);
    }
\foreach \a in {-1,1}{
    \foreach \b in {1,2}{
        \draw (\cs,2+\a*4/7) pic {bvert} -- (2*\cs, 2+\a*\b*2/7) pic {bvert};
        }
    \foreach \b in {3,4,5,6}{
        \draw (\cs, 2+8*\a/7) pic {bvert} -- (2*\cs,2+\a*\b*2/7) pic {bvert};
        }
    }
\draw (0,2) pic {bvert} -- (\cs, 22/7)
    (0,2) -- (\cs, 6/7);
\draw [thick, decorate, decoration={brace, amplitude=4pt}] 
    (2*\cs+.15, 26/7) -- (2*\cs+.15, 20/7) node [midway, right=2pt] {$\frac12(p_i-4)$ faces of type $\wft{i+1}$};
\draw [thick, decorate, decoration={brace, amplitude=2pt}] 
    (2*\cs+.15, 18/7) -- (2*\cs+.15, 16/7) node [midway, right=2pt] {$p_{i+1}-3$ faces of type $\wft{i+2}$};
\draw [thick, decorate, decoration={brace, amplitude=2pt}] 
    (2*\cs+.15, 12/7) -- (2*\cs+.15, 10/7) node [midway, right=2pt] {$p_{i-3}-3$ faces of type $\wft{i-2}$};
\draw [thick, decorate, decoration={brace, amplitude=4pt}] 
    (2*\cs+.15, 8/7) -- (2*\cs+.15, 2/7) node [midway, right=4pt] {$\frac12(p_{i-2}-4)$ faces of type $\wft{i-1}$};
\draw (\cs+.3,2) pic {vell} (2*\cs+.3,2) pic {vell};
\node [left] at (0,2) {$p_{i-1}$};
\node [below left] at (\cs,6/7) {$p_{i-2}$};
\node [above left] at (\cs,22/7) {$p_i$};
\node at (2*\cs/4,2) {$\wft{i}$};
\node [above] at (0,4) {$U_{n-1}$};
\node [above] at (\cs,4) {$U_{n}$};
\node [above] at (2*\cs,4) {$U_{n+1}$};
\end{tikzpicture}
\]
\caption[Offspring of a wedge in $T\in\scr{G}_{4,4}$.]{\label{fig:UAUC-g44matrix-wedge}Offspring of a $\wft{i}$ face in a tessellation ${T\in\scr{G}_{4,4}}$, where $i\in\{1,\ldots,k\}$.}
\end{figure}

%%%%%%%%%%%%%%%%%%%%%%%%%%%%%%%%
The offspring of wedges of type $\wft{i}$ are shown in \Cref{fig:UAUC-g44matrix-wedge}, and the offspring of a brick of type $\bft{i}$ is shown in \Cref{fig:UAUC-g44matrix-brick}.
%% FIGURE %%%%%%%%%%%%%%%%%%%%%%%%%%%%%%%%%%%%%%%%%
%\input{figures/fig_UAUC-g44matrix-brick.tex}
\begin{figure}[htbp]
\[
\newcommand{\cs}{1.5}
\newcommand{\bs}{4.25}
\begin{tikzpicture}
[
    bvert/.pic = {\draw [fill=black] (0,0) circle [radius=0.06];},
    wvert/.pic = {\draw [fill=white] (0,0) circle [radius=0.06];},
    vell/.pic = {\foreach \y in {-1,0,1} \draw [fill=black] (0,\y/10) circle [radius=.01];},
]
\foreach \x in {0,1,2}{
    \draw (\x*\cs,0) -- (\x*\cs,4);
    }
\foreach \a in {-1,1}{
    \draw (0, 2+\a*8/7) pic {bvert} -- (\cs, 2+\a*8/7);
    \foreach \b in {1,2}{
        \draw (\cs,2+\a*4/7) pic {bvert} -- (2*\cs, 2+\a*\b*2/7) pic {bvert};
        }
    \foreach \b in {3,4,5,6}{
        \draw (\cs, 2+8*\a/7) pic {bvert} -- (2*\cs,2+\a*\b*2/7) pic {bvert};
        }
    }
\draw [thick, decorate, decoration={brace, amplitude=4pt}] 
    (2*\cs+.15, 26/7) -- (2*\cs+.15, 20/7) node [midway, right=2pt] {$\frac12(p_i-4)$ faces of type $\wft{i+1}$};
\draw [thick, decorate, decoration={brace, amplitude=2pt}] 
    (2*\cs+.15, 18/7) -- (2*\cs+.15, 16/7) node [midway, right=2pt] {$p_{i+1}-3$ faces of type $\wft{i+2}$};
\draw [thick, decorate, decoration={brace, amplitude=2pt}] 
    (2*\cs+.15, 12/7) -- (2*\cs+.15, 10/7) node [midway, right=2pt] {$p_{i-4}-3$ faces of type $\wft{i-3}$};
\draw [thick, decorate, decoration={brace, amplitude=4pt}] 
    (2*\cs+.15, 8/7) -- (2*\cs+.15, 2/7) node [midway, right=4pt] {$\frac12(p_{i-3}-4)$ faces of type $\wft{i-2}$};
\draw (\cs+.3,2) pic {vell} (2*\cs+.3,2) pic {vell};
% labels
%\node [left] at (0,2) {$p_{i-1}$};
\node [left] at (0, 22/7) {$p_{i-1}$};
\node [left] at (0, 6/7) {$p_{i-2}$};
\node [below left] at (\cs,6/7) {$p_{i-3}$};
\node [above left] at (\cs,22/7) {$p_i$};
\node at (2*\cs/4,2) {$\bft{i}$};
\node [above] at (0,4) {$U_{n-1}$};
\node [above] at (\cs,4) {$U_{n}$};
\node [above] at (2*\cs,4) {$U_{n+1}$};
\end{tikzpicture}
\]
\caption[Offspring of a brick in $T\in\scr{G}_{4,4}$.]{\label{fig:UAUC-g44matrix-brick}Offspring of a $\bft{i}$ face in a tessellation ${T\in\scr{G}_{4,4}}$, where $i\in\{1,\ldots,k\}$.}
\end{figure}
%%%%%%%%%%%%%%%%%%%%%%%%%%%%%%%%
The ordering of face types is {$\wft{1}$,\,$\wft{2}$,\,$\ldots$,\,$\wft{k}$,\,$\bft{1}$,\,$\ldots$,\,$\bft{k}$}.
Recalling that the $(i,j)$-entry of a transition matrix $M$ is the number of faces of the $i^\text{th}$ indexed type which are produced in $F_{n+1}$ as offspring of a face of the $j^\text{th}$ indexed type in $F_n$, it is straightforward to verify from these two offspring diagrams that the entries of $M$ are correct.
\qquad\end{proof}
\end{lem}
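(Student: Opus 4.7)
The plan is to reduce the verification to reading off entries from two offspring diagrams. Since $T\in\mathscr{G}_{4,4}$, \Cref{prop:UnifConc} guarantees that $T$ is uniformly concentric and that in every Bilinski diagram of $T$ each face in $F_n$ is incident with at most two edges in $\langle U_{n-1}\rangle$. Because $T$ has no $3$-valent vertex, no notched brick can occur, so every face of $T$ (regardless of the Bilinski labeling) is a wedge or a brick. Since $\sigma$ is monomorphic and we operate in the stricter labeling regime in which all $p_i$ are distinct, the face types present in any corona are exactly $\wft{1},\ldots,\wft{k},\bft{1},\ldots,\bft{k}$, which gives $M$ the $2k\times 2k$ block structure claimed.

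It then remains to identify the four blocks. I would draw offspring diagrams for a general $\wft{i}$ and a general $\bft{i}$, as in Figures~\ref{fig:UAUC-g44matrix-wedge} and \ref{fig:UAUC-g44matrix-brick}. Reading the wedge diagram: traverse the $k-1$ vertices of the parent face that lie in $U_n$ in the cyclic order of $\sigma$. At each of the two \emph{endpoint} $U_n$-vertices (those adjacent to the $U_{n-1}$-vertex), of some valence $p_\ell$, one places $\frac{1}{2}(p_\ell-4)$ half-counted wedge offspring; at each of the $k-3$ \emph{interior} $U_n$-vertices, of some valence $p_m$, one places $p_m-3$ full wedge offspring; and each of the $k-2$ consecutive $U_n$-edges bordering the parent face carries exactly one brick offspring, whose face type is determined by the valences of its two $U_n$-endpoints. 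For a brick parent the same recipe applies, except that $\bft{i}$ has $k-2$ vertices in $U_n$ rather than $k-1$, so the count becomes two endpoint half-wedge groups, $k-4$ interior wedge groups, and $k-3$ bricks. Translating each position in the valence sequence to the corresponding wedge or brick index $j$ then produces the claimed entries $a_{i,j}$, $b_{i,j}$, $c_{i,j}$, $d_{i,j}$.

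The main obstacle is combinatorial bookkeeping modulo $k$: matching the half-count positions (which contribute $\frac{1}{2}(p_{i-1}-4)$) and the full-count interior positions (which contribute $p_{i-1}-3$) to the index offsets $j-i\in\{1,k-1\}$ for $A$ and $j-i\in\{2,k-1\}$ for $B$, and correctly placing the additional forbidden offspring slots in $B$ and $D$ that account for the fact that a brick parent occupies an extra $U_{n-1}$-position relative to a wedge parent. A convenient consistency check is that summing the $i$-th column of $M$ reproduces the expressions for $\Omega(\wft{i})$ and $\Omega(\bft{i})$ furnished by \Cref{thm:NumOffs}, which is essentially the calculation already performed in the proof of that lemma; this sanity-checks both the half-counting convention and the index arithmetic simultaneously.
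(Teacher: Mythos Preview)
Your proposal is correct and follows essentially the same approach as the paper's own proof: both observe that membership in $\mathscr{G}_{4,4}$ eliminates notched bricks, then read the block entries directly from the wedge and brick offspring diagrams under the ordering $\wft{1},\ldots,\wft{k},\bft{1},\ldots,\bft{k}$. Your write-up is in fact more explicit than the paper's (which simply asserts that the verification from the diagrams is ``straightforward''), and the column-sum consistency check against \Cref{thm:NumOffs} is a helpful addition the paper does not mention.
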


\begin{rem}\label{rem:MonoUCsumup}
We emphasize the breadth of this class of monomorphic, uniformly concentric valence sequences.  In addition to the many monomorphic face-homogeneous  tessellations in $\mathscr{G}_{3,6}\cup\mathscr{G}_{3+,5}\cup\mathscr{G}_{4,4}$, there are many with covalence 3 (cf. \Cref{coval3}).
By \Cref{thm:UniqueEdgeSymbol}, all edge-transitive tessellations of constant covalence are included, except for those of the with valence sequence $[3,p,3,p]$ (edge-symbol $\langle 3,p;4,4\rangle$), as they are not uniformly concentric.  By \Cref{prop:UnifConc}, a $k$-covalent tessellation $T$ is uniformly concentric whenever $k\geq6$.  If $k\geq7$ and if $\sigma$ is monomorphic, then $\sigma\geq[3,3,3,3,3,3,3]$.  In that case, \Cref{thm:GrowthOrder} and \Cref{thm:EdgeGrowth} tell us that $\sigma$ has growth rate at least $\gamma([3,3,3,3,3,3,3])=\frac12(3+\sqrt{5})$.
\end{rem}
\medskip

%%Subsection 2.3
\subsection{Monomorphic Non-Concentric Sequences}\label{sec:MonoNCVS}

The purpose of this  section is to characterize the six forms of monomorphic, non-concentric valence sequences with positive angle excess.  These sequences give rise to face types other than wedges, bricks, and notched bricks, and so the foregoing methods cannot be applied to compute their growth rates. 

An interesting situation arises when a tessellation is not \emph{uniformly} concentric but nonetheless, by prudent selection of the root, admits \emph{some} Bilinski diagram that \emph{is} concentric.  To illustrate this point, we examine sequences of the form $\vs{4,p,q}$.

\begin{exmp}\label{example:GrowthOf-4pq}
Consider the valence sequence $\sigma=[4,p,q]$ with $4<p<q$, where $\frac1p+\frac1q<\frac14$, and let $T$ be a face-homogeneous tessellation with valence sequence $\sigma$.  For $\sigma$ to be realizable, clearly $p$ and $q$ must be even.  Note as well that the inequality \eqref{eq:HyperbolicCondition} is satisfied.  While $\sigma$ is monomorphic and admits a concentric Bilinski diagram, $\sigma$ is not uniformly concentric
(cf. the second case of \Cref{prop:UnifConcForbidden}).

When a Bilinski diagram of $T$ admits a $4$-valent vertex $v_0\in U_n$ (for some $n$) adjacent to the vertices $u_1,u_2\in U_{n-1}$ and $v_1,v_2\in U_n$, then the diagram is not concentric; the vertices $v_1$ and $v_2$ must also be adjacent, as $T$ is $3$-covalent. Hence $\langle\{v_0,v_1,v_2\}\rangle$ is a cycle within $\langle U_n\rangle$, causing the Bilinski diagram to be non-concentric.  However, it is possible to avoid this configuration by choosing the root of the Bilinski diagram to be either a $p$-valent or a $q$-valent vertex. When so labeled, only four face types occur, as demonstrated by the offspring diagrams in \Cref{fig:4pq-offspring}.

\begin{figure}[htbp]

\[\newcommand{\cs}{1.5}\newcommand{\bs}{4.5}\newcommand{\hyt}{3}
\begin{tikzpicture}
[
    bvert/.pic = {\filldraw [draw=black, fill=black] (0,0) circle [radius=0.06];},
    wvert/.pic = {\filldraw [draw=black, fill=white] (0,0) circle [radius=0.06];},
    sqvert/.pic = {\newcommand{\s}{.09}\draw [fill=white] (\s,\s) -- (-\s,\s) -- (-\s,-\s) -- (\s,-\s) -- (\s,\s);},
    vell/.pic = {\foreach \y in {-1,0,1} \draw [fill=black] (0,\y/10) circle [radius=.01];},
]
\foreach \a in {0,1}{
    \foreach \b in {0,1,2}{
        \draw (\a*\bs+\b*\cs,0) -- (\a*\bs+\b*\cs,\hyt);
        }
    \node [above] at (\a*\bs,\hyt) {$U_{n-1}$};
    \node [above] at (\a*\bs+\cs,\hyt) {$U_n$};
    \node [above] at (\a*\bs+2*\cs,\hyt) {$U_{n+1}$};    
    }
\draw (\cs,2*\hyt/3) -- (0,\hyt/3) -- (\cs,\hyt/16) -- (2*\cs,\hyt/16);
\foreach \a in {2,9,16,30}{
    \draw (\cs,2*\hyt/3) -- (2*\cs,\a/32*\hyt);
    }    
\draw (\bs+\cs,\hyt-2*\hyt/3) -- (\bs+0,\hyt-\hyt/3) -- (\bs+\cs,\hyt-\hyt/16) -- (\bs+2*\cs,\hyt-\hyt/16);
\foreach \a in {2,9,16,30}{
    \draw (\bs+\cs,\hyt-2*\hyt/3) -- (\bs+2*\cs,\hyt-\a/32*\hyt);
    }

\draw (0,\hyt/3) pic {wvert} (\cs,\hyt/16) pic {bvert} (\cs,2*\hyt/3) pic {sqvert} (2*\cs,\hyt/16) pic {wvert} 
    (2*\cs,\hyt*9/32) pic {bvert} (2*\cs,\hyt/2) pic {wvert} (2*\cs,15/16*\hyt) pic {wvert};
\draw (2*\cs-.3, 23*\hyt/32) pic {vell};
\node at (.7*\cs,\hyt/3) {$\ft_{1}$};
\node at (1.3*\cs,19*\hyt/96) {$\ft_3$};
\draw [thick, decorate, decoration={brace, amplitude=4pt}]
    (.15 + 2*\cs,15*\hyt/16) -- (.15 + 2*\cs,\hyt/16) node [midway,right=3pt] {$\ft_2$};

\draw (\bs,2*\hyt/3) pic {sqvert} (\bs+\cs,\hyt/3) pic {wvert} (\bs+\cs,15*\hyt/16) pic {bvert}
    (\bs+2*\cs,\hyt/16) pic {sqvert} (\bs+2*\cs,\hyt/2) pic {sqvert} (\bs+2*\cs,23*\hyt/32) pic {bvert}
    (\bs+2*\cs,15*\hyt/16) pic {sqvert};
\draw (\bs + 2*\cs-.3, 9*\hyt/32) pic {vell};
\node at (\bs + .7*\cs,2*\hyt/3) {$\ft_{2}$};
\node at (\bs + 1.3*\cs,77*\hyt/96) {$\ft_4$};
\draw [thick, decorate, decoration={brace, amplitude=4pt}]
    (\bs + .15 + 2*\cs,15*\hyt/16) -- (\bs + .15 + 2*\cs,\hyt/16) node [midway,right=3pt] {$\ft_1$};

\draw (2*\bs,\hyt/2+.5) pic {bvert}; \node [right] at (2*\bs,\hyt/2+.5) {: $4$-valent};
\draw (2*\bs,\hyt/2) pic {wvert}; \node [right] at (2*\bs,\hyt/2) {: $p$-valent};
\draw (2*\bs,\hyt/2-.5) pic {sqvert}; \node [right] at (2*\bs,\hyt/2-.5) {: $q$-valent};
\end{tikzpicture}
\]
\caption{\label{fig:4pq-offspring}Offspring diagrams for a concentric tessellation with valence sequence $\vs{4,p,q}$.}
\end{figure}
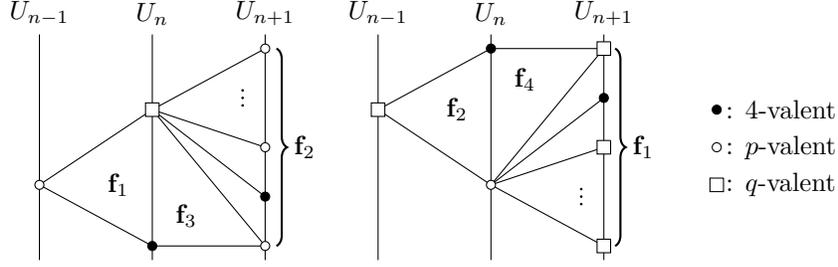
%%%%%%%%%%%%%%%%%%%%%%%%%%%%%%%%

One sees here that if the root is taken to be a $p$-valent vertex, the first corona consists entirely of faces of type $\ft_1$, which produce in turn only offspring of types $\ft_2$ and $\ft_3$.  Similarly, given a $q$-valent root, the first corona consists entirely of faces of type $\ft_2$, which produce in turn only offspring of types $\ft_1$ and $\ft_4$.  The non-concentric configuration described above can never be produced among the descendants of faces of types $\ft_1$ or $\ft_2$.

Inspection of \Cref{fig:4pq-offspring} gives the first and second columns of the transition matrix $M$; the third and fourth columns, corresponding to $\ft_3$ and $\ft_4$, merit further explanation. A face of type $\ft_3$  in $F_{n+1}$ has a $p$-valent vertex in $U_{n+1}$; this vertex is incident with $p-5$ faces of type $\ft_1$ in $F_{n+2}$. So the behavior of a face of type $\ft_3$ is effectively to collapse one of the faces in $U_{n+2}$ of type $\ft_1$ begotten by the adjacent face of type $\ft_2$. Faces of type $\ft_4$ behave similarly, collapsing a face of type $\ft_2$ . These considerations give us 
\[ M = \begin{bmatrix}
0 & \frac12(p-4) & -1 & 0 \\
\frac12(q-4) & 0 & 0 & -1 \\
1 & 0 & 0 & 0 \\
0 & 1 & 0 & 0
\end{bmatrix} \]
as the transition matrix $M$ for this accretion rule for $T$.  As the characteristic equation for $M$ is of degree 4, it can be solved to determine that the maximum modulus of an eigenvalue of $M$ is
\[ \Lambda = \frac14\sqrt{[2(p-4)(q-4)-16]+2\sqrt{(p-4)^2(q-4)^2-16(p-4)(q-4)}}. \]
By \Cref{thm:Eigenvalue} and \Cref{thm:InvariantGrowth}, $\Lambda$ is the growth rate of $T$.  This quantity can be minimized by minimizing $pq$ subject to the initial conditions $\frac1p+\frac1q<\frac14$ and that $p$ and $q$ be even.  We shall see in \Cref{sec:MainResult} the role played by this example.
\end{exmp}

 Growth rate formulas for each of the other five classes are derived in the Appendix.
\begin{thm}\label{thm:UC-classify} Let $\sigma$ be a valence sequence such that  $\eta(\sigma)>0$.  Then $\sigma$ is both monomorphic and non-concentric if and only if $\sigma$ is of one of the following six forms:
\begin{itemize}
\item $\vs{3,p,p}$, with $p\geq14$ and even;
\item $\vs{4,p,q}$, with $p$ and $q$ both even, $4< p<q$, and $\frac1p+\frac1q<\frac14$;
\item $\vs{3,p,3,p}$, with $p\geq 7$;
\item $\vs{3,p,4,p}$, with $p\geq 5$ and even;
\item $\vs{3,3,p,3,p}$, with $p\geq 5$; or
\item $\vs{3,3,p,3,q}$, with $p,q\geq 4$ and $\frac1p+\frac1q<\frac12$.
\end{itemize}
\end{thm}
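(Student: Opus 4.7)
The plan is to establish the biconditional in two passes. The forward direction fixes each of the six listed families in turn and verifies positive angle excess, realizability and monomorphism, and non-uniform-concentricity. The reverse direction classifies all cyclic sequences with $\eta(\sigma)>0$ that escape the uniform-concentricity criteria of \Cref{prop:UnifConc}, and then rules out the polymorphic ones.

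For the forward direction, the parameter conditions stated in each family turn out to be exactly those equivalent to $\eta(\sigma)>0$ once the parities forced by realizability are imposed. For example, $\eta([3,p,p])=\tfrac{1}{3}-\tfrac{4}{p}>0$ gives $p\geq 13$, while realizability at a $p$-valent vertex, whose incident-face neighbors must alternate between $3$-valent and $p$-valent, forces $p$ even, yielding $p\geq 14$. Analogous calculations handle the other five cases. Monomorphism is immediate in the length-3 cases from \Cref{coval3}; for the length-4 and length-5 families I would argue monomorphism by induction on the corona index, using that at every $3$-valent vertex the three incident faces are determined by cyclic symmetry, and at each higher-valent vertex the alternation of neighbor-valences (forced by every incident face having the same valence sequence) leaves no freedom.

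Non-uniform-concentricity is witnessed as follows. For $[3,p,p]$, $[4,p,q]$, and $[3,p,3,p]$ one reads it directly off items (1), (2), and (3) respectively of \Cref{prop:UnifConcForbidden}: the two triangular families contain a $3$-covalent face incident with a $3$- or $4$-valent vertex, and the first quadrilateral family has a $4$-covalent face with two diagonally opposite, and hence nonadjacent, $3$-valent vertices. For $[3,p,4,p]$ and the two length-$5$ families, \Cref{prop:UnifConcForbidden} does not apply to a single face, so one exhibits a non-concentric Bilinski diagram explicitly: with an appropriate root, one traces out the first two or three coronas and locates a vertex of $U_n$ whose local configuration forces a chord in $\langle U_n\rangle$, that is, an edge joining two vertices of $U_n$ that are not neighbors along the boundary cycle of the corona.

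For the reverse direction, \Cref{prop:UnifConc} immediately restricts attention to sequences not realized in $\bigmonouc$: the covalence $k$ must satisfy $k\leq 5$, with at least one $3$ when $k=4$ and two adjacent $3$'s when $k=5$. For each length and pattern, I would enumerate the possible positions of $4$'s and larger entries, apply the parity constraints from realizability, impose $\eta(\sigma)>0$, and finally test monomorphism. The main obstacle, in both directions, will be the monomorphism testing: the length-$5$ families require careful local analysis at the two adjacent $3$-valent vertices to rule out alternate completions, and in the reverse direction, each excluded candidate must be certified polymorphic by exhibiting two non-isomorphic face-homogeneous tessellations realizing it---the combinatorial bulk of which is deferred to the Appendix.
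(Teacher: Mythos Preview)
Your approach is essentially the same as the paper's: verify each of the six families directly (positivity of $\eta$, monomorphism, non-concentricity via \Cref{prop:UnifConcForbidden} or an explicit bad Bilinski diagram), and defer the exhaustive reverse classification to the Appendix. The only differences are tactical: for monomorphism of $[3,p,3,p]$ the paper invokes edge-transitivity (via \Cref{thm:UniqueEdgeSymbol}) rather than corona induction, and for $[3,3,p,3,p]$ it gives a short local argument at a $p$-valent vertex showing the cyclic pattern $(3,3,p,\ldots,3,3,p)$ of neighbor-valences is forced; for non-concentricity of $[3,3,p,3,p]$ the paper roots at a $3$-valent vertex and exhibits a pendant vertex (rather than a chord) in $\langle U_3\rangle$.
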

\begin{proof}
The parity conditions and the inequalities bounding the parameters in each case are minimal such that $\sig$ be indeed realizable as a tessellation with ${\eta(\sig)>0}$.

As noted in \Cref{rem:MonoUCsumup}, all valence sequences of length at least 6 are uniformly concentric. Furthermore, by \Cref{coval3}, all valence sequences of length 3 are monomorphic. Valence sequences $\vs{3,p,p}$, $\vs{4,p,q}$, and $\vs{3,p,3,p}$ give rise to tessellations exemplifying cases 1, 2, and 4 respectively of \Cref{prop:UnifConcForbidden}, and hence cannot be uniformly concentric. As a face-homogeneous tessellation with valence sequence $\vs{3,p,3,p}$ is also edge-transitive, the sequence must be monomorphic. The proof that the sequence $\vs{3,p,4,p}$ is be monomorphic and uniformly non-concentric is given in the Appendix, where the growth rate of a corresponding tessellation is determined.  

We now prove that $[3,3,p,3,p]$ is monomorphic for all $p\geq5$.  As a $3$-valent vertex is incident with a common face with any two of its neighbors, every $3$-valent vertex must be adjacent to at least two $p$-valent vertices; otherwise some face would be incident with a $(3,3,3)$-path. Consider a $p$-valent vertex $v_1$. By face-homogeneity, $v_1$ is adjacent to some $3$-valent vertex $u_1$, with $u_1$ adjacent in turn to a $3$-valent vertex $u_2$ which is not adjacent to $v_0$. But then the other vertex adjacent to $u_1$ must be a $p$-valent vertex $v_2$. This forces the pattern of valences at regional distance 1 from $v_1$ to be $(3,3,p,\ldots,3,3,p)$; as $v_1$ was arbitrary, this must be the pattern of valences at regional distance 1 from any $p$-valent vertex. As every vertex is at regional distance 1 from some $p$-valent vertex, $\vs{3,3,p,3,p}$ must be monomorphic; the first two coronas of a tessellation with this valence sequence rooted at a $p$-valent vertex is depicted in \Cref{fig:33p3p-firstCorona}. Furthermore, this local configuration to a $p$-valent vertex forces the local behaviors to a $(3,3)$-edge and a $3$-valent vertex shown in \Cref{fig:33p3p}. Hence when a $3$-valent vertex $v_0$ is taken as the root of the Bilinski diagram of a tessellation with valence sequence $\vs{3,3,p,3,p}$, a pendant vertex occurs in $\induced{U_3}$. This is shown in \Cref{fig:33p3p-nonconc}. So $\vs{3,3,p,3,p}$ is monomorphic but not uniformly concentric; the argument for $\vs{3,3,p,3,q}$ is analogous.

We have shown these six forms to be both monomorphic and non-concentric; that these are the only such valence sequences is proved via the exhaustive examination of cases in the Appendix.
\qquad\end{proof}

%% FIGURE %%%%%%%%%%%%%%%%%%%%%%%%%%%%%%%%%%%%%%%%%
\begin{figure}[htbp]
\[
\begin{tikzpicture}
[
    bvert/.pic = {\draw [fill=black] (0,0) circle [radius=0.05];},
    wvert/.pic = {\draw [fill=white] (0,0) circle [radius=0.05];}
]
%edges
\draw (-60:1) arc [radius=1, start angle=-60, end angle=240];
\draw (-60:2) arc [radius=2, start angle=-60, end angle=240];
\foreach \a in {1,2,...,6}{
    \draw (0,0) -- ({-60*(1+\a)}:1);
    }
\foreach \a in {-1,0,1,2,3}{
    \draw ({60*\a+20}:1) -- ({60*\a+12}:2);
    \draw [fill=lightgray] ({60*\a+40}:1) -- 
        ({60*\a+36}:2) arc [radius=2, start angle={60*\a+36}, end angle={60*(1+\a)}]
        ({60*(1+\a)}:2) -- ({60*\a+40}:1);
    }
\draw (240:2) arc [radius=2, start angle=240, end angle=246];
\draw (-66:2) arc [radius=2, start angle=-66, end angle=-60];
%vertices
\draw (0,0) pic {wvert};
\foreach \a in {-1,0,1,2,3}{
    \draw ({60*\a}:1) pic {bvert}
        ({60*\a+20}:1) pic {bvert}
        ({60*\a+40}:1) pic {wvert};
    }
\draw (240:1) pic {bvert};
\draw (-60:2) pic {bvert};
\foreach \a in {-1,0,1,2,3}{
    \draw ({60*\a+12}:2) pic {wvert};
    \foreach \b in {2,3,5}{
        \draw ({60*\a+\b*12}:2) pic {bvert};
        }
}
\node at (0,-1) {$\ldots$};
\node at (0,-2) {$\ldots$};
\draw (3,.25) pic {bvert}; \node [right] at (3.1,.25) {: $3$-valent};
\draw (3,-.25) pic {wvert}; \node [right] at (3.1,-.25) {: $p$-valent};
\end{tikzpicture}
\]
\caption[First two coronas of {$[3,3,p,3,p]$}.]{\label{fig:33p3p-firstCorona}  The first two coronas of a tessellation with valence sequence $[3,3,p,3,p]$ rooted at a $p$-valent vertex. Each shaded region indicates $p-3$ faces in $F_2$ all having the same face type.}
\end{figure}%%%%%%%%%%%%%%%%%%%%%%%%%%%%%%%%

%% FIGURE %%%%%%%%%%%%%%%%%%%%%%%%%%%%%%%%%%%%%%%%%
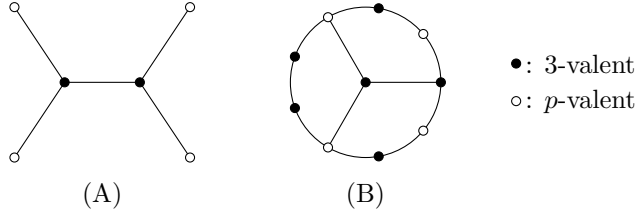
\begin{figure}[htbp]
\[
\begin{tikzpicture}
[
    scale = 1,
    bvert/.pic = {\draw [fill=black] (0,0) circle [radius=0.06];},
    wvert/.pic = {\draw [fill=white] (0,0) circle [radius=0.06];},
    afig/.pic = {
        \draw (0,0) -- (2/3,1) -- (0,2) (2/3,1) -- (5/3,1) (7/3,0) -- (5/3,1) -- (7/3,2);
        \draw (0,0) pic {wvert} (0,2) pic {wvert} (7/3,0) pic {wvert} (7/3,2) pic {wvert}
            (2/3,1) pic {bvert} (5/3,1) pic {bvert};},
    bfig/.pic = {
        \draw (0,0) circle [radius=1] pic {bvert};
        \foreach \a in {0,120,240}{\draw (0,0) -- (\a:1);}
        \foreach \a in {0,80,160,200,280}{\draw (\a:1) pic {bvert};}
        \foreach \a in {40,120,240,320}{\draw (\a:1) pic {wvert};}
        }
]
\node at (0,0) {(A)};
\draw (-7/6,.5) pic {afig};
\node at (3.5,0) {(B)};
\draw (3.5,1.5) pic {bfig};
\draw (5.5,1.75) pic {bvert}; \node [right] at (5.5,1.75) {: $3$-valent};
\draw (5.5,1.25) pic {wvert}; \node [right] at (5.5,1.25) {: $p$-valent};
\end{tikzpicture}\]
\caption[Two local configurations in $\vs{33p3p}$.]{\label{fig:33p3p}(A) Local configuration along an edge with edge-symbol $\esym{3,3}{5,5}$ in a face-homogeneous tessellation with valence sequence $\vs{3,3,p,3,p}$. (B) Local configuration in the same tessellation when rooted at a 3-valent vertex.}
\end{figure}

%% FIGURE %%%%%%%%%%%%%%%%%%%%%%%%%%%%%%%%%%%%%%%%%
\begin{figure}[htbp]
\[\begin{tikzpicture}
[
    bvert/.pic = {\draw [fill=black] (0,0) circle [radius=0.06];},
    wvert/.pic = {\draw [fill=white] (0,0) circle [radius=0.06];},
]
\draw (0,0) circle [radius=1] pic {bvert};
\foreach \a in {0,120,240}{\draw (0,0) -- (\a:1);}
\foreach \a in {-1,1}{
    \draw (40*\a:1) -- (2,\a*.65) pic {bvert} 
        (2,0) -- (2,\a) -- (4,.9*\a) -- (4,0) 
        (2,\a) -- (3,0) 
        (2,\a) pic {wvert} -- (1.5,\a*1.25) 
        (4,.9*\a) pic {bvert} -- (3.5,\a*1.25)
        (4,\a*.5) pic {wvert};
    }
\draw (3,0) pic {bvert} -- (4,0) pic {bvert};
\foreach \a in {0,80,160,200,280}{\draw (\a:1) pic {bvert};}
\foreach \a in {40,120,240,320}{\draw (\a:1) pic {wvert};}
\draw (5,.25) pic {bvert}; \node [right] at (5,0.25) {: $3$-valent};
\draw (5,-.25) pic {wvert}; \node [right] at (5,-.25) {: $p$-valent};
\node [below right] at (0,0) {$v_0$};
\end{tikzpicture}\]
\caption[Non-concentricity of $\vs{3,3,p,3,p}$.]{\label{fig:33p3p-nonconc}Non-concentricity of $\vs{3,3,p,3,p}$ when rooted at a 3-valent vertex $v_0$.}
\end{figure}
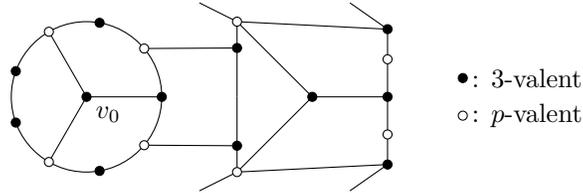

\medskip

%%%%%%%%%%%%%%%%%%%%%%%%%%%%%%%%

\subsection{\label{sec:MainResult}The Main Result}
The following theorem establishes the so-called ``golden mean'' as the least rate of exponential growth for face-homogeneous tessellations with monomorphic valence sequences.

\begin{table}[htbp]%\doublespacing
{\small
\[\begin{array}{c|cr||c|cr}
 \text{Class}          & \sigma & \gamma(T_\sigma)\approx & \text{Class} & \sigma & \gamma(T_\sigma)\approx \\ \hline
 \vs{p,p,p}            & \vs{7,7,7}            & 2.6180           & \vs{p,p,q,r,q}   & \vs{4,4,6,5,6}      &  6.6650 \\
 \vs{3,p,p}            & \vs{3,14,14}          & 2.6180           & \vs{3,p,q,q,p}   & \vs{3,4,6,6,4}      &  4.9911 \\
 \vs{p,p,q}            & \vs{6,6,7}            & 1.722            & \vs{p,q,r,s,t}   & \vs{4,6,10,12,8}    & 14.5753 \\
 \mathbf{\vs{4,p,q}}   & \mathbf{\vs{4,6,14}}  & \mathbf{1.6180}  & \vs{p,p,p,p,p,p} & \vs{4,4,4,4,4,4}    &  5.8284 \\  
 \vs{p,q,r}            & \vs{6,8,10}           & 3.4789           & \vs{p,p,q,p,p,q} & \vs{4,4,5,4,4,5}    &  7.1347 \\  
 \vs{p,p,p,p}          & \vs{5,5,5,5}          & 3.7320           & \vs{p,q,p,q,p,q} & \vs{4,5,4,5,4,5}    &  7.8729 \\  
 \vs{p,p,q,q}          & \vs{4,4,6,6}          & 3.4081           & \vs{p,q,q,p,r,r} & \vs{6,4,4,6,8,8}    & 13.1291 \\  
 \vs{3,p,3,p}          & \vs{3,7,3,7}          & 2.6180           & \vs{p,q,p,r,q,r} & \vs{4,5,4,6,5,6}    &  9.8115 \\  
 \vs{p,q,p,q}          & \vs{4,5,4,5}          & 2.6180           & \vs{p,q,r,p,q,r} & \vs{4,6,8,4,6,8}    & 13.5612 \\  
 \vs{3,p,4,p}          & \vs{3,6,4,6}          & 2.9655           & \vs{p,q,p,r,s,r} & \vs{4,5,4,6,7,6}    & 10.9033 \\  
 \mathbf{\vs{3,p,q,p}} & \mathbf{\vs{3,4,7,4}} & \mathbf{1.6180}  & \vs{p,q,r,p,s,t} & \vs{4,6,8,4,10,12}  & 18.1174 \\  
 \vs{p,q,p,r}          & \vs{4,5,4,6}          & 3.1462           & \vs{p,q,r,s,t,u} & \vs{4,6,10,14,12,8} & 23.9963 \\  
 \vs{p,q,r,s}          & \vs{4,6,10,8}         & 7.0367           & \vs{3,p,p,3,p,p} & \vs{3,4,4,3,4,4}    &  4.3306 \\  
 \vs{p,p,p,p,p}        & \vs{4,4,4,4,4}        & 3.7320           & \vs{3,p,3,p,3,p} & \vs{3,4,3,4,3,4}    &  3.7320 \\  
 \vs{3,3,3,3,p}        & \vs{3,3,3,3,7}        & 1.7553           & \vs{3,3,3,p,q,p} & \vs{3,3,3,4,5,4}    &  4.0265 \\  
 \vs{3,3,3,p,p}        & \vs{3,3,3,6,6}        & 3.0217           & \vs{3,p,q,3,q,p} & \vs{3,4,6,3,6,4}    &  6.8091 \\  
 \vs{3,3,p,3,p}        & \vs{3,3,5,3,5}        & 2.6180           & \vs{3,p,3,q,3,r} & \vs{3,4,3,5,3,6}    &  5.6723 \\  
 \vs{3,3,p,3,q}        & \vs{3,3,4,3,5}        & 1.9318           & \vs{3,p,q,r,q,p} & \vs{3,4,6,5,6,4}    &  8.0601 
\end{array}\]}
\caption[The least growth rate within each monomorphic class of valence sequences.]{\label{fig:LeastGrowthsTable}Table of the least exponential growth rate within each monomorphic class of valence sequences. All rates of growth have been truncated at four decimal places rather than being rounded.}
\end{table}
\begin{thm}[Least Exponential Growth Rate of Monomorphic Valence Sequences]\label{thm:LeastRateofGrowth} The least growth rate of a face-homogeneous tessellation with monomorphic valence sequence $\sigma$ such that $\eta(\sigma)>0$ is $\frac12(1+\sqrt{5})$ and is attained by exactly  the tessellations with valence sequences  $[4,6,14]$ and $[3,4,7,4]$.
\begin{proof} 
With respect to the partial order on valence sequences, if a valence sequence $\sigma$ has length at least $7$, then $[3,3,3,3,3,3,3]\leq \sigma$. A face-homogeneous tessellation $T_0$ with valence sequence $[3,3,3,3,3,3,3]$ is edge-homogeneous with edge-symbol $\langle3,3;7,7\rangle$ and so has growth rate $\gam(T_0)=\frac12(3+\sqrt{5})$ by \Cref{thm:EdgeGrowth}. But then if $\vs{3,3,3,3,3,3,3}<\sig$ and $T$ is a tessellation with monomorphic valence sequence $\sig$, then $\gam(T_0)\leq \gam(T)$, by \Cref{thm:GrowthOrder}. We proceed then by exhaustion: there are only finitely many forms of valence sequences of length at most 6. The Appendix contains an exhaustive classification of realizable valence sequences as monomorphic or polymorphic. For each form of monomorphic valence sequence, the least rate of growth is either determined or bounded below. The minimum growth rate of a minimal representative of each form is listed in \Cref{fig:LeastGrowthsTable}. Of these forms, $\vs{4,6,14}$ and $\vs{3,4,7,4}$ have the least rate of growth, shown to be $\frac12(1+\sqrt{5})$ in the Appendix.
\end{proof}
\end{thm}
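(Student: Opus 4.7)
The plan is to reduce to a finite enumeration over valence sequences of length at most $6$, compute the growth rate of the partial-order minimum representative of each ``form,'' and then verify the claimed tie. First I would dispatch all $\sigma$ of length $k\geq 7$: by \Cref{prop:UnifConc} every tessellation of covalence $\geq 6$ is uniformly concentric, so \Cref{thm:GrowthOrder} applies throughout this range. In the partial order, every such $\sigma$ dominates $[3,3,3,3,3,3,3]$, whose realization $T_0$ is edge-homogeneous with edge-symbol $\langle 3,3;7,7\rangle$; \Cref{thm:EdgeGrowth} gives $\gamma(T_0)=\tfrac12(3+\sqrt{5})$, which already exceeds $\tfrac12(1+\sqrt{5})$. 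Hence no sequence of length $\geq 7$ competes for the minimum.

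The remaining work is a finite case analysis over realizable monomorphic forms of lengths $3$, $4$, $5$, $6$. For each such form the Appendix \cite{appx} confirms realizability and monomorphicity under the stated parity/inequality constraints, identifies the partial-order minimum representative compatible with $\eta(\sigma)>0$, assembles the transition matrix $M$ from the offspring diagrams via the machinery of \Cref{sec:Accretion} (i.e., \Cref{thm:NumOffs}, and \Cref{thm:UAUC-matrix} in the $\mathscr{G}_{4,4}$ cases), and applies the Eigenvalue Theorem \Cref{thm:Eigenvalue} to read off $\gamma(T_\sigma)$ as the spectral radius of $M$. Within each uniformly concentric form the Growth Comparison Theorem \Cref{thm:GrowthOrder} promotes the value at the minimum representative to a lower bound over the whole form. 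For the six monomorphic non-concentric forms of \Cref{thm:UC-classify} this monotonicity is unavailable, and one must perform bespoke offspring and eigenvalue analyses along the lines of \Cref{example:GrowthOf-4pq}, verifying monotonicity in the free parameters directly from the resulting closed-form growth-rate expressions.

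Collating the minima produces the table in \Cref{fig:LeastGrowthsTable}; the two least entries tie at $\tfrac12(1+\sqrt{5})$, namely $\vs{4,6,14}$ and $\vs{3,4,7,4}$. For $\vs{4,6,14}$, specializing \Cref{example:GrowthOf-4pq} to $p=6$, $q=14$ gives
\[
\Lambda \;=\; \tfrac14\sqrt{2\cdot 2\cdot 10-16+2\sqrt{4\cdot 100-16\cdot 20}}
       \;=\; \tfrac14\sqrt{24+8\sqrt{5}},
\]
so $\Lambda^2=\tfrac12(3+\sqrt{5})=\bigl(\tfrac12(1+\sqrt{5})\bigr)^2$ and $\Lambda=\tfrac12(1+\sqrt{5})$. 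The analogous check for $\vs{3,4,7,4}$, which belongs to the monomorphic non-concentric form $\vs{3,p,q,p}$, is supplied in the Appendix. The main obstacle throughout is combinatorial rather than analytic: one must be confident that the Appendix's list of forms is exhaustive, that each tabulated representative is genuinely a partial-order minimum subject to both realizability and $\eta>0$, and that growth-rate monotonicity in the relevant parameters persists in the six exceptional non-uniformly-concentric families where \Cref{thm:GrowthOrder} does not apply. Once those structural pieces are in place, each individual growth-rate computation is a finite eigenvalue problem, and only the two asserted ties at the golden mean require the small direct verification above.
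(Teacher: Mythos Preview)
Your overall skeleton matches the paper's proof exactly: dispatch $k\geq 7$ via \Cref{thm:GrowthOrder} and the edge-homogeneous growth rate of $[3,3,3,3,3,3,3]$, then reduce $k\leq 6$ to the finite table produced by the Appendix. The explicit verification for $[4,6,14]$ via \Cref{example:GrowthOf-4pq} is a nice touch that the paper leaves implicit.

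There is, however, a slip in how you describe the casework for $k\leq 6$. You divide the forms into ``uniformly concentric, where \Cref{thm:GrowthOrder} applies'' versus ``the six non-concentric forms of \Cref{thm:UC-classify}.'' But the hypothesis of \Cref{thm:GrowthOrder} is membership in $\mathscr{G}_{4,4}\cup\mathscr{G}_{3+,5}\cup\mathscr{G}_{3,6}$, which is strictly stronger than uniform concentricity: no covalence-$3$ form lies in any of these classes, nor does $[3,p,q,p]$ (covalence $4$ with a $3$-valent vertex), nor do covalence-$5$ forms with adjacent $3$-valent vertices such as $[3,3,3,3,p]$ and $[3,3,3,p,p]$. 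In particular, $[3,4,7,4]$ does \emph{not} belong to a ``monomorphic non-concentric form'' in the sense of \Cref{thm:UC-classify}; by that theorem $[3,p,q,p]$ is uniformly concentric, yet it still falls outside the hypotheses of \Cref{thm:GrowthOrder}. So the dichotomy you set up is incomplete, and for these intermediate forms the Appendix must (and does) supply direct monotonicity arguments in the parameters rather than invoking \Cref{thm:GrowthOrder}. This does not break your proof strategy---everything is still deferred to the Appendix, as in the paper---but your description of what the Appendix is doing needs to be corrected.
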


\begin{rem}
It is interesting to observe that the two tessellations realizing the minimum exponential  growth rate are closely related. The face-homogeneous tessellation with valence sequence $\vs{4,6,14}$ can be realized by the classical tiling of the hyperbolic plane by triangles with interior angles $\frac{\pi}2$, $\frac{\pi}3$, and $\frac{\pi}7$.  Moreover, a face-homogeneous tessellation with valence sequence $\vs{3,4,7,4}$ is the subgraph of one with valence sequence $\vs{4,6,14}$ obtained by the deletion of all edges joining $6$-valent and $14$-valent vertices.  Many artistic renderings of these tilings exist, and can be found on web sites regarding the $(2,3,7)$-triangle group, the Order-7 triangular tiling, or triangular tilings of the hyperbolic plane, including Wikipedia.
\end{rem}

%%%% Part 3: Polymorphic Valence Sequences
\section{Polymorphic Valence Sequences}
\subsection{Polymorphic Valence Sequences}\label{sec:PolymorphicVS}

With respect to the ordering of cyclic sequences, the least polymorphic valence sequence with positive angle excess is $[4,4,4,5]$; that is to say, every cyclic sequence  $\sigma$ such that ${\sigma<[4,4,4,5]}$ is either not realizable as a tessellation, is realizable only by a finite map or a Euclidean tessellation, or is monomorphic. 
While all valence sequence of length 3 are monomorphic, $k$-covalent polymorphic valence sequences abound for $k\geq 4$. The following theorem gives a simple sufficient condition under which a realizable valence sequence is polymorphic.

%%%%%%%%%%%%%%%%%%%FIGURE %%%%%%%%%%%%
\begin{figure}[hbtp]
\[\newcommand{\hyt}{3}\newcommand{\cs}{1.5}
\begin{tikzpicture}
[
    bvert/.style = {fill=black, radius=2pt},
    vell/.pic = {\foreach \y in {-1,0,1} \draw [fill=black] (0,\y/10) circle [radius=.01];},
]
\draw (0,0) -- (0,\hyt) node [above] {$U_n$};
\draw (\cs,0) -- (\cs,\hyt) node [above] {$U_{n+1}$};
\fill (0,\hyt/4) circle [bvert] node [left] {$u_0$};
\fill (0,\hyt/2) circle [bvert] node [left] {$u_1$};
\fill (0,3*\hyt/4) circle [bvert] node [left] {$u_2$};
\foreach \a in {1,2,3,4,5}{    \fill (\cs, \a*\hyt/6) circle [bvert]; }
\draw (0,\hyt/4) -- (\cs,\hyt/6)
    (0,\hyt/2) -- (\cs,\hyt/3) node [right] {$v_1$}
    (0,\hyt/2) -- (\cs,\hyt/2) node [right] {$v_2$}
    (0,\hyt/2) -- (\cs,2*\hyt/3) node [right] {$v_r$}
    (0,3*\hyt/4) -- (\cs,5*\hyt/6);
\draw (\cs-.15,7*\hyt/12) pic {vell};
\node [left] at (\cs,5*\hyt/12) {$w$};
\path (\cs/2,\hyt/3) node {$b$} (\cs/2,2*\hyt/3) node {$b'$};
\end{tikzpicture}\]
\caption{\label{fig:PolySuff}A configuration of faces demonstrating polymorphicity.}
\end{figure}
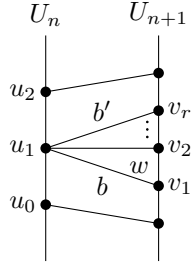
%%%%%%%%%%%%%%%%%%% END FIGURE %%%%%%%%%%%%
\begin{prop}\label{thm:AmbCond} Let $\sigma=[p_0,\ldots,p_{k-1}]$ be the valence sequence of a face-homogeneous tessellation $T\in\mathscr{G}_{4,4}\cup\mathscr{G}_{3+,5}$. If there exist distinct $i,j\in\{0,\ldots,k-1\}$ such that $p_i,p_{i+1}\geq 4$ and either
\begin{enumerate}
\item   $p_i=p_j$, $p_{i+1}=p_{j+1}$, and $p_{i+2}\neq p_{j+2}$, or
\item   $p_i=p_j$, $p_{i+1}=p_{j-1}$, and $p_{i+2}\neq p_{j-2}$,
\end{enumerate}
then $\sigma$ is polymorphic.
\end{prop}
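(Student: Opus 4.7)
The plan is to exhibit two non-isomorphic face-homogeneous tessellations $T_1,T_2$ sharing the valence sequence $\sigma$ by exploiting a local ambiguity that each hypothesis creates during the accretion of a Bilinski diagram. The key observation is that conditions (1) and (2) each produce two cyclic positions of $\sigma$ whose length-$2$ prefixes (or mirrored prefixes) agree but whose third terms differ; this agreement-with-divergence is precisely what yields a binary choice when extending a partial diagram face-by-face.

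First I would fix a face-rooted Bilinski diagram of a realization of $\sigma$ built out through corona $F_n$ for some $n\geq 1$ large enough that the cycle $\langle U_n\rangle$ contains an interior vertex $u_1$ of valence $p_i=p_j\geq 4$ in the configuration of \Cref{fig:PolySuff}: the neighbors $u_0,u_2\in U_n$ lie in $\langle U_n\rangle$, and $v_1,\ldots,v_r\in U_{n+1}$ are the remaining neighbors of $u_1$, with faces $b,b'\in F_{n+1}$ as shown. Since $T\in\mathscr{G}_{4,4}\cup\mathscr{G}_{3+,5}$, \Cref{prop:UnifConc} ensures that every face of $F_{n+1}$ meets $\langle U_n\rangle$ in at most two edges, so the face $b$ incident with $u_0u_1$ and with $u_1v_1$ is well defined, and the accretion proceeds one face at a time without conflict.

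The central step identifies the choice point. Reading the cyclic valence sequence of $b$ from $u_0$ through $u_1$ and $v_1$, we must align $b$'s boundary with some cyclic shift of $\sigma$. Under condition (1), aligning $u_1\leftrightarrow p_i$ forces the vertex of $b$ following $v_1$ to have valence $p_{i+2}$; aligning $u_1\leftrightarrow p_j$ forces it to have valence $p_{j+2}$. Both alignments agree on $u_0,u_1,v_1$ (because $p_i=p_j$ and $p_{i+1}=p_{j+1}$), so both are locally consistent with the partial diagram built through $F_n$; condition (2) is handled by the symmetric argument in the opposite rotational orientation on $b$. I would then construct $T_1$ by making the first choice at this placement of $u_1$ and $T_2$ by making the second, and in each case continue accretion inductively so that every subsequently placed face again has valence sequence $\sigma$. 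The resulting tessellations differ in the valence of a vertex of $b$ lying at bounded regional distance from the root while the rest of the Bilinski diagrams through corona $n$ coincide, whence $T_1\not\cong T_2$.

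The main obstacle is verifying that both local choices extend to complete valid tessellations, not merely to agreeable initial segments; that is, that the divergent choice at $u_1$ does not force contradictions in some future corona. I expect this to follow from the inductive definition of a Bilinski diagram together with the good behavior guaranteed by \Cref{prop:UnifConc}, since once the valences of the newly-placed boundary vertices of $F_{n+1}$ are fixed, each subsequent face can be completed independently according to $\sigma$; nevertheless, the careful bookkeeping of how the initial divergence on $b$ propagates outward into $F_{n+2}$ and beyond is the only delicate part of the argument.
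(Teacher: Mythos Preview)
Your overall strategy coincides with the paper's: exhibit a binary local choice during accretion of a Bilinski diagram and argue that the two completions are non-isomorphic.  The gap is in \emph{where} you locate the choice.

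You place the ambiguity in the brick $b\in F_{n+1}$ incident with $u_0u_1$, claiming that $b$ can be aligned with $\sigma$ at either position $i$ or position $j$ at $u_1$.  But $u_0\in U_n$ already has a fixed valence, and going the other way around $b$ from $u_1$ through $u_0$, the alignment ``$u_1\leftrightarrow p_i$'' forces $\rho(u_0)=p_{i-1}$ while ``$u_1\leftrightarrow p_j$'' forces $\rho(u_0)=p_{j-1}$.  The hypothesis gives $p_i=p_j$ and $p_{i+1}=p_{j+1}$ only; there is no reason for $p_{i-1}=p_{j-1}$, so in general at most one of your two alignments of $b$ is consistent with the already-built corona.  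Your sentence ``both alignments agree on $u_0,u_1,v_1$'' is exactly the unjustified step.

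The paper avoids this by first fixing $\rho(u_0),\rho(u_1),\rho(u_2)=(p_0,p_1,p_2)$ (after normalizing $i=0$) and then tracking the \emph{forced} valences among $v_1,\ldots,v_r$.  Either there is already a subsequence $[q,p_1,p_2]$ with $q\neq p_0$, in which case $\rho(v_r)$ (hence the face $b'$) admits two values; or else $\rho(v_r)=p_0$ is forced, and propagation together with the parity of $p_1$ shows that the first genuinely free choice is $\rho(v_1)$ (or $\rho(v_2)$ when $p_1$ is odd).  In either case the ambiguous face is a \emph{wedge} $w$ incident with $v_1,u_1,v_2$, which has only $u_1$ in $U_n$ and therefore only one pinned valence; that is why two alignments coexist there but not on your brick $b$.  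Redirecting your argument to the wedge $w$ and handling the parity of $p_{i+1}$ would repair the proof.
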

\begin{proof}
As the only two forms of valence sequences of length $k=4$ that satisfy the hypothesis, namely $[p,p,p,q]$ and $[p,p,q,r]$, are polymorphic (see Appendix), we assume that $k\geq 5$. Also, since condition (2) is identical to (1) save for orientation within the cyclic sequence, it suffices to assume that there are distinct $i,j$ such that (1) holds. Furthermore, we may assume $i=0$ due to the rotational equivalence of valence sequences.

Since $k\geq5$, there exists for some $n$ a face in $F_n$ incident with three consecutive vertices $u_0,u_1,u_2\in U_n$ with valence $\rho(u_m)=p_m$ for $m=0,1,2$.  Let $b$ be the brick in $F_{n+1}$ incident with the edge $u_0u_1$, and let $b'$ be the brick (or perhaps notched brick if $p_2=3$) in $F_{n+1}$ incident with the edge $u_1u_2$.  Let $v_1,\ldots,v_r$ be the vertices in $U_{n+1}$ incident with $u_1$ in consecutive order, so that $v_1$ is incident with $b$ and $v_r$ is incident with $b'$. Thus $r=p_1-2\geq2$.  If $\sigma$ contains a subsequence $[q,p_1,p_2]$ with $q\neq p_0$, then $\rho(v_r)$ may equal either $p_0$ or $q$, resulting in a choice of face types for $b'$, and we're done.  Otherwise we must have $\rho(v_r)=p_0$, which forces the vertex $v_{r-2}$ and subsequent alternate neighbors of $u_1$ in $U_{n+1}$ also to be $p_0$-valent.

If $p_1$ is even, then $\rho(v_1)$ may equal either $p_2$ or $p_{j+2}$ in which case the wedge $w\in F_{n+1}$ incident with vertices $v_1,u_1,v_2$ may be of either type $\wft{2}$ or type $\wft{j+2}$, and $T$ is polymorphic. (See \Cref{fig:PolySuff}.)

If $p_1$ is odd, then working backward as in the even case forces $\rho(v_1)=p_0$, which implies that either $p_0=p_2$ or $p_0=p_{j+2}$, and without loss of generality, we assume the former.  Now we may assign $\rho(v_2)$ to be either $p_0$ or $p_{j+2}$, and the argument proceeds as in the even case.
\qquad\end{proof}

The existence of polymorphic valence sequences considerably complicates the computation of growth rates of face-homogeneous tessellations. The above proof suggests that, unlike in the monomorphic case, polymorphic valence sequences may admit many different accretion rules, as we illustrate in the next section.
\medskip

%%%%%%%%%%%%%%%%%%%%%%%%%%%%%%%%%%

\subsection{\label{appx_a}Two non-isomorphic tessellations with the same valence sequence}
The minimal polymorphic valence sequence under the partial order on cyclic sequences, namely $\vs{4,4,4,5}$,  is unfortunately not amenable to study via our methods. In fact, there is no well-defined transition matrix between coronas, and this problem is shared by all valence sequences of the form $\vs{4,4,4,q}$ for $q>4$. However, $\vs{4,4,6,8}$ provides us with the opportunity to investigate two distinct (but related) accretion rules. 

The valence sequence $\vs{4,4,6,8}$ is representative of form $[p,p,q,r]$ discussed in the Appendix.  As every face is incident with a pair of adjacent 4-valent vertices, every realization of this valence sequence contains a countable infinity of pairwise-disjoint double rays, each induced exclusively by $4$-valent vertices.  \Cref{fig:4468-RayConfigs} (A) shows a strip-like patch bordering a double ray of $4$-valent vertices.  To obtain \Cref{fig:4468-RayConfigs} (B) from this (or vice versa), one can fix pointwise the half-plane on one side of the double ray while translating the half-plane on the other side along one edge of the double~ray.

%%%%%%%%%%%%%%%%%%%%%%%%%%%%%%%%%%%%

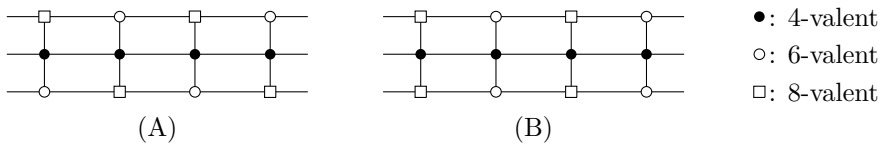
\begin{figure}[htbp]
{\centering\begin{tikzpicture}
[
    bvert/.pic={\fill [fill=black] (0,0) circle [radius=2pt];},
    wvert/.pic={\filldraw [draw=black, fill=white] (0,0) circle [radius=2pt];},
    sqvert/.pic={\filldraw [fill=white, draw=black] (-45:3pt) foreach \a in {45,135,225,315}{ -- (\a:3pt)};},
]
\draw foreach \a in {1,2,3,4}{ (\a,0) -- (\a,1/2) pic {bvert} -- (\a,1) } (0.5,1/2) -- (4.5,1/2);
\draw (0.5,1) foreach \a in {0,1}{-- (1+2*\a,1) pic {sqvert} -- (2+2*\a,1) pic {wvert}} -- (4.5,1);
\draw (0.5,0) foreach \a in {0,1}{-- (1+2*\a,0) pic {wvert} -- (2+2*\a,0) pic {sqvert}} -- (4.5,0);

\draw (5.5,1/2) foreach \a in {6,7,8,9}{ -- (\a,1/2) pic {bvert}} -- (9.5,1/2);
\draw foreach \a in {6,8}{
    (\a-.5,0) -- (\a+.5,0) (\a,0) pic {sqvert} -- (\a,1) pic {sqvert} (\a-.5,1) -- (\a+.5,1)
    (\a+.5,0) -- (\a+1.5,0) (\a+1,0) pic {wvert} --(\a+1,1) pic {wvert} (\a+.5,1) -- (\a+1.5,1)
};

\path (2.5,-.5) node {(A)} (7.5,-.5) node {(B)};
\draw (10.5,1) pic {bvert}; \node   [right] at (10.5,1) {: $4$-valent};
\draw (10.5,1/2) pic {wvert}; \node [right] at (10.5,1/2) {: $6$-valent};
\draw (10.5,0) pic {sqvert}; \node  [right] at (10.5,0) {: $8$-valent};
\end{tikzpicture}}
\caption{\label{fig:4468-RayConfigs}Two non-isomorphic patches of a tessellation with valence sequence $\vs{4,4,6,8}$, local to double rays of 4-valent vertices.}
\end{figure}
%%%%%%%%%%%%%%%%%%%%%%%%%%%%%%%%%%%%

To construct  still other such (non-isomorphic) realizations, one can choose to ``translate'' along any one of these double rays by leaving fixed the half-plane on one side of the double ray but translating the half-plane on the other side one edge.  Since there exists a countable infinity of double rays along which one may choose to translate one or the other or neither of the adjacent half-planes, there exists an uncountable class of pairwise non-isomorphic tessellations that all have the same valence sequence $[4,4,6,8]$.

While one might expect that all tessellations having the same valence sequence always have the same growth rate, we show that this is not so.

We begin by observing that every $4$-valent vertex in a face-homogeneous tessellation with valence sequence $\vs{4,4,6,8}$ is adjacent to two other $4$-valent vertices and two vertices with valences $6$ or $8$; thus any given $4$-valent vertex either has exactly one $6$-valent and one $8$-valent neighbor, has two $6$-valent neighbors, or has two $8$-valent neighbors. Furthermore, every $4$-valent vertex lies on a double ray (two-way infinite path) of $4$-valent vertices; if one vertex along this path has a $6$-valent neighbor and an $8$-valent neighbor, then so does every other vertex along the double ray. This is the behavior demonstrated in \Cref{fig:4468-RayConfigs} (A).

If the local configuration specified in \Cref{fig:4468-RayConfigs} (A) is enforced along every double ray of $4$-valent vertices, then the tessellation obtained is unique; let this tessellation be $T_1$. We can then construct offspring diagrams for $T_1$ as given in \Cref{fig:4468_offdiag_1}. It is interesting to note that $T_1$ is the dual of the Cayley graph of the group with presentation 
\[G_1 = \left\langle a,b,c\mid a^2=b^2=c^2=(aba)^2=(bc)^3=(caba)^4=1\right\rangle.\]

%%%%%%%%%%%%%%%%%%%%%%%%%%%%%%%%%%%%
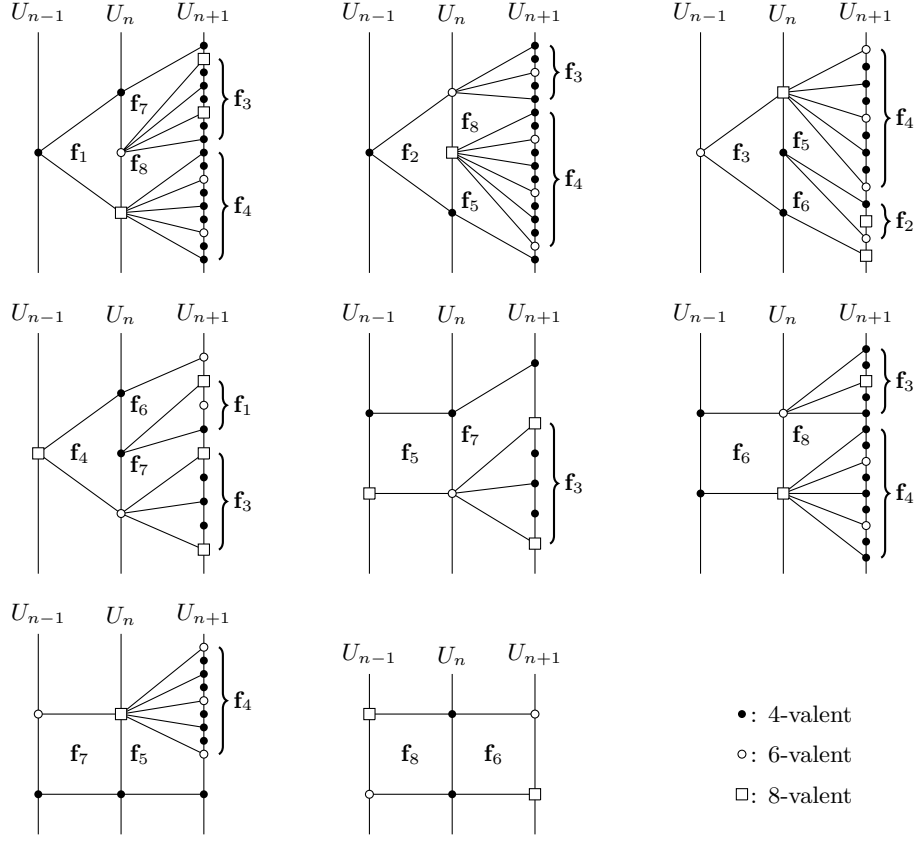
\begin{figure}[htbp]
{\centering\small 
\newcommand{\hy}{3.2}
\newcommand{\cs}{1.1}
\newcommand{\rw}{4}
\begin{tikzpicture}
[
    bvert/.pic={\fill [fill=black] (0,0) circle [radius=1.5pt];},
    wvert/.pic={\filldraw [draw=black, fill=white] (0,0) circle [radius=1.5pt];},
    sqvert/.pic={\filldraw [fill=white, draw=black] (-45:3pt) foreach \a in {45,135,225,315}{ -- (\a:3pt)};},
]
% vertical lines
\foreach \a in {0,1,2}{
    \foreach \b in {1,2}{
        \draw (4*\a*\cs,\b*\rw) -- (4*\a*\cs,\b*\rw+\hy) node [above] {$U_{n-1}$};
        \draw ({(4*\a+1)*\cs},\b*\rw) -- ({(4*\a+1)*\cs},\b*\rw+\hy) node [above] {$U_n$};
        \draw ({(4*\a+2)*\cs},\b*\rw) -- ({(4*\a+2)*\cs},\b*\rw+\hy) node [above] {$U_{n+1}$};
        }
    }
% UL
\draw 
    foreach \a in {1/4,3/4}{(0,2*\rw+\hy/2) -- (\cs,2*\rw+\a*\hy)}
    foreach \a in {1,3,...,9}{(\cs,2*\rw+\hy/4) -- (2*\cs,2*\rw+\a*\hy/18)}
    foreach \a in {10,12,14,16}{(\cs,2*\rw+\hy/2) -- (2*\cs,2*\rw+\a*\hy/18)}
    (\cs,2*\rw+3*\hy/4) -- (2*\cs,2*\rw+17*\hy/18);
\path
    (0,2*\rw+\hy/2) pic {bvert} 
    (\cs,2*\rw+\hy/4) pic {sqvert}
    (\cs,2*\rw+\hy/2) pic {wvert}
    (\cs,2*\rw+3*\hy/4) pic {bvert}
    foreach \a in {1,2,4,5,6,8,9,10,11,13,14,15,17}{ (2*\cs,2*\rw+\a*\hy/18) pic {bvert}}
    foreach \a in {3,7}{ (2*\cs,2*\rw+\a*\hy/18) pic {wvert}}
    foreach \a in {12,16}{ (2*\cs,2*\rw+\a*\hy/18) pic {sqvert}};
% labels
\path 
    (\cs/2,2*\rw+\hy/2) node {$\ft_1$}
    (\cs,2*\rw+5*\hy/8) node [above right] {$\ft_7$}
    (\cs,2*\rw+3*\hy/8) node [above right] {$\ft_8$};
\draw [thick, decorate, decoration={brace, amplitude=3pt}]
    (.2+2*\cs,2*\rw+16*\hy/18) -- (.2+2*\cs,2*\rw+10*\hy/18) node [midway, right=2pt] {$\ft_3$};
\draw [thick, decorate, decoration={brace, amplitude=3pt}]
    (.2+2*\cs,2*\rw+9*\hy/18) -- (.2+2*\cs,2*\rw+\hy/18) node [midway, right=2pt] {$\ft_4$};
% UM
\draw
    foreach \a in {1/4,3/4}{(4*\cs,2*\rw+\hy/2) -- (5*\cs,2*\rw+\a*\hy)}
    (5*\cs,2*\rw+\hy/4) -- (6*\cs,2*\rw+\hy/18)
    foreach \a in {2,4,...,12}{ (5*\cs,2*\rw+\hy/2) -- (6*\cs,2*\rw+\a*\hy/18) }
    foreach \a in {13,15,17}{ (5*\cs,2*\rw+3*\hy/4) -- (6*\cs,2*\rw+\a*\hy/18) };
\path
    (4*\cs,2*\rw+\hy/2) pic {bvert}
    (5*\cs,2*\rw+\hy/4) pic {bvert}
    (5*\cs,2*\rw+\hy/2) pic {sqvert}
    (5*\cs,2*\rw+3*\hy/4) pic {wvert}
    foreach \a in {1,3,4,5,7,8,9,11,12,13,14,16,17}{ (6*\cs,2*\rw+\a*\hy/18) pic {bvert} }
    foreach \a in {2,6,10,15}{ (6*\cs,2*\rw+\a*\hy/18) pic {wvert} };
%labels
\path
    (4.5*\cs,2*\rw+\hy/2) node {$\ft_2$}
    (5*\cs,2*\rw+3*\hy/8) node [below right] {$\ft_5$}
    (5*\cs,2*\rw+5*\hy/8) node [right] {$\ft_8$};
\draw [thick, decorate, decoration={brace, amplitude=3pt}]
    (.2+6*\cs,2*\rw+17*\hy/18) -- (.2+6*\cs,2*\rw+13*\hy/18) node [midway, right=2pt] {$\ft_3$};
\draw [thick, decorate, decoration={brace, amplitude=3pt}]
    (.2+6*\cs,2*\rw+12*\hy/18) -- (.2+6*\cs,2*\rw+2*\hy/18) node [midway, right=2pt] {$\ft_4$};
%UR
\draw
    foreach \a in {1/4,3/4}{(8*\cs,2*\rw+\hy/2) -- (9*\cs,2*\rw+\a*\hy)}
    (9*\cs,2*\rw+\hy/4) -- (10*\cs,2*\rw+\hy/14)
    foreach \a in {2,4}{ (9*\cs,2*\rw+\hy/2) -- (10*\cs,2*\rw+\a*\hy/14) }
    foreach \a in {5,7,...,13}{ (9*\cs,2*\rw+3*\hy/4) -- (10*\cs,2*\rw+\a*\hy/14) };    
\path
    (8*\cs,2*\rw+\hy/2) pic {wvert}
    (9*\cs,2*\rw+\hy/4) pic {bvert}
    (9*\cs,2*\rw+\hy/2) pic {bvert}
    (9*\cs,2*\rw+3*\hy/4) pic {sqvert}
    foreach \a in {4,6,7,8,10,11,12}{ (10*\cs,2*\rw+\a*\hy/14) pic {bvert} }
    foreach \a in {2,5,9,13}{ (10*\cs,2*\rw+\a*\hy/14) pic {wvert} }
    foreach \a in {1,3}{ (10*\cs,2*\rw+\a*\hy/14) pic {sqvert} };
%labels
\path
    (8.5*\cs,2*\rw+\hy/2) node {$\ft_3$}
    (9*\cs,2*\rw+3*\hy/8) node [below right] {$\ft_6$}
    (9*\cs,2*\rw+5*\hy/8) node [below right] {$\ft_5$};
\draw [thick, decorate, decoration={brace, amplitude=3pt}]
    (.2+10*\cs,2*\rw+13*\hy/14) -- (.2+10*\cs,2*\rw+5*\hy/14) node [midway, right=2pt] {$\ft_4$};
\draw [thick, decorate, decoration={brace, amplitude=3pt}]
    (.2+10*\cs,2*\rw+4*\hy/14) -- (.2+10*\cs,2*\rw+2*\hy/14) node [midway, right=2pt] {$\ft_2$};
% ML
\draw
    foreach \a in {1/4,3/4}{(0,\rw+\hy/2) -- (\cs,\rw+\a*\hy)}
    foreach \a in {1,3,5}{ (\cs,\rw+\hy/4) -- (2*\cs,\rw+\a*\hy/10) }
    foreach \a in {6,8}{ (\cs,\rw+\hy/2) -- (2*\cs,\rw+\a*\hy/10) }
    (\cs,\rw+3*\hy/4) -- (2*\cs,\rw+9*\hy/10);
\path 
    (0,\rw+\hy/2) pic {sqvert}
    (\cs,\rw+\hy/4) pic {wvert}
    (\cs,\rw+\hy/2) pic {bvert}
    (\cs,\rw+3*\hy/4) pic {bvert}
    foreach \a in {1,5,8}{ (2*\cs,\rw+\a*\hy/10) pic {sqvert} }
    foreach \a in {2,3,4,6}{ (2*\cs,\rw+\a*\hy/10) pic {bvert} }
    foreach \a in {7,9}{ (2*\cs,\rw+\a*\hy/10) pic {wvert} };
% labels
\path
    (\cs/2,\rw+\hy/2) node {$\ft_4$}
    (\cs,\rw+3*\hy/8) node [above right] {$\ft_7$}
    (\cs,\rw+5*\hy/8) node [above right] {$\ft_6$};
\draw [thick, decorate, decoration={brace, amplitude=3pt}]
    (.2+2*\cs,\rw+8*\hy/10) -- (.2+2*\cs,\rw+6*\hy/10) node [midway, right=2pt] {$\ft_1$};
\draw [thick, decorate, decoration={brace, amplitude=3pt}]
    (.2+2*\cs,\rw+5*\hy/10) -- (.2+2*\cs,\rw+\hy/10) node [midway, right=2pt] {$\ft_3$};
%MM
\draw 
    foreach \a in {1,2}{ (4*\cs,\rw+\a*\hy/3) -- (5*\cs,\rw+\a*\hy/3)}
    foreach \a in {1,3,5}{ (5*\cs, \rw+\hy/3) -- (6*\cs,\rw+\a*\hy/8)}
    (5*\cs, \rw+2*\hy/3) -- (6*\cs,\rw+7*\hy/8);
\path
    (4*\cs,\rw+\hy/3) pic {sqvert}
    (4*\cs,\rw+2*\hy/3) pic {bvert}
    (5*\cs,\rw+\hy/3) pic {wvert}
    (5*\cs,\rw+2*\hy/3) pic {bvert}
    foreach \a in {1,5}{ (6*\cs,\rw+\a*\hy/8) pic {sqvert} }
    foreach \a in {2,3,4,7}{ (6*\cs,\rw+\a*\hy/8) pic {bvert} };
%labels
\path
    (4.5*\cs,\rw+\hy/2) node {$\ft_5$}
    (5*\cs,\rw+\hy/2) node [above right] {$\ft_7$};
\draw [thick, decorate, decoration={brace, amplitude=3pt}]
    (.2+6*\cs,\rw+5*\hy/8) -- (.2+6*\cs,\rw+\hy/8) node [midway, right=2pt] {$\ft_3$};
% MR
\draw 
    foreach \a in {1,2}{ (8*\cs,\rw+\a*\hy/3) -- (9*\cs,\rw+\a*\hy/3)}
    foreach \a in {1,3,...,9}{ (9*\cs,\rw+\hy/3) -- (10*\cs,\rw+\a*\hy/15) }
    foreach \a in {10,12,14}{ (9*\cs,\rw+2*\hy/3) -- (10*\cs,\rw+\a*\hy/15) };
\path
    foreach \a in {1,2}{ (8*\cs,\rw+\a*\hy/3) pic {bvert} }
    (9*\cs,\rw+\hy/3) pic {sqvert}
    (9*\cs,\rw+2*\hy/3) pic {wvert}
    foreach \a in {1,2,4,5,6,8,9,10,11,13,14}{ (10*\cs,\rw+\a*\hy/15) pic {bvert} }
    foreach \a in {3,7}{ (10*\cs,\rw+\a*\hy/15) pic {wvert} }
    (10*\cs,\rw+12*\hy/15) pic {sqvert};
%labels
\path
    (8.5*\cs,\rw+\hy/2) node {$\ft_6$}
    (9*\cs,\rw+\hy/2) node [above right] {$\ft_8$};
\draw [thick, decorate, decoration={brace, amplitude=3pt}]
    (.2+10*\cs,\rw+14*\hy/15) -- (.2+10*\cs,\rw+10*\hy/15) node [midway, right=2pt] {$\ft_3$};
\draw [thick, decorate, decoration={brace, amplitude=3pt}]
    (.2+10*\cs,\rw+9*\hy/15) -- (.2+10*\cs,\rw+\hy/15) node [midway, right=2pt] {$\ft_4$};
%BL
\draw
    (0,\hy/6) -- (0,\hy) node [above] {$U_{n-1}$}
    (\cs,\hy/6) -- (\cs,\hy) node [above] {$U_{n}$}
    (2*\cs,\hy/6) -- (2*\cs,\hy) node [above] {$U_{n+1}$}
    (0,\hy/3) -- (2*\cs,\hy/3)
    (0,2*\hy/3) -- (\cs,2*\hy/3)
    foreach \a in {17,15,13,11,9}{ (\cs,2*\hy/3) -- (2*\cs,\a*\hy/18) };
\path
    foreach \a in {0,1,2}{ (\a*\cs,\hy/3) pic {bvert} }
    (0,2*\hy/3) pic {wvert}
    (\cs,2*\hy/3) pic {sqvert}
    foreach \a in {9,13,17}{ (2*\cs,\a*\hy/18) pic {wvert} }
    foreach \a in {10,11,12,14,15,16}{ (2*\cs,\a*\hy/18) pic {bvert} };
% labels
\path
    (\cs/2,\hy/2) node {$\ft_7$}
    (\cs,\hy/2) node [right] {$\ft_5$};
\draw [thick, decorate, decoration={brace, amplitude=3pt}]
    (.2+2*\cs,17*\hy/18) -- (.2+2*\cs,9*\hy/18) node [midway, right=2pt] {$\ft_4$};
% BM
\draw
    (4*\cs,\hy/6) -- (4*\cs,5*\hy/6) node [above] {$U_{n-1}$}
    (5*\cs,\hy/6) -- (5*\cs,5*\hy/6) node [above] {$U_{n}$}
    (6*\cs,\hy/6) -- (6*\cs,5*\hy/6) node [above] {$U_{n+1}$}
    (4*\cs,\hy/3) -- (6*\cs,\hy/3)
    (4*\cs,2*\hy/3) -- (6*\cs,2*\hy/3);
\path
    (4*\cs,\hy/3) pic {wvert}
    (4*\cs,2*\hy/3) pic {sqvert}
    foreach \a in {1/3,2/3}{ (5*\cs,\a*\hy) pic {bvert} }
    (6*\cs,\hy/3) pic {sqvert}
    (6*\cs,2*\hy/3) pic {wvert};
\node at    (4.5*\cs,\hy/2)  {$\ft_8$};
\node at    (5.5*\cs,\hy/2) {$\ft_6$};
% LEGEND
\draw (8.5*\cs,2*\hy/3) pic {bvert}; \node [right] at (8.5*\cs,2*\hy/3) {: $4$-valent};
\draw (8.5*\cs,\hy/2) pic {wvert}; \node [right] at (8.5*\cs,2*\hy/4) {: $6$-valent};
\draw (8.5*\cs,\hy/3) pic {sqvert}; \node [right] at (8.5*\cs,1*\hy/3) {: $8$-valent};
\end{tikzpicture}}
\caption{\label{fig:4468_offdiag_1}Offspring diagrams for $T_1$}
\end{figure}
%%%%%%%%%%%%%%%%%%%%%%%%%%%%%%%%%%%%

Encoding the offspring diagrams into a matrix, we obtain the transition matrix $M_1$ of $T_1$ given below. The four entries underlined in the matrix are the only entries which change between this example and the next example, $T_2$, that we  construct.
\[M_1=\begin{bmatrix}
 0 & 0 & \underline{0} & \underline{1} & 0 & 0 & 0 & 0 \\
 0 & 0 & \underline{1} & \underline{0} & 0 & 0 & 0 & 0 \\
 3 & 1 & 0 & 1 & 1 & 1 & 0 & 0 \\
 2 & 5 & 2 & 0 & 0 & 2 & 2 & 0 \\
 0 & 1 & 1 & 0 & 0 & 0 & 1 & 0 \\
 0 & 0 & 1 & 1 & 0 & 0 & 0 & 1 \\
 1 & 0 & 0 & 1 & 1 & 0 & 0 & 0 \\
 1 & 1 & 0 & 0 & 0 & 1 & 0 & 0 \\
\end{bmatrix}\]
The characteristic polynomial of $M_1$ is \[f_1(z) = (z-1) (z+1) \left(z^2+3 z+1\right) \left(z^4-3 z^3-4 z^2-3 z+1\right),\] which in turn gives that the eigenvalue of maximum modulus of $M_1$ is \[\lambda_1 = \frac{1}{4} \left(3+\sqrt{33}+2 \sqrt{\frac{13}{2}+\frac{3 \sqrt{33}}{2}}\right) \approx 4.13016.\]

Considering again the double-rays of $4$-valent vertices, it is trivial to note that if a vertex on such a double ray has two $6$-valent neighbors in the tessellation, then both vertices adjacent to it in the double-ray have two $8$-valent neighbors. This local behavior is shown in \Cref{fig:4468-RayConfigs} (B).

If this pattern is extended to all such double rays we  obtain the tessellation $T_2$, which is also the dual of a Cayley graph. The underlying group of this Cayley graph is is 
\[G_2 = \left\langle a,b,c,d\mid a^2=b^2=c^2=d^2=(ab)^2=(ad)^2=(cd)^3=(bc)^4\right\rangle.\]
The growth behavior of $T_2$ differs from that of $T_1$ only in the offspring of faces of types ${\ft_3}$ and ${\ft_4}$, as shown in the offspring diagrams in \Cref{fig:4468_offdiag_2}.

%%%%%%%%%%%%%%%%%%%%%%%%%%%%%%%%%%%%
\begin{figure}[htbp]
{\centering\small 
\newcommand{\hy}{3.2}
\newcommand{\cs}{1.1}
\newcommand{\rw}{4}
\begin{tikzpicture}
[
    bvert/.pic={\fill [fill=black] (0,0) circle [radius=1.5pt];},
    wvert/.pic={\filldraw [draw=black, fill=white] (0,0) circle [radius=1.5pt];},
    sqvert/.pic={\filldraw [fill=white, draw=black] (-45:3pt) foreach \a in {45,135,225,315}{ -- (\a:3pt)};},
]
% vertical lines
\foreach \a in {0,1,2}{
    \foreach \b in {1,2}{
        \draw (4*\a*\cs,\b*\rw) -- (4*\a*\cs,\b*\rw+\hy) node [above] {$U_{n-1}$};
        \draw ({(4*\a+1)*\cs},\b*\rw) -- ({(4*\a+1)*\cs},\b*\rw+\hy) node [above] {$U_n$};
        \draw ({(4*\a+2)*\cs},\b*\rw) -- ({(4*\a+2)*\cs},\b*\rw+\hy) node [above] {$U_{n+1}$};
        }
    }
% UL
\draw 
    foreach \a in {1/4,3/4}{(0,2*\rw+\hy/2) -- (\cs,2*\rw+\a*\hy)}
    foreach \a in {1,3,...,9}{(\cs,2*\rw+\hy/4) -- (2*\cs,2*\rw+\a*\hy/18)}
    foreach \a in {10,12,14,16}{(\cs,2*\rw+\hy/2) -- (2*\cs,2*\rw+\a*\hy/18)}
    (\cs,2*\rw+3*\hy/4) -- (2*\cs,2*\rw+17*\hy/18);
\path
    (0,2*\rw+\hy/2) pic {bvert} 
    (\cs,2*\rw+\hy/4) pic {sqvert}
    (\cs,2*\rw+\hy/2) pic {wvert}
    (\cs,2*\rw+3*\hy/4) pic {bvert}
    foreach \a in {1,2,4,5,6,8,9,10,11,13,14,15,17}{ (2*\cs,2*\rw+\a*\hy/18) pic {bvert}}
    foreach \a in {3,7}{ (2*\cs,2*\rw+\a*\hy/18) pic {wvert}}
    foreach \a in {12,16}{ (2*\cs,2*\rw+\a*\hy/18) pic {sqvert}};
% labels
\path 
    (\cs/2,2*\rw+\hy/2) node {$\ft_1$}
    (\cs,2*\rw+5*\hy/8) node [above right] {$\ft_7$}
    (\cs,2*\rw+3*\hy/8) node [above right] {$\ft_8$};
\draw [thick, decorate, decoration={brace, amplitude=3pt}]
    (.2+2*\cs,2*\rw+16*\hy/18) -- (.2+2*\cs,2*\rw+10*\hy/18) node [midway, right=2pt] {$\ft_3$};
\draw [thick, decorate, decoration={brace, amplitude=3pt}]
    (.2+2*\cs,2*\rw+9*\hy/18) -- (.2+2*\cs,2*\rw+\hy/18) node [midway, right=2pt] {$\ft_4$};
% UM
\draw
    foreach \a in {1/4,3/4}{(4*\cs,2*\rw+\hy/2) -- (5*\cs,2*\rw+\a*\hy)}
    (5*\cs,2*\rw+\hy/4) -- (6*\cs,2*\rw+\hy/18)
    foreach \a in {2,4,...,12}{ (5*\cs,2*\rw+\hy/2) -- (6*\cs,2*\rw+\a*\hy/18) }
    foreach \a in {13,15,17}{ (5*\cs,2*\rw+3*\hy/4) -- (6*\cs,2*\rw+\a*\hy/18) };
\path
    (4*\cs,2*\rw+\hy/2) pic {bvert}
    (5*\cs,2*\rw+\hy/4) pic {bvert}
    (5*\cs,2*\rw+\hy/2) pic {sqvert}
    (5*\cs,2*\rw+3*\hy/4) pic {wvert}
    foreach \a in {1,3,4,5,7,8,9,11,12,13,14,16,17}{ (6*\cs,2*\rw+\a*\hy/18) pic {bvert} }
    foreach \a in {2,6,10,15}{ (6*\cs,2*\rw+\a*\hy/18) pic {wvert} };
%labels
\path
    (4.5*\cs,2*\rw+\hy/2) node {$\ft_2$}
    (5*\cs,2*\rw+3*\hy/8) node [below right] {$\ft_5$}
    (5*\cs,2*\rw+5*\hy/8) node [right] {$\ft_8$};
\draw [thick, decorate, decoration={brace, amplitude=3pt}]
    (.2+6*\cs,2*\rw+17*\hy/18) -- (.2+6*\cs,2*\rw+13*\hy/18) node [midway, right=2pt] {$\ft_3$};
\draw [thick, decorate, decoration={brace, amplitude=3pt}]
    (.2+6*\cs,2*\rw+12*\hy/18) -- (.2+6*\cs,2*\rw+2*\hy/18) node [midway, right=2pt] {$\ft_4$};
%UR
\draw
    foreach \a in {1/4,3/4}{(8*\cs,2*\rw+\hy/2) -- (9*\cs,2*\rw+\a*\hy)}
    (9*\cs,2*\rw+\hy/4) -- (10*\cs,2*\rw+\hy/14)
    foreach \a in {2,4}{ (9*\cs,2*\rw+\hy/2) -- (10*\cs,2*\rw+\a*\hy/14) }
    foreach \a in {5,7,...,13}{ (9*\cs,2*\rw+3*\hy/4) -- (10*\cs,2*\rw+\a*\hy/14) };    
\path
    (8*\cs,2*\rw+\hy/2) pic {wvert}
    (9*\cs,2*\rw+\hy/4) pic {bvert}
    (9*\cs,2*\rw+\hy/2) pic {bvert}
    (9*\cs,2*\rw+3*\hy/4) pic {sqvert}
    foreach \a in {4,6,7,8,10,11,12}{ (10*\cs,2*\rw+\a*\hy/14) pic {bvert} }
    foreach \a in {1,3,5,9,13}{ (10*\cs,2*\rw+\a*\hy/14) pic {wvert} }
    foreach \a in {2}{ (10*\cs,2*\rw+\a*\hy/14) pic {sqvert} };
%labels
\path
    (8.5*\cs,2*\rw+\hy/2) node {$\ft_3$}
    (9*\cs,2*\rw+3*\hy/8) node [below right] {$\ft_6$}
    (9*\cs,2*\rw+5*\hy/8) node [below right] {$\ft_5$};
\draw [thick, decorate, decoration={brace, amplitude=3pt}]
    (.2+10*\cs,2*\rw+13*\hy/14) -- (.2+10*\cs,2*\rw+5*\hy/14) node [midway, right=2pt] {$\ft_4$};
\draw [thick, decorate, decoration={brace, amplitude=3pt}]
    (.2+10*\cs,2*\rw+4*\hy/14) -- (.2+10*\cs,2*\rw+2*\hy/14) node [midway, right=2pt] {$\ft_1$};
% ML
\draw
    foreach \a in {1/4,3/4}{(0,\rw+\hy/2) -- (\cs,\rw+\a*\hy)}
    foreach \a in {1,3,5}{ (\cs,\rw+\hy/4) -- (2*\cs,\rw+\a*\hy/10) }
    foreach \a in {6,8}{ (\cs,\rw+\hy/2) -- (2*\cs,\rw+\a*\hy/10) }
    (\cs,\rw+3*\hy/4) -- (2*\cs,\rw+9*\hy/10);
\path 
    (0,\rw+\hy/2) pic {sqvert}
    (\cs,\rw+\hy/4) pic {wvert}
    (\cs,\rw+\hy/2) pic {bvert}
    (\cs,\rw+3*\hy/4) pic {bvert}
    foreach \a in {1,5,7,9}{ (2*\cs,\rw+\a*\hy/10) pic {sqvert} }
    foreach \a in {2,3,4,6}{ (2*\cs,\rw+\a*\hy/10) pic {bvert} }
    foreach \a in {8}{ (2*\cs,\rw+\a*\hy/10) pic {wvert} };
% labels
\path
    (\cs/2,\rw+\hy/2) node {$\ft_4$}
    (\cs,\rw+3*\hy/8) node [above right] {$\ft_7$}
    (\cs,\rw+5*\hy/8) node [above right] {$\ft_6$};
\draw [thick, decorate, decoration={brace, amplitude=3pt}]
    (.2+2*\cs,\rw+8*\hy/10) -- (.2+2*\cs,\rw+6*\hy/10) node [midway, right=2pt] {$\ft_2$};
\draw [thick, decorate, decoration={brace, amplitude=3pt}]
    (.2+2*\cs,\rw+5*\hy/10) -- (.2+2*\cs,\rw+\hy/10) node [midway, right=2pt] {$\ft_3$};
%MM
\draw 
    foreach \a in {1,2}{ (4*\cs,\rw+\a*\hy/3) -- (5*\cs,\rw+\a*\hy/3)}
    foreach \a in {1,3,5}{ (5*\cs, \rw+\hy/3) -- (6*\cs,\rw+\a*\hy/8)}
    (5*\cs, \rw+2*\hy/3) -- (6*\cs,\rw+7*\hy/8);
\path
    (4*\cs,\rw+\hy/3) pic {sqvert}
    (4*\cs,\rw+2*\hy/3) pic {bvert}
    (5*\cs,\rw+\hy/3) pic {wvert}
    (5*\cs,\rw+2*\hy/3) pic {bvert}
    foreach \a in {1,5}{ (6*\cs,\rw+\a*\hy/8) pic {sqvert} }
    foreach \a in {2,3,4,7}{ (6*\cs,\rw+\a*\hy/8) pic {bvert} };
%labels
\path
    (4.5*\cs,\rw+\hy/2) node {$\ft_5$}
    (5*\cs,\rw+\hy/2) node [above right] {$\ft_7$};
\draw [thick, decorate, decoration={brace, amplitude=3pt}]
    (.2+6*\cs,\rw+5*\hy/8) -- (.2+6*\cs,\rw+\hy/8) node [midway, right=2pt] {$\ft_3$};
% MR
\draw 
    foreach \a in {1,2}{ (8*\cs,\rw+\a*\hy/3) -- (9*\cs,\rw+\a*\hy/3)}
    foreach \a in {1,3,...,9}{ (9*\cs,\rw+\hy/3) -- (10*\cs,\rw+\a*\hy/15) }
    foreach \a in {10,12,14}{ (9*\cs,\rw+2*\hy/3) -- (10*\cs,\rw+\a*\hy/15) };
\path
    foreach \a in {1,2}{ (8*\cs,\rw+\a*\hy/3) pic {bvert} }
    (9*\cs,\rw+\hy/3) pic {sqvert}
    (9*\cs,\rw+2*\hy/3) pic {wvert}
    foreach \a in {1,2,4,5,6,8,9,10,11,13,14}{ (10*\cs,\rw+\a*\hy/15) pic {bvert} }
    foreach \a in {3,7}{ (10*\cs,\rw+\a*\hy/15) pic {wvert} }
    (10*\cs,\rw+12*\hy/15) pic {sqvert};
%labels
\path
    (8.5*\cs,\rw+\hy/2) node {$\ft_6$}
    (9*\cs,\rw+\hy/2) node [above right] {$\ft_8$};
\draw [thick, decorate, decoration={brace, amplitude=3pt}]
    (.2+10*\cs,\rw+14*\hy/15) -- (.2+10*\cs,\rw+10*\hy/15) node [midway, right=2pt] {$\ft_3$};
\draw [thick, decorate, decoration={brace, amplitude=3pt}]
    (.2+10*\cs,\rw+9*\hy/15) -- (.2+10*\cs,\rw+\hy/15) node [midway, right=2pt] {$\ft_4$};
%BL
\draw
    (0,\hy/6) -- (0,\hy) node [above] {$U_{n-1}$}
    (\cs,\hy/6) -- (\cs,\hy) node [above] {$U_{n}$}
    (2*\cs,\hy/6) -- (2*\cs,\hy) node [above] {$U_{n+1}$}
    (0,\hy/3) -- (2*\cs,\hy/3)
    (0,2*\hy/3) -- (\cs,2*\hy/3)
    foreach \a in {17,15,13,11,9}{ (\cs,2*\hy/3) -- (2*\cs,\a*\hy/18) };
\path
    foreach \a in {0,1,2}{ (\a*\cs,\hy/3) pic {bvert} }
    (0,2*\hy/3) pic {wvert}
    (\cs,2*\hy/3) pic {sqvert}
    foreach \a in {9,13,17}{ (2*\cs,\a*\hy/18) pic {wvert} }
    foreach \a in {10,11,12,14,15,16}{ (2*\cs,\a*\hy/18) pic {bvert} };
% labels
\path
    (\cs/2,\hy/2) node {$\ft_7$}
    (\cs,\hy/2) node [right] {$\ft_5$};
\draw [thick, decorate, decoration={brace, amplitude=3pt}]
    (.2+2*\cs,17*\hy/18) -- (.2+2*\cs,9*\hy/18) node [midway, right=2pt] {$\ft_4$};
% BM
\draw
    (4*\cs,\hy/6) -- (4*\cs,5*\hy/6) node [above] {$U_{n-1}$}
    (5*\cs,\hy/6) -- (5*\cs,5*\hy/6) node [above] {$U_{n}$}
    (6*\cs,\hy/6) -- (6*\cs,5*\hy/6) node [above] {$U_{n+1}$}
    (4*\cs,\hy/3) -- (6*\cs,\hy/3)
    (4*\cs,2*\hy/3) -- (6*\cs,2*\hy/3);
\path
    (4*\cs,\hy/3) pic {wvert}
    (4*\cs,2*\hy/3) pic {sqvert}
    foreach \a in {1/3,2/3}{ (5*\cs,\a*\hy) pic {bvert} }
    (6*\cs,\hy/3) pic {sqvert}
    (6*\cs,2*\hy/3) pic {wvert};
\node at    (4.5*\cs,\hy/2)  {$\ft_8$};
\node at    (5.5*\cs,\hy/2) {$\ft_6$};
% LEGEND
\draw (8.5*\cs,2*\hy/3) pic {bvert}; \node [right] at (8.5*\cs,2*\hy/3) {: $4$-valent};
\draw (8.5*\cs,\hy/2) pic {wvert}; \node [right] at (8.5*\cs,2*\hy/4) {: $6$-valent};
\draw (8.5*\cs,\hy/3) pic {sqvert}; \node [right] at (8.5*\cs,1*\hy/3) {: $8$-valent};
\end{tikzpicture}}

\caption{\label{fig:4468_offdiag_2}Offspring diagrams for $T_2$}
\end{figure}
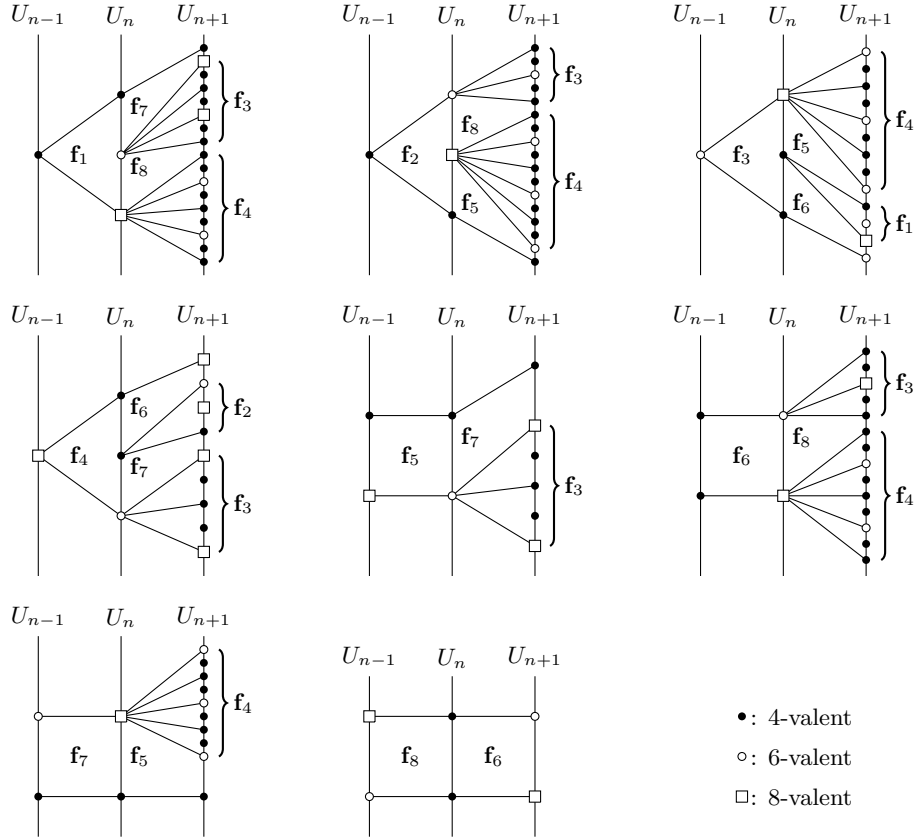
%%%%%%%%%%%%%%%%%%%%%%%%%%%%%%%%%%%%

The effect of the change of offspring of types $\ft_2$ and $\ft_3$ in the transition matrix of $T_2$ lies only in the underlined $2\times 2$ submatrix of $M_1$, while the remainder of the matrix $M_2$ remains identical to $M_1$. Hence we have
 \[M_2=\begin{bmatrix}
 0 & 0 & \underline{1} & \underline{0} & 0 & 0 & 0 & 0 \\
 0 & 0 & \underline{0} & \underline{1} & 0 & 0 & 0 & 0 \\
 3 & 1 & 0 & 1 & 1 & 1 & 0 & 0 \\
 2 & 5 & 2 & 0 & 0 & 2 & 2 & 0 \\
 0 & 1 & 1 & 0 & 0 & 0 & 1 & 0 \\
 0 & 0 & 1 & 1 & 0 & 0 & 0 & 1 \\
 1 & 0 & 0 & 1 & 1 & 0 & 0 & 0 \\
 1 & 1 & 0 & 0 & 0 & 1 & 0 & 0 \\
\end{bmatrix}.\]
The characteristic polynomial of $M_2$ is
\[f_2(z)=(z-1)^2 \left(z^6+2 z^5-15 z^4-40 z^3-15 z^2+2 z+1\right).\] As polynomials of degree 6 are unfortunately not solvable by radicals, we obtain by approximation that the root of maximum modulus is $\lambda_2\approx 4.14659$. 

As these growth rates are nearly the same, there is only a small difference in corona sizes in the first several coronas. However, the size of the coronas and distribution of face types differs greatly farther from the root. To demonstrate this, \Cref{tab:4468-corsizes} gives corona sizes in Bilinski diagrams of $T_1$ and of $T_2$, both rooted at $4$-valent vertices. Note that the sizes of the coronas of $T_2$ dominate those of $T_1$ only after the $13^\text{th}$ corona.
\begin{table}[hbt]
{\small\[
\begin{array}{c|cc||c|cc}
 n & \abs{F_{1,n}} & \abs{F_{2,n}} & n & \abs{F_{1,n}} & \abs{F_{2,n}} \\ \hline
 1  & 4                     & 4                     & 29  & 1.20050\times 10^{18}  & 1.27748\times 10^{18} \\
 2  & 30                    & 28                    & 30  & 4.95826\times 10^{18}  & 5.29701\times 10^{18} \\
 3  & 110                   & 108                   & 31  & 2.04784\times 10^{19}  & 2.19652\times 10^{19} \\
 4  & 494                   & 468                   & 32  & 8.45791\times 10^{19}  & 9.10786\times 10^{19} \\
 5  & 1938                  & 1900                  & 33  & 3.49325\times 10^{20}  & 3.77673\times 10^{20} \\
 6  & 8272                  & 7956                  & 34  & 1.44277\times 10^{21}  & 1.56603\times 10^{21} \\
 7  & 33464                 & 32868                 & 35  & 5.95887\times 10^{21}  & 6.49377\times 10^{21} \\
 8  & 140046                & 136380                & 36  & 2.46111\times 10^{22}  & 2.69268\times 10^{22} \\
 9  & 573610                & 565956                & 37  & 1.01648\times 10^{23}  & 1.11655\times 10^{23} \\
 10 & 2.38167\times 10^6    & 2.34358\times 10^6    & 38  & 4.19821\times 10^{23}  & 4.62986\times 10^{23} \\
 11 & 9.80378\times 10^6    & 9.73259\times 10^6    & 39  & 1.73393\times 10^{24}  & 1.91983\times 10^{24} \\
 12 & 4.05773\times 10^7    & 4.02988\times 10^7    & 40  & 7.16140\times 10^{24}  & 7.96071\times 10^{24} \\
 13 & 1.67365\times 10^8    & 1.67318\times 10^8    & 41  & 2.95777\times 10^{25}  & 3.30099\times 10^{25} \\
 14 & \mathbf{6.91836\times 10^8}    & \mathbf{6.93034\times 10^8}    & 42  & 1.22161\times 10^{26}  & 1.36878\times 10^{26} \\
 15 & 2.85585\times 10^9    & 2.87639\times 10^9    & 43  & 5.04544\times 10^{26}  & 5.67580\times 10^{26} \\
 16 & 1.17992\times 10^{10} & 1.19181\times 10^{10} & 44  & 2.08385\times 10^{27}  & 2.35352\times 10^{27} \\
 17 & 4.87218\times 10^{10} & 4.94504\times 10^{10} & 45  & 8.60662\times 10^{27}  & 9.75910\times 10^{27} \\
 18 & 2.01257\times 10^{11} & 2.04947\times 10^{11} & 46  & 3.55467\times 10^{28}  & 4.04670\times 10^{28} \\
 19 & 8.31149\times 10^{11} & 8.50179\times 10^{11} & 47  & 1.46814\times 10^{29}  & 1.67800\times 10^{29} \\
 20 & 3.43297\times 10^{12} & 3.52419\times 10^{12} & 48  & 6.06363\times 10^{29}  & 6.95799\times 10^{29} \\
 21 & 1.41782\times 10^{13} & 1.46172\times 10^{13} & 49  & 2.50438\times 10^{30}  & 2.88520\times 10^{30} \\
 22 & 5.85596\times 10^{13} & 6.05990\times 10^{13} & 50  & 1.03435\times 10^{31}  & 1.19637\times 10^{31} \\
 23 & 2.41857\times 10^{14} & 2.51322\times 10^{14} & 60  & 1.49395\times 10^{37}  & 1.79797\times 10^{37} \\
 24 & 9.98918\times 10^{14} & 1.04199\times 10^{15} & 70  & 2.15777\times 10^{43}  & 2.70207\times 10^{43} \\
 25 & 4.12567\times 10^{15} & 4.32117\times 10^{15} & 80  & 3.11654\times 10^{49}  & 4.06079\times 10^{49} \\
 26 & 1.70397\times 10^{16} & 1.79166\times 10^{16} & 90  & 4.50134\times 10^{55}  & 6.10274\times 10^{55} \\
 27 & 7.03766\times 10^{16} & 7.42979\times 10^{16} & 100 & 6.50145\times 10^{61}  & 9.17148\times 10^{61} \\
 28 & 2.90667\times 10^{17} & 3.08066\times 10^{17} & 200 & 2.56861\times 10^{123} & 5.38996\times 10^{123} \\
\end{array}
\]}
\caption[Corona sizes in $T_1$ and $T_2$ both with $\sig=\vs{4,4,6,8}$]{\label{tab:4468-corsizes}Corona sizes in $T_1$ and $T_2$; emphasis on the $14^\text{th}$ corona beyond which the coronas of $T_2$ appear to exceed in size those of $T_1$.}
\end{table}

%%%%%%%%%%%%%%%%%%%%%%%%%%%%%%%%%%%%%%%%%%%%%%%%%%%%%%
%%%%%%%%%%%%%%%%%%%%%%%%%%%%%%%%%%%%%%%%%%%%%%%%%%%%%%

\subsection{\label{sec:Conjectures}Some conjectures}
Ideally, all tessellations realizing the same polymorphic valence sequence would have the same growth rate. The example of valence sequence $[4,4,6,8]$ illustrates that this is not so.  We propose the following definitions.

\begin{defn} Let $\sigma$ be some polymorphic valence sequence, and define $\mathscr{T}_\sigma$ to be the set of isomorphism classes of face-homogeneous tessellations with valence sequence $\sigma$. 
Let 
\begin{align}
\underline{\lambda}_\sigma & =\inf\{\gamma(T) : T\in \mathscr{T}_\sigma \}, \\
\overline{\lambda}_\sigma &= \sup\{\gamma(T) : T\in \mathscr{T}_\sigma \}, \\
\mathscr{L}_\sigma &= \{T:T\in \mathscr{T}_\sigma\text{ and }
    \gamma(T)=\underline{\lambda}_\sigma\},\text{ and } \\
\mathscr{H}_\sigma &= \{T:T\in \mathscr{T}_\sigma\text{ and }
    \gamma(T)=\overline{\lambda}_\sigma\}
\end{align}
\end{defn}

We conjecture that the lower and upper bounds $\underline{\lambda}_\sigma$ and $\overline{\lambda}_\sigma$ for any given valence sequence $\sigma$ are realized.

\begin{conj}\label{thm:ExtremaExist} Let $\sigma$ be a polymorphic valence sequence. Then $\mathscr{L}_\sigma$ and $\mathscr{H}_\sigma$ are nonempty.

\end{conj}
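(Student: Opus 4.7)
The plan is to realize $\mathscr{T}_\sigma$ as a compact metrizable space and then to reduce the attainment of $\underline{\lambda}_\sigma$ and $\overline{\lambda}_\sigma$ to spectral properties of a finite family of nonnegative matrices, obtained by a symbolic encoding of the polymorphic accretion process.

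First, I topologize $\mathscr{T}_\sigma$ via rooted local convergence: fix a convention for rooting (say, at a face), and declare two rooted tessellations $2^{-r}$-close whenever their combinatorial $r$-balls about the root are isomorphic as rooted plane maps. Since every $T\in\mathscr{T}_\sigma$ is $k$-covalent with vertex valences drawn from the finite set $\{p_0,\ldots,p_{k-1}\}$, only finitely many isomorphism types of rooted $r$-balls occur, and a standard K\"onig-style argument (\emph{cf.}\ the Benjamini--Schramm framework) shows that $\mathscr{T}_\sigma$ is sequentially compact in this metric. Moreover, as in the proof of \Cref{thm:InvariantGrowth}, the corona size $\abs{F_n(T)}$ depends only on an $O(n)$-neighborhood of the root, so for each fixed $n$ the map $T\mapsto\abs{F_n(T)}^{1/n}$ is locally constant, hence continuous, on $\mathscr{T}_\sigma$.

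Second, I encode each $T\in\mathscr{T}_\sigma$ as a configuration in a subshift $\Sigma_\sigma$ over a finite alphabet, where each symbol records the local combinatorial choice made at a polymorphic branch point during the Bilinski accretion of $T$. The example $\vs{4,4,6,8}$ treated in \Cref{appx_a} is the model case, with a binary alphabet coding the two local configurations along each double ray of $4$-valent vertices. Under such an encoding, the vector of corona-type frequencies evolves by products of transition matrices $M_{x_1}M_{x_2}\cdots M_{x_n}$ drawn from a finite family $\{M_1,\ldots,M_s\}$ indexed by the alphabet, so by \Cref{thm:Eigenvalue} the growth rate $\gamma(T)$ is precisely the top Lyapunov exponent of this product. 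Attainment of $\overline{\lambda}_\sigma$ and $\underline{\lambda}_\sigma$ then reduces to attainment of, respectively, the joint and lower spectral radii of $\{M_1,\ldots,M_s\}$ over the compact configuration space $\Sigma_\sigma$. These maxima/minima can be realized via the variational principle: the extremal Lyapunov exponent is attained by some extremal shift-invariant ergodic Borel probability measure (existence by weak-$*$ compactness of the invariant-measure simplex), and by the Oseledets theorem that exponent is realized almost surely by a specific configuration $x^*\in\Sigma_\sigma$, whose associated tessellation $T_{x^*}$ lies in $\mathscr{H}_\sigma$ (respectively $\mathscr{L}_\sigma$).

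The main obstacle is establishing the symbolic encoding for \emph{every} polymorphic valence sequence with $\eta(\sigma)>0$. The opening paragraph of \Cref{appx_a} already flags the obstruction: for $\vs{4,4,4,q}$ with $q>4$ there is no well-defined transition matrix between successive coronas, reflecting accretion behavior that is fundamentally non-local and that apparently defies the finite-alphabet reduction at the corona-by-corona level. A structural classification of polymorphic accretion is therefore a prerequisite, and it may be necessary to group accretion into sufficiently large blocks so that block-to-block dynamics fall back into a finite-alphabet framework; the existing theory for $\bigmonouc$ from \Cref{sec:MonoUC} suggests the right template but does not cover the pathological families. Once this combinatorial reduction is secured, the compactness and spectral-radius machinery of the preceding paragraphs delivers \Cref{thm:ExtremaExist} uniformly in $\sigma$; it is precisely this combinatorial classification, rather than the subsequent dynamical-systems step, where I expect the essential difficulty to lie, and it explains why the authors leave the statement as a conjecture.
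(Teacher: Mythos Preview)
The statement you are addressing is labeled \emph{Conjecture} in the paper, and the paper offers no proof of it; there is therefore nothing to compare your argument against. What you have written is, as your final paragraph candidly admits, a research programme rather than a proof: you identify the symbolic-dynamics route, you flag the $[4,4,4,q]$ obstruction from \Cref{appx_a} as the main missing ingredient, and you correctly observe that this unresolved combinatorial classification is why the authors state the result as a conjecture.

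That said, you underestimate the difficulty of the ``subsequent dynamical-systems step,'' which you describe as routine once the encoding is in hand. For $\overline{\lambda}_\sigma$ your sketch is plausible: the top Lyapunov exponent is upper semicontinuous on the simplex of shift-invariant measures, so a maximizing ergodic measure exists, and Oseledets then furnishes a configuration realizing it. For $\underline{\lambda}_\sigma$ the situation is genuinely delicate. The Lyapunov exponent is \emph{not} in general lower semicontinuous in the measure, so the infimum over ergodic measures need not be attained; and even when it is, identifying $\underline{\lambda}_\sigma$ with that infimum requires an argument you have not supplied. Attainment of the lower spectral radius by an explicit infinite trajectory is a subtle question in its own right, not a corollary of weak-$*$ compactness. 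You should also justify the identification of $\gamma(T)$ with a matrix Lyapunov exponent in the polymorphic setting: \Cref{thm:Eigenvalue} is proved for a \emph{single} transition matrix $M$, and extending it to nonautonomous products $M_{x_1}\cdots M_{x_n}$ acting on a fixed initial vector $\vec{v}_1$ requires care about the dependence on $\vec{v}_1$ and about whether the symbolic choices really act corona-by-corona.

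Finally, your compactness claim for $\mathscr{T}_\sigma$ needs a word about closure: a local limit of tessellations in $\mathscr{T}_\sigma$ is certainly face-homogeneous with valence sequence $\sigma$ (a local condition), but you should check that the limit object remains a tessellation in the sense of the paper, in particular that $3$-connectedness and $1$-endedness survive the limit.
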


Bearing in mind the polymorphic valence sequence $[4,4,6,8]$ analyzed in \Cref{appx_a}, we propose as a conjecture the following sharper version of \Cref{thm:GrowthOrder}.

\begin{conj}\label{conj:StrongSkip} Let $\sigma_1$ and $\sigma_2$ be valence sequences such that $\sigma_1<\sigma_2$. Then 
\begin{equation}
\overline{\lambda}_{\sigma_1}\leq\underline{\lambda}_{\sigma_2}.
\end{equation}
\end{conj}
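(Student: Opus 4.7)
The plan is to extend the corona-by-corona construction used in the proof of the Growth Comparison Theorem (Theorem 2.6) to polymorphic valence sequences. It suffices to show that for \emph{every} $T_1 \in \mathscr{T}_{\sigma_1}$ and \emph{every} $T_2 \in \mathscr{T}_{\sigma_2}$, one has $\gamma(T_1) \leq \gamma(T_2)$; taking the supremum over $T_1$ and the infimum over $T_2$ then yields $\overline{\lambda}_{\sigma_1} \leq \underline{\lambda}_{\sigma_2}$. So I would fix arbitrary realizations $T_1$ and $T_2$, both labeled as Bilinski diagrams rooted at faces, and attempt to construct an intermediate sequence $\{T'_j : j \in \mathbb{N}\}$ with $T'_0 = T_1$, satisfying (i) the first $j$ coronas of $T'_j$ are isomorphic to those of $T_2$, and (ii) $|F_{1,n}| \leq |F'_{j,n}|$ for every $n$. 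Once such a sequence is produced, the Comparison Lemma (Lemma 2.4) delivers $\gamma(T_1) \leq \gamma(T_2)$ immediately, because for every fixed $n$ one has $|F'_{j,n}| = |F_{2,n}|$ as soon as $j > n$.

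The inductive step from $T'_{j-1}$ to $T'_j$ mirrors the cone-insertion procedure in the proof of Theorem 2.6. Where the $j^{\text{th}}$ corona of $T'_{j-1}$ fails to match the corresponding corona of $T_2$, either some vertex of $T'_{j-1}$ must have its valence augmented to a larger target value prescribed by $T_2$, or a new vertex must be interpolated along an edge. Each such local modification is achieved by splicing in an infinite cone of faces, chosen to agree with the local structure of $T_2$ in that region. The crucial advantage over the monomorphic case is that we no longer need the valence sequence to dictate the cone uniquely: it is dictated instead by the fixed target tessellation $T_2$, which supplies a consistent, globally coherent prescription. Because every cone strictly increases face counts in all sufficiently outward coronas, the domination $|F_{1,n}| \leq |F'_{j,n}|$ is preserved, and by construction $T'_j$ matches $T_2$ on its first $j$ coronas.

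The main obstacle is verifying that this splicing can be carried out compatibly with \emph{both} the already-built interior (matching $T_2$ through corona $j$) and the exterior (still looking like $T_1$ further out). Two subtleties will need careful treatment. First, in the polymorphic setting, an insertion may encounter an edge both of whose endpoints already belong to a corona that has been committed to a specific $T_2$-configuration; as in Figure~\ref{fig:46645-insert}, one must enlarge the single interpolated vertex into a short path to avoid creating a non-concentric obstruction. The polymorphic analogue requires showing that every such local conflict admits a resolution by a bounded local modification, and that these modifications do not accumulate in a way that reduces corona sizes. Second, one must control the ``transition faces'' whose valence sequences are neither $\sigma_1$ nor $\sigma_2$; these arise only along the interface between the spliced region and the untouched part of $T'_{j-1}$ and, being bounded in number per corona by the discrepancy between $T_1$ and $T_2$ at that corona, should not disrupt the desired inequality.

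A further difficulty, and the reason this statement is offered as a conjecture rather than a theorem, is that certain polymorphic valence sequences (for instance $[4,4,4,q]$) fail to admit any well-defined corona-level transition rule, so there is no canonical corona-by-corona target to aim at. For such sequences a direct splicing proof may not be available, and one would likely need a separate argument, perhaps via a generating-function comparison or via analysis of the dual Cayley graphs as in the $[4,4,6,8]$ example of Section~\ref{appx_a}. Establishing the conjecture in full generality therefore seems to demand both the splicing construction outlined above for the ``tame'' polymorphic cases and an independent technique to close the gap for the remaining pathological classes.
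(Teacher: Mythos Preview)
The statement you are attempting to prove is \Cref{conj:StrongSkip}, which the paper explicitly presents as an open \emph{conjecture}; there is no proof in the paper to compare your proposal against. What you have written is not a proof but a strategy sketch, and to your credit you recognize this in your final paragraph.

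The genuine gap is precisely the one you flag but do not close. The cone-insertion procedure underlying \Cref{thm:GrowthOrder} depends essentially on monomorphicity: when $\sigma_2$ is monomorphic, the inserted cone is \emph{uniquely determined}, which is what guarantees that the splice is well-defined and that the interface between the modified and unmodified regions carries only the controlled ``transition faces'' you describe. In the polymorphic setting this uniqueness fails. Your suggestion to let the target tessellation $T_2$ dictate the cone does not resolve the issue, because the region of $T'_{j-1}$ outside the $j^{\text{th}}$ corona still carries the structure of $T_1$ (with valence sequence $\sigma_1$), and there is no reason the $T_2$-prescribed cone can be grafted onto that exterior while remaining a legitimate plane tessellation and while only \emph{increasing} corona counts. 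The ``bounded local modification'' you invoke to repair conflicts is exactly what is unproved; indeed, the example of $[4,4,4,q]$ you cite shows that local choices can propagate globally in ways that defeat any corona-by-corona recursion. Until such a compatibility lemma is established, the argument remains a heuristic, which is why the authors left the statement as a conjecture.
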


In the spirit of the famous quote of the late George P\'{o}lya \cite{Pol} (``If you can't solve a problem, then there is an easier problem you can solve: find it.''), we offer the following (perhaps) easier conjecture. 
\begin{conj} Let $\sigma_1$ and $\sigma_2$ be valence sequences with  $\sigma_1<\sigma_2$. Then 
\begin{equation}
\underline{\lambda}_{\sigma_1}\leq \underline{\lambda}_{\sigma_2}.
\end{equation}
\end{conj}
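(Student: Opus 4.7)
The plan is to mirror, in the polymorphic setting, the constructive strategy used for \Cref{thm:GrowthOrder}, but with the roles reversed: rather than starting from a small valence sequence and \emph{inserting} cones, I would start from any realization of the larger valence sequence and \emph{extract} a realization of the smaller one whose coronas are no larger.

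First I would reduce to the case where $\sigma_2$ covers $\sigma_1$ in the partial order $\leq$ of \Cref{sec:FaceHom}. Inspection of the definition shows there are only two covering types: (a) $\sigma_1$ and $\sigma_2$ have the same length and differ in exactly one coordinate by a single ``admissible'' unit (where ``admissible'' means respecting the parity conditions needed for realizability, as in \Cref{thm:UC-classify}), or (b) $\sigma_2$ is obtained from $\sigma_1$ by the insertion of one additional term. Transitivity of $\leq$ together with the conjectured inequality for covers yields the general statement. Next, given $\varepsilon>0$, I would select $T_2\in\mathscr{T}_{\sigma_2}$ with $\gamma(T_2)<\underline{\lambda}_{\sigma_2}+\varepsilon$, label $T_2$ as a Bilinski diagram, and perform a sequence of local modifications analogous to those in \Cref{fig:46645-insert}, producing intermediate tessellations $T''_0=T_2, T''_1, T''_2,\ldots$ such that the first $j$ coronas of $T''_j$ match those of a fixed target tessellation $T_1$ with valence sequence $\sigma_1$, and such that $\abs{F''_{j,n}}\leq \abs{F_{2,n}}$ for all $n$ and $j$.

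For covering type (a), the modification at each step would decrement a single vertex valence from $q$ to $q-\delta$ (for the relevant $\delta\in\{1,2\}$ dictated by parity) by contracting a matching edge incident with that vertex, then propagating the correction outward along a ray, exactly mirroring the ``cone'' insertion of \Cref{thm:GrowthOrder} but in reverse: each such step \emph{removes} faces from every corona beyond some index. For covering type (b), the analogous modification would identify a vertex whose valence corresponds to the inserted term and suppress it by contracting one of its incident edges, again propagating the correction along a ray. In both cases the total number of faces in each corona can only decrease, so the Comparison Lemma (\Cref{thm:comp}) applied to $T_1$ (obtained as the ``limit'' of the $T''_j$ in the sense that the $n$th corona of $T''_j$ stabilizes for $j\geq n$) gives $\gamma(T_1)\leq \gamma(T_2)$. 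Then $\underline{\lambda}_{\sigma_1}\leq \gamma(T_1)\leq\gamma(T_2)<\underline{\lambda}_{\sigma_2}+\varepsilon$, and letting $\varepsilon\to 0$ finishes the proof for covers.

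The main obstacle is the well-definedness and face-homogeneity of the modified maps at each step. Unlike the monomorphic situation of \Cref{thm:GrowthOrder}, where the tessellation with valence sequence $\sigma_2$ dictates uniquely how the cone must be inserted, in the polymorphic setting the tessellation $T_2$ may exhibit locally incompatible configurations, and contracting an edge or decreasing a valence may create a face whose valence sequence is neither $\sigma_1$ nor $\sigma_2$, forcing a cascade of further corrections along a ray. The delicate point is to verify that this cascade can always be propagated to infinity without creating accumulation points, without violating $3$-connectedness, and without forcing the \emph{total} corona sizes to grow rather than shrink. A secondary difficulty is ensuring that a suitable ``target'' root vertex or face exists in $T_2$ from which to initiate the corrections; this may require a preliminary argument that every face-homogeneous tessellation with valence sequence $\sigma_2$ contains, for each term of $\sigma_2$ absent from $\sigma_1$, a locally modifiable vertex of the corresponding valence whose contraction yields a face pattern compatible with $\sigma_1$. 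Resolving these combinatorial compatibility questions is, I believe, the crux of the conjecture and likely requires a case analysis organized by the six families of monomorphic non-concentric sequences in \Cref{thm:UC-classify} together with the classes admitting the polymorphism criterion of \Cref{thm:AmbCond}.
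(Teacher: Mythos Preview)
The statement you are attempting to prove is presented in the paper as an open \emph{conjecture}; the paper offers no proof and explicitly frames it as a (perhaps) easier variant of the preceding \Cref{conj:StrongSkip}. So there is no ``paper's own proof'' to compare against, and your proposal is an attack on an open problem.

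As a strategy, the reverse-engineering of the \Cref{thm:GrowthOrder} construction is natural, but the proposal is not a proof and you yourself flag the essential gap. Let me name it more sharply. The forward construction in \Cref{thm:GrowthOrder} succeeds because the \emph{target} sequence $\sigma_2$ is monomorphic: this is precisely what guarantees that the inserted cone is well-defined (``This insertion is well-defined precisely because $\sigma_2$ is monomorphic''). In your reverse direction neither $\sigma_1$ nor $\sigma_2$ is assumed monomorphic, so there is no analogous uniqueness to force the contracted configuration; you would have to \emph{choose} a realization of $\sigma_1$ as target, and nothing ensures that the local surgery on $T_2$ is compatible with that choice corona by corona. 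Worse, contracting an edge at a vertex of valence $q$ does not simply delete faces from outer coronas: it merges two faces into one of covalence $2k-2$, which then has the wrong covalence, and repairing that may well \emph{increase} face counts in some coronas rather than decrease them. The monotonicity claim $\abs{F''_{j,n}}\leq\abs{F_{2,n}}$ is therefore unsupported.

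Your reduction to covers is also problematic. A maximal chain from $\sigma_1$ to $\sigma_2$ in the cyclic-sequence poset need not pass through realizable valence sequences at every step (realizability is governed by nontrivial parity and adjacency constraints, cf.\ the remark following \Cref{defn:MonoPoly} and \Cref{thm:UC-classify}), so the inductive step may be vacuous at intermediate stages. Finally, the argument requires that the ``limit'' $T_1$ of the $T''_j$ is itself a face-homogeneous tessellation with valence sequence $\sigma_1$; establishing this demands control over the global structure that the local, ray-by-ray corrections do not obviously provide in the polymorphic setting. These are not technicalities but the heart of why the conjecture remains open.
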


If \Cref{conj:StrongSkip} holds, then one could delete the condition of monomorphicity from the hypothesis of \Cref{thm:GrowthOrder} and therefore from \Cref{thm:LeastRateofGrowth} as well.  Moreover,  the Appendix could be much abbreviated.  For example, one could  eliminate the exhaustive consideration of the many forms of $6$-covalent face-homogeneous tessellations listed and treated there by observing that the least valence sequence $\sigma$ of length $6$ with $\eta(\sigma)>0$ is $[3,3,3,3,3,4]$. Thus, if any tessellation with the polymorphic valence sequence $[3,3,3,3,3,4]$ has growth rate greater than $\frac12(1+\sqrt{5})$, then so does every tessellation with valence sequence $\sigma\geq[3,3,3,3,3,4]$.

Beyond these conjectures, there are some open questions.  Consider the partially ordered set of valence sequences, and in particular, the poset consisting of the polymorphic valence sequences.
\begin{ques}\label{ques:longerIntervals} As one goes up a chain in the poset, do intervals of the form $\left[\lolam_\sig,\hilam_\sig\right]$ become (asymptotically) longer?
\end{ques}
\begin{ques}\label{ques:arbLongerIntervals}Do the intervals in the complement of \[\bigcup_{\sig}\left\{\left[\lolam_\sig,\hilam_\sig\right]:\sig\text{ is polymorphic}\right\}\] become arbitrarily long?
\end{ques}
If the answer to \Cref{ques:arbLongerIntervals} is negative, we pose the following.
\begin{ques}If $x$ is a sufficiently large real number, is there always some polymorphic valence sequence $\sig$ such that $\lolam_\sig\leq x \leq \hilam_\sig$?\end{ques}

\noindent Or, on the other hand,
\begin{ques}Do there exist polymorphic sequences $\sigma,\tau$ such that \[\left[\lolam_\sigma,\hilam_\sigma\right]\cap\left[\lolam_\tau,\hilam_\tau\right]\neq\emptyset?\]
\end{ques}

\bibliographystyle{amcjoucc}

\end{document}